\newtheorem{maintheorem}{Theorem}
\newcommand{\gl}{\mathfrak{gl}}
\newcommand{\FI}{\mathbf{FI}}
\newcommand{\Pol}{\mathbf{Pol}}
\DeclareMathOperator{\maxdeg}{maxdeg}
\title{The spectrum of a twisted commutative algebra}
\author{Andrew Snowden}
\address{Department of Mathematics, University of Michigan, Ann Arbor, MI}
\email{\href{mailto:asnowden@umich.edu}{asnowden@umich.edu}}
\urladdr{\url{http://www-personal.umich.edu/~asnowden/}}
\thanks{AS was supported by NSF DMS-1453893.}
\date{\today}
\begin{document}

\begin{abstract}
A twisted commutative algebra is (for us) a commutative $\bQ$-algebra equipped with an action of the infinite general linear group. In such algebras the ``$\GL$-prime'' ideals assume the duties fulfilled by prime ideals in ordinary commutative algebra, and so it is crucial to understand them. Unfortunately, distinct $\GL$-primes can have the same radical, which obstructs one from studying them geometrically. We show that this problem can be eliminated by working with super vector spaces: doing so provides enough geometry to distinguish $\GL$-primes. This yields an effective method for analyzing $\GL$-primes.
\end{abstract}

\maketitle
\tableofcontents

\section{Introduction}

A \emph{twisted commutative algebra} (tca) is a commutative $\bQ$-algebra equipped with an action of the infinite general linear group $\GL_{\infty}$ under which it forms a polynomial representation; at least, that will be our definition for the moment. TCA's have been effectively used to study asymptotic problems in algebra (see, for example, \cite{fimodule,draisma-lason-leykin,stillman,delta-mod}), and are closely related to many particular objects of interest (such as EFW complexes \cite{efw}, determinantal varieties, and representations of infinite rank groups \cite{infrank}); moreover, all evidence so far points to a rich internal theory. It is therefore sensible to study these objects in more detail. While there have been many successes for particular tca's \cite{fimodule,LiRamos,sym2noeth, periplectic, symc1, symu1, symc1sp}, there has really only been one significant result to date for general tca's, namely, Draisma's topological noetherianity theorem \cite{draisma}. In this paper, we take another step towards understanding the general case: we largely solve the problem of understanding the equivariant prime ideals of tca's.

\subsection{Equivariant commutative algebra}

Let $A$ be a tca. One can then formulate equivariant analogs of many familiar concepts from commutative algebra\footnote{In fact, one can do this for commutative algebras in any tensor category.}:
\begin{itemize}
\item A \emph{$\GL$-ideal} of $A$ is an ideal that is $\GL$-stable.
\item A \emph{$\GL$-prime} is a $\GL$-ideal $\fp$ such that $VW \subset \fp$ implies $V \subset \fp$ or $W \subset \fp$, for subrepresentations $V,W \subset A$. Here $VW$ denotes the image of the map $V \otimes W \to A$.
\item The \emph{$\GL$-radical} of a $\GL$-ideal $I$, denoted $\rad_{\GL}{I}$, is the sum of all subrepresentations $V$ of $A$ such that $V^n \subset I$ for some $n$. Here $V^n$ denotes the image of the map $V^{\otimes n} \to A$. This is equal to the intersection of the $\GL$-primes containing $I$ (Proposition~\ref{prop:rad-int}).
\item The \emph{$\GL$-spectrum} of $A$, denoted $\Spec_{\GL}(A)$, is the set of $\GL$-primes, endowed with the usual Zariski topology. The closed subsets of $\Spec_{\GL}(A)$ correspond bijectively to $\GL$-radical ideals.
\end{itemize}
One can keep going, but this is all we need for the moment.

In ordinary commutative algebra, prime ideals are of central importance; this is no less true of $\GL$-primes in twisted commutative algebra. For instance: the support of an equivariant module is most naturally a subset of the $\GL$-spectrum; the $\GL$-primes can be used to generate the Grothendieck group of equivariant modules; and, under suitable hypotheses, one has an equivariant version of primary decomposition for $\GL$-ideals. Therefore, to understand tca's it is of crucial importance to understand their $\GL$-primes.

Every $\GL$-stable prime ideal of a tca is a $\GL$-prime. However, the converse is not true. The following is an instructive example:

\begin{example} \label{ex:main}
Let $A=\bigoplus_{n \ge 0} \lw^{2n}(\bQ^{\infty})$. Then $A$ is a tca, and a nice one at that: it is finitely generated and noetherian, in the equivariant sense. Suppose that $V$ and $W$ are non-zero subrepresentations of $A$. Since exterior powers are irreducible, it follows that $V$ contains $\lw^i(\bQ^{\infty})$ and $W$ contains $\lw^j(\bQ^{\infty})$ for some $i$ and $j$. Thus $VW$ contains $\lw^{i+j}(\bQ^{\infty})$. We have thus shown that if $V$ and $W$ are non-zero then so is $VW$. It follows that the zero ideal of $A$ is $\GL$-prime; in other words, $A$ is $\GL$-integral.
\end{example}

\noindent
This example is rather shocking when one first encounters it: every positive degree element of $A$ is nilpotent, and yet $A$ is a $\GL$-domain! This example shows that (from our current perspective) tca's do not have enough points to ``see'' their $\GL$-primes: indeed, $\Spec(A)$ is a single point, and thus cannot distinguish the two $\GL$-primes $(0)$ and $A_+$ of $A$. Thus $\GL$-domains appear to be divorced from geometry, which might diminish our hopes of understanding $\GL$-primes; fortunately, however, this appearance is deceiving.

\subsection{The key principle}

The category $\Rep^{\pol}(\GL_{\infty})$ of polynomial representations of $\GL_{\infty}$ is equivalent to the category $\Pol$ of polynomial functors of rational vector spaces; the equivalence is obtained by evaluating a functor on $\bQ^{\infty}$. We can thus view a tca $A$ as an algebra object of $\Pol$. From this perspective, $A(\bQ^{\infty})$ can be seen as the ``incarnation'' of $A$ in the category $\Rep^{\pol}(\GL_{\infty})$. However, polynomial functors can be evaluated on objects in any $\bQ$-linear tensor category. We can thus form the ``super incarnation'' $A(\bQ^{\infty|\infty})$ of $A$ by evaluating on the super vector space $\bQ^{\infty|\infty}$. This is an algebra object of $\Rep^{\pol}(\GL_{\infty|\infty})$. We can now succinctly express the point of this paper:

\vskip\baselineskip
\begin{center}
\begin{minipage}{30em}
\itshape\textbf{Key principle.} The geometry of the super incarnation of a tca is sufficiently rich to detect its $\GL$-primes.
\end{minipage}
\end{center}
\vskip.75\baselineskip

\noindent
This principle is borne out in the theorems stated below.

\begin{example}
Let $A$ be the tca from Example~\ref{ex:main}, regarded in $\Pol$. Then $A(\bQ^{0|s})=\bigoplus_{n \ge 0} \Sym^{2n}(\bQ^s)$ is the second Veronese subring of $\Sym(\bQ^s)$, which is a domain of Krull dimension $s$; in particular, its spectrum has plenty of points. For $r>0$, the algebra $A(\bQ^{r|s})$ is a nilpotent extension of $A(\bC^{0|s})$.
\end{example}

\subsection{Main results}
\label{ss:thms}

We now state several precise theorems. In what follows, $A$ is a tca (considered as a polynomial functor) and $I$ and $J$ are $\GL$-ideals of $A$.

\begin{maintheorem} \label{thm1}
We have $\rad_{\GL}{I} \subset \rad_{\GL}{J}$ if and only if $\rad{I(\bQ^{\infty|\infty})} \subset \rad{J(\bQ^{\infty|\infty})}$.
\end{maintheorem}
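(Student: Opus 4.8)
The plan is to treat the two implications separately: the ``only if'' direction is formal, following from exactness of evaluation, while the ``if'' direction is where the Key principle must enter, and I will reduce it to a statement about free tca's. For the reductions, note that $\rad_{\GL}$ and the ordinary radical are idempotent operations (the former by Proposition~\ref{prop:rad-int}), so $\rad_{\GL}(I)\subseteq\rad_{\GL}(J)$ is equivalent to $I\subseteq\rad_{\GL}(J)$, and $\rad(I(\bQ^{\infty|\infty}))\subseteq\rad(J(\bQ^{\infty|\infty}))$ is equivalent to $I(\bQ^{\infty|\infty})\subseteq\rad(J(\bQ^{\infty|\infty}))$. (Here $A(\bQ^{\infty|\infty})$ is supercommutative, but over $\bQ$ its odd elements are nilpotent, so its radical ideals and spectrum are those of its even part.) Now write $\rad_{\GL}(J)=\sum_\alpha V_\alpha$ with each $V_\alpha\subseteq A$ a subrepresentation satisfying $V_\alpha^{n_\alpha}\subseteq J$ for some $n_\alpha$. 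Evaluation at $\bQ^{\infty|\infty}$ is exact and commutes with tensor products and with sums of subobjects (in polynomial degree $d$ it is $M\mapsto M\otimes_{\bQ[\Sigma_d]}(\bQ^{\infty|\infty})^{\otimes d}$, and $\bQ[\Sigma_d]$ is semisimple), so it carries $V_\alpha^{n_\alpha}\subseteq J$ to a containment of subspaces of $A(\bQ^{\infty|\infty})$; in particular every element of $V_\alpha(\bQ^{\infty|\infty})$ has its $n_\alpha$-th power in $J(\bQ^{\infty|\infty})$, so $\rad_{\GL}(J)(\bQ^{\infty|\infty})=\sum_\alpha V_\alpha(\bQ^{\infty|\infty})\subseteq\rad(J(\bQ^{\infty|\infty}))$. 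Hence $I\subseteq\rad_{\GL}(J)$ gives $I(\bQ^{\infty|\infty})\subseteq\rad(J(\bQ^{\infty|\infty}))$, which is the ``only if'' direction.

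For ``if'' I argue by contraposition. If $\rad_{\GL}(I)\not\subseteq\rad_{\GL}(J)$, then by Proposition~\ref{prop:rad-int} there is a $\GL$-prime $\fp$ with $J\subseteq\fp$ and $I\not\subseteq\fp$. Set $B=A/\fp$, a $\GL$-domain, and $\overline I=(I+\fp)/\fp$, a nonzero $\GL$-ideal of $B$; by exactness $B(\bQ^{\infty|\infty})=A(\bQ^{\infty|\infty})/\fp(\bQ^{\infty|\infty})$ and $\overline I(\bQ^{\infty|\infty})$ is the image of $I(\bQ^{\infty|\infty})$. It suffices to exhibit a non-nilpotent element of $\overline I(\bQ^{\infty|\infty})$: a prime of $B(\bQ^{\infty|\infty})$ avoiding it pulls back to a prime of $A(\bQ^{\infty|\infty})$ containing $J(\bQ^{\infty|\infty})$ but not $I(\bQ^{\infty|\infty})$, which forces $\rad(I(\bQ^{\infty|\infty}))\not\subseteq\rad(J(\bQ^{\infty|\infty}))$. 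Choose an irreducible (hence homogeneous) subobject $M\subseteq\overline I$, possible since $\Pol$ is semisimple, and let $C\subseteq B$ be the graded subalgebra it generates; then $C$ is a $\GL$-domain, $C=\Sym(M)/\fq$ where $\Sym(M)$ is the free tca on $M$ and $\fq$ is a $\GL$-prime with $C_1=M$, and since $B$ is a $\GL$-domain and $M\ne 0$ each power $M^n=C_n$ is nonzero, so $\fq$ does not contain the irrelevant ideal $\Sym(M)_+$. Since $C(\bQ^{\infty|\infty})\hookrightarrow B(\bQ^{\infty|\infty})$ and $C_+(\bQ^{\infty|\infty})\subseteq\overline I(\bQ^{\infty|\infty})$ (as $M\subseteq\overline I$ and $\overline I$ is an ideal), the theorem reduces to the following assertion: \emph{if $\fq$ is a $\GL$-prime of a free tca $\Sym(M)$ not containing $\Sym(M)_+$, then $\rad(\fq(\bQ^{\infty|\infty}))\ne\Sym(M)_+(\bQ^{\infty|\infty})$} --- equivalently, $\Sym(M)(\bQ^{\infty|\infty})/\fq(\bQ^{\infty|\infty})$ is not a nilpotent extension of $\bQ$.

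This reduced statement is the main obstacle: it is the precise form of the Key principle, and it is itself the case of Theorem~\ref{thm1} with $A=\Sym(M)$, $I=\Sym(M)_+$, and $J=\fq$, so the full theorem is equivalent to this special case (free tca generated in a single degree by an irreducible, $I$ the irrelevant ideal, $J$ an arbitrary $\GL$-prime). It cannot be proved by the formal considerations used above: Example~\ref{ex:main} shows the corresponding statement with $\bQ^{\infty}$ in place of $\bQ^{\infty|\infty}$ is false, so the odd variables are essential. My approach would be to develop a dimension theory for super incarnations --- showing that for a $\GL$-domain $B$ the reduced ring $B(\bQ^{\infty|\infty})_{\mathrm{red}}$ is a domain possessing a suitable ``generic point,'' and bounding its Krull dimension below in terms of invariants such as $\maxdeg$ and $\hgt$, so that this dimension is positive whenever $B\ne\bQ$, which is exactly the required non-nilpotency. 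I would expect Draisma's topological noetherianity theorem and the saturation operation $\Sat$ to be the tools that make this work, plausibly via a noetherian induction reducing to a finite list of explicitly analyzable free tca's.
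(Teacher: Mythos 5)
Your formal reductions are sound and run parallel to the paper's easy steps: the ``only if'' direction by exactness of evaluation, and the contrapositive reduction of the ``if'' direction to showing that a nonzero $\GL$-ideal in a $\GL$-domain contains a non-nilpotent element after evaluation on $\bQ^{\infty|\infty}$, and further to the case of a free tca $\Sym(M)$ modulo a $\GL$-prime not containing $\Sym(M)_+$. But at that point the proposal stops: as you yourself observe, the reduced statement is essentially equivalent to the theorem, and your plan for it (a lower bound on the Krull dimension of $B(\bQ^{\infty|\infty})_{\mathrm{red}}$ via Draisma's theorem, saturation, and a noetherian induction) is speculative and not carried out. That reduced statement is exactly the content the paper isolates and proves as Theorem~\ref{thm:nil} (packaged as Corollary~\ref{cor:nil}): if every positive-degree homogeneous element of $A(\bQ^{r|s})$ is nilpotent for all finite $r,s$, then $A_+$ is nilpotent. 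Without an argument for this, Theorem~\ref{thm1} is not proved; the missing step is the heart of the matter, not a technicality.

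It is also worth noting that the paper's route to the missing step is quite different from the one you sketch. Theorem~\ref{thm:nil} is proved by a self-contained induction on the degree of generation, using the shift $F'(V)=F(\bQ\oplus V)$ and its $\bG_m$-action: the positive $\bG_m$-weight generators generate a subalgebra to which the inductive hypothesis applies, forcing it to have finite length; this bounds the $\bG_m$-weights and hence, by Proposition~\ref{prop:width}, the width of $A$ by some $N$; one then evaluates on the single odd space $\bQ^{0|N}$, where nilpotence of a finitely generated algebra forces vanishing in high degrees, and Proposition~\ref{prop:schur-eval} transfers this vanishing back to $A$ itself. No dimension theory or noetherianity enters; indeed Theorem~\ref{thm1} holds with no finiteness hypothesis on $A$, and Draisma's theorem is used only later, for Theorems~\ref{thm3} and~\ref{thm4}. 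Finally, the fact you invoke, that $B(\bQ^{\infty|\infty})_{\mathrm{red}}$ is a domain when $B$ is a $\GL$-domain, is Proposition~\ref{prop:rad-prime}, but by itself it does not exclude the degenerate possibility that this domain is reduced to the base ring (Example~\ref{ex:main} on the non-super side shows how real that danger is) --- excluding it is precisely what Theorem~\ref{thm:nil} accomplishes.
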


In other words, the theorem says that $\rad_{\GL}{I} \subset \rad_{\GL}{J}$ if and only if $\cV(J(\bQ^{\infty|\infty})) \subset \cV(I(\bQ^{\infty|\infty}))$; the latter condition is equivalent to $\cV(J(\bQ^{r|s})) \subset \cV(I(\bQ^{r|s}))$ for all $r$ and $s$ and thus (usually) reduces to a condition about finite dimensional algebraic varieties. Note that since we are only concerned with vanishing loci here, we can pass to the reduced quotient of $A(\bQ^{r|s})$, which is an ordinary (non-super) commutative ring. We emphasize that the theorem is false if one uses only ordinary vector spaces, as Example~\ref{ex:main} shows.

\begin{maintheorem} \label{thm2}
The ideal $\rad_{\GL}{I}$ is $\GL$-prime if and only if the ideal $\rad{I(\bQ^{\infty|\infty})}$ is prime.
\end{maintheorem}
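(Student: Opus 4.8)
The plan is to deduce the statement from Theorem~\ref{thm1} by re-expressing primality, on each side, as meet-irreducibility. Two routine facts set the stage. First, for any tca $B$, a proper $\GL$-radical ideal $\fp$ of $B$ is $\GL$-prime if and only if it is meet-irreducible among $\GL$-radical ideals of $B$ (that is, $\fp = \fa \cap \fb$ with $\fa,\fb$ $\GL$-radical forces $\fp = \fa$ or $\fp = \fb$): if $\fp$ is $\GL$-prime this is immediate from the definition, and conversely, given subrepresentations $V, W$ with $VW \subseteq \fp$, the ideals $\fa = \rad_{\GL}(\fp + (V))$ and $\fb = \rad_{\GL}(\fp + (W))$ (with $(V)$ the ideal generated by $V$) satisfy $\fa\fb \subseteq \fp$ by a short computation with powers of subrepresentations, so $\fp \subseteq \fa \cap \fb \subseteq \rad_{\GL}(\fa\fb) \subseteq \fp$, and meet-irreducibility of $\fp$ then gives $V \subseteq \fp$ or $W \subseteq \fp$. (This uses that intersections of $\GL$-radical ideals are $\GL$-radical, by Proposition~\ref{prop:rad-int}, and the identity $\rad_{\GL}(J_1 J_2) = \rad_{\GL}(J_1) \cap \rad_{\GL}(J_2)$.) Second, in any commutative ring a proper radical ideal is prime if and only if it is meet-irreducible among radical ideals, with no noetherian hypothesis.

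Write $R = A(\bQ^{\infty|\infty})$. Theorem~\ref{thm1} asserts exactly that $\phi \colon \rad_{\GL}(I) \mapsto \rad{I(\bQ^{\infty|\infty})}$ is a well-defined order-embedding from the poset of $\GL$-radical ideals of $A$ to the poset of radical ideals of $R$; and it preserves finite meets, since evaluation at $\bQ^{\infty|\infty}$ is exact and symmetric monoidal, so $(J_1 J_2)(\bQ^{\infty|\infty}) = J_1(\bQ^{\infty|\infty}) \cdot J_2(\bQ^{\infty|\infty})$, and combining this with the two forms of the identity of the previous paragraph yields $\phi(\fa \cap \fb) = \phi(\fa) \cap \phi(\fb)$. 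The direction ``$\rad{I(\bQ^{\infty|\infty})}$ prime $\Rightarrow$ $\rad_{\GL}(I)$ $\GL$-prime'' is now formal: a factorization $\rad_{\GL}(I) = \fa \cap \fb$ by strictly larger $\GL$-radical ideals would map under $\phi$ to a factorization $\rad{I(\bQ^{\infty|\infty})} = \phi(\fa) \cap \phi(\fb)$ by strictly larger radical ideals (the factors are strictly larger because $\phi$ reflects inclusions), contradicting meet-irreducibility of the prime $\rad{I(\bQ^{\infty|\infty})}$.

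The converse is the crux, and it is genuinely not formal, because a meet decomposition of a radical ideal of $R$ need not come from $A$. I would first reduce it to a finite-dimensional statement. Passing to $B = A/\rad_{\GL}(I)$, the claim becomes: if $B$ is a $\GL$-domain, then $\Spec B(\bQ^{\infty|\infty})$ is irreducible. Now $B$, as a polynomial functor, is a direct sum of Schur functors, so each map $B(\bQ^{r|s}) \to B(\bQ^{r'|s'})$ coming from the inclusion $\bQ^{r|s} \hookrightarrow \bQ^{r'|s'}$ is split injective; hence $B(\bQ^{\infty|\infty})_{\mathrm{red}} = \varinjlim_{r,s} B(\bQ^{r|s})_{\mathrm{red}}$ is a filtered colimit along injective maps of rings, and is therefore a domain as soon as every $B(\bQ^{r|s})_{\mathrm{red}}$ is, i.e.\ as soon as $\Spec B(\bQ^{r|s})$ is irreducible for all finite $r,s$. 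So it suffices to prove: a $\GL$-domain $B$ has $\Spec B(\bQ^{r|s})$ irreducible for all $r, s$.

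This is the step I expect to be the main obstacle. I anticipate proving it via the structural analysis of $\GL$-domains that also underlies the proof of Theorem~\ref{thm1} — roughly, that after a suitable localization (or choice of ``generic coordinates'') a $\GL$-domain takes a shape in which the even variables contribute only nilpotents while the odd variables behave like polynomial-ring generators, which forces $\Spec B(\bQ^{r|s})$ to be irreducible. (An alternative is to prove the ``super'' version of the theorem for $R$ itself: exploiting that $\bQ^{a|b} \otimes \bQ^{\infty|\infty} \cong \bQ^{\infty|\infty}$, there is no larger $\bQ$-linear tensor category left to pass to, so the $\GL_{\infty|\infty}$-primes of $R$ should already coincide with its ordinary primes, and one can then descend a $\GL_{\infty|\infty}$-equivariant meet decomposition through $\phi$; but the route above seems more direct.) Once this input is in place, the bookkeeping of the first two paragraphs completes the proof.
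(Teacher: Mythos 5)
Your lattice-theoretic argument for the direction ``$\rad{I(\bQ^{\infty|\infty})}$ prime $\Rightarrow$ $\rad_{\GL}{I}$ $\GL$-prime'' is correct, and it is genuinely different from the paper's route: the paper deduces this implication from the minimal-prime correspondence (Theorem~\ref{thm3}), which requires the finiteness hypothesis $(\ast)$, Draisma's theorem, and a direct-limit argument to remove $(\ast)$, whereas your argument via meet-irreducibility and the identities $\rad_{\GL}(J_1J_2)=\rad_{\GL}(J_1)\cap\rad_{\GL}(J_2)$ and $\rad(\fa\fb)=\rad(\fa)\cap\rad(\fb)$ extracts it formally from Theorem~\ref{thm1} with no finiteness assumption. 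That half is a real simplification.

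The gap is the other direction, which you rightly call the crux and then do not prove. Your reduction to showing that a $\GL$-domain $B$ has $\Spec B(\bQ^{r|s})$ irreducible for all finite $r,s$ is sound (split injectivity of $B(\bQ^{r|s})\to B(\bQ^{r'|s'})$ and passing to reduced rings along the filtered colimit work as you say), but the anticipated ``structural analysis of $\GL$-domains underlying Theorem~\ref{thm1}'' does not exist: the input to Theorem~\ref{thm1} is the nilpotence theorem (Theorem~\ref{thm:nil}), which provides no normal form in which even variables are nilpotent and odd variables are polynomial generators, and nothing of that kind is established in the paper. Your parenthetical alternative rests on a false premise: the $\GL_{\infty|\infty}$-primes of $R=A(\bQ^{\infty|\infty})$ do \emph{not} coincide with its ordinary primes --- for instance the zero ideal of the super incarnation of Example~\ref{ex:main}, or of $\Sym(\bQ^{\infty|\infty})$, is $\GL$-prime but not prime, since the ring has nilpotents; the correct statement is exactly what you need to prove, namely that the \emph{radical} of a $\GL$-prime is prime. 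The paper's actual argument (Proposition~\ref{prop:rad-prime}) is direct: pass to $A/\fp$, take even elements with $xy$ nilpotent, so $x^ky^k=0$; then, by applying operators from $\cU(\gl_{\infty|\infty})$ and exploiting elements of disjoint support (Lemmas~\ref{lem:rad-prime-3} and~\ref{lem:rad-prime-4}), one shows $\langle x^{nk}\rangle\cdot\langle y^k\rangle=0$ for some $n$, and $\GL$-primality then forces $x$ or $y$ to be nilpotent. Some argument of this kind --- which is independent of Theorem~\ref{thm1}, not a consequence of it --- is the missing content of your proposal.
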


Once again, we can verify the latter condition on finite dimensional spaces. The theorem therefore reduces the problem of showing that $\rad_{\GL}(I)$ is $\GL$-prime to showing that the algebraic varieties $\cV(I(\bQ^{r|s}))$ are irreducible.

In the two remaining theorems, we require a finiteness condition: we assume that $A_0$ is noetherian and that $A$ is finitely generated over $A_0$.

\begin{maintheorem} \label{thm3}
We have the following:
\begin{enumerate}
\item $A$ has finitely many minimal $\GL$-primes, say $\fp_1, \ldots, \fp_n$;
\item $A(\bQ^{\infty|\infty})$ has finitely many minimal primes, say $\fq_1, \ldots, \fq_m$;
\item $n=m$, and after applying a permutation we have $\fq_i=\rad{\fp_i(\bQ^{\infty|\infty})}$ for all $i$.
\end{enumerate}
\end{maintheorem}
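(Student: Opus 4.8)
The plan is to combine the correspondence between $\GL$-primes and primes of the super incarnation established by Theorems A and B with the finiteness hypotheses. First I would establish (a): under the assumption that $A_0$ is noetherian and $A$ is finitely generated over $A_0$, the tca $A$ is noetherian in the equivariant sense (this should already be available, or follow from a short argument), and in particular the ascending chain condition on $\GL$-radical ideals holds. Since the closed subsets of $\Spec_{\GL}(A)$ correspond to $\GL$-radical ideals, this gives that $\Spec_{\GL}(A)$ is a noetherian topological space, hence has finitely many irreducible components; the generic points of these components are exactly the minimal $\GL$-primes $\fp_1,\dots,\fp_n$, and $\rad_{\GL}(0) = \fp_1 \cap \cdots \cap \fp_n$.

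For (b), I would argue that the finiteness hypotheses on $A$ pass to the super incarnation: $A(\bQ^{\infty|\infty})$ is a (super) commutative ring which is a filtered union of the finitely generated $\bQ$-algebras $A(\bQ^{r|s})$, and more usefully one shows $A(\bQ^{\infty|\infty})$ is noetherian, or at least that its reduction has finitely many minimal primes. The cleanest route is probably: generators of $A$ over $A_0$ as a tca give, after evaluation, a bounded set of generating degrees, so $A(\bQ^{\infty|\infty})$ is generated over $A_0(\bQ^{\infty|\infty})$ by finitely many polynomial functors evaluated at $\bQ^{\infty|\infty}$; combined with noetherianity of $A_0(\bQ^{\infty|\infty})$ (which itself needs justification — $A_0$ is a plain noetherian ring, but sitting inside a tensor category it still evaluates to something noetherian) one concludes $A(\bQ^{\infty|\infty})$ is noetherian. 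Then it has finitely many minimal primes $\fq_1,\dots,\fq_m$, and $\rad(0) = \fq_1 \cap \cdots \cap \fq_m$ inside $A(\bQ^{\infty|\infty})$.

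For (c), I would run the following dictionary. By Theorem A applied to the zero ideal against itself (and functoriality), the map $\fp \mapsto \rad{\fp(\bQ^{\infty|\infty})}$ is a well-defined, inclusion-preserving map from $\GL$-radical ideals of $A$ to radical ideals of $A(\bQ^{\infty|\infty})$, and it is injective (if $\rad{\fp(\bQ^{\infty|\infty})} \subset \rad{\fp'(\bQ^{\infty|\infty})}$ then $\rad_{\GL}(\fp) \subset \rad_{\GL}(\fp')$, hence the map reflects inclusions and in particular is injective on $\GL$-radical ideals). By Theorem B it carries $\GL$-primes (more precisely $\GL$-radical ideals that are $\GL$-prime) to prime ideals. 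It remains to see that it is a \emph{bijection} between minimal $\GL$-primes and minimal primes, and that it is inclusion-reflecting in the strong sense needed to match up the minimal ones. Here is the argument I expect to work: applying Theorem A with $I = \fp_i$ (a minimal $\GL$-prime) and $J = 0$ shows $\rad{\fp_i(\bQ^{\infty|\infty})} \supset \rad{0(\bQ^{\infty|\infty})} = \rad(0)$ trivially; the content is that the $\fq_i := \rad{\fp_i(\bQ^{\infty|\infty})}$ are pairwise incomparable primes (by injectivity + inclusion-reflection of the map, since the $\fp_i$ are pairwise incomparable) whose intersection has the same radical as $\rad_{\GL}(0)(\bQ^{\infty|\infty}) = (\fp_1 \cap \cdots \cap \fp_n)(\bQ^{\infty|\infty})$. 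One then needs that evaluation at $\bQ^{\infty|\infty}$ is exact enough that $(\fp_1 \cap \cdots \cap \fp_n)(\bQ^{\infty|\infty}) = \fp_1(\bQ^{\infty|\infty}) \cap \cdots \cap \fp_n(\bQ^{\infty|\infty})$ up to radical — this is where a compatibility between the functor-intersection and honest ideal intersection is used, and it is plausibly the technical heart. Granting it, $\rad(0)$ in $A(\bQ^{\infty|\infty})$ equals $\fq_1 \cap \cdots \cap \fq_n$ with the $\fq_i$ prime and pairwise incomparable, which forces $\{\fq_1,\dots,\fq_n\}$ to be exactly the set of minimal primes, giving $n = m$ and the asserted matching after permutation.

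The step I expect to be the main obstacle is (c), specifically controlling how evaluation at $\bQ^{\infty|\infty}$ interacts with intersections and radicals of $\GL$-ideals — i.e.\ proving that the finite intersection of the $\fp_i$ evaluates (up to radical) to the intersection of their evaluations, and dually that the minimal primes of $A(\bQ^{\infty|\infty})$ each arise from a minimal $\GL$-prime rather than from ``extra'' components invisible upstairs. The surjectivity direction (every minimal prime $\fq_j$ comes from some $\fp_i$) should follow by taking $\fp := $ the largest $\GL$-ideal of $A$ contained in (the preimage of) $\fq_j$, or by a saturation argument, and then invoking Theorem B to see it is $\GL$-prime and minimality to pin it down; but verifying that this $\fp$ is genuinely minimal and that no collapsing occurs is the delicate part. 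The remaining ingredients — noetherianity upstream and downstream, finiteness of minimal primes, and the formal properties of $\fp \mapsto \rad{\fp(\bQ^{\infty|\infty})}$ — are comparatively routine given Theorems A and B.
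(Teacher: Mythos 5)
There are genuine gaps, and they sit exactly in the parts you treat as routine, while the part you flag as the ``technical heart'' is actually the easy part. For (a), your assertion that $A$ is ``noetherian in the equivariant sense'' is not available: equivariant noetherianity of finitely generated tca's is a hard open problem in general, and is not claimed in this setting. What is true, and what you actually need, is only the ascending chain condition on $\GL$-radical ideals, i.e.\ that $\Spec_{\GL}(A)$ is a noetherian topological space (Theorem~\ref{thm4}). But that statement is itself one of the main theorems: its proof requires Theorem~\ref{thm1} (hence the key nilpotence result) together with Draisma's topological noetherianity theorem, applied after passing to $B=A(\bQ^{\infty|\infty})/\rad$, which is a finitely generated tca over the \emph{diagonal} $\GL_{\infty}$. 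It does not follow from a short argument. For (b), the proposed ``cleanest route'' is simply false: $A(\bQ^{\infty|\infty})$ is essentially never noetherian (already $\Sym(\bQ^{\infty|\infty})$ is a polynomial-exterior algebra in infinitely many variables). The finiteness of minimal primes is nontrivial and is obtained by a different mechanism: minimal primes of $A(\bQ^{\infty|\infty})$ correspond to those of the ordinary $\GL_{\infty}$-tca $B$; every minimal prime of $B$ is automatically $\GL_{\infty}$-stable (Proposition~\ref{prop:min-prime}, which uses that $\bQ[\GL_n]$ is a domain and therefore fails over $\GL_{r|s}$ --- this is precisely why one must pass to $B$); and Draisma's theorem then gives finitely many minimal $\GL_{\infty}$-stable primes. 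Your hedge ``or at least that its reduction has finitely many minimal primes'' is the right statement, but you give no valid argument for it.

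Part (c) of your proposal is essentially the paper's argument (map $\fp\mapsto\rad(\fp(\bQ^{\infty|\infty}))$, primality via Theorem~\ref{thm2}/Proposition~\ref{prop:rad-prime}, injectivity and inclusion-reflection via Theorem~\ref{thm1}), and the worries you single out evaporate: evaluation at $\bQ^{\infty|\infty}$ is an equivalence of tensor categories, so it preserves intersections of ideals on the nose, and $\rad(\bigcap_i J_i)=\bigcap_i\rad(J_i)$ for finitely many ideals is standard; likewise the ``surjectivity'' direction needs no saturation argument, since any minimal prime $\fq$ contains $\rad_{\GL}(0)(\bQ^{\infty|\infty})=(\fp_1\cap\cdots\cap\fp_n)(\bQ^{\infty|\infty})$, hence contains some $\fp_i$, hence contains the prime $\rad(\fp_i(\bQ^{\infty|\infty}))$, forcing equality by minimality. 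One small efficiency you partly notice but don't exploit: once (a) is in place, your (c) argument already forces $\{\rad(\fp_i(\bQ^{\infty|\infty}))\}$ to be exactly the set of minimal primes, so (b) would come for free; as written, though, your explicit justification of (b) is incorrect, and (a) rests on an unavailable noetherianity claim rather than on Theorem~\ref{thm4}.
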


This theorem gives a useful way to find the minimal $\GL$-primes, at least up to $\GL$-radical, and thus the irreducible components of the $\GL$-spectrum.

\begin{maintheorem} \label{thm4}
The $\GL$-spectrum $\Spec_{\GL}(A)$ of $A$ is a noetherian topological space.
\end{maintheorem}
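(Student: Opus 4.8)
The plan is to deduce this from Draisma's topological noetherianity theorem for ordinary polynomial functors \cite{draisma}, via the super incarnation. Recall that a topological space is noetherian exactly when its closed subsets satisfy the descending chain condition, and that the closed subsets of $\Spec_{\GL}(A)$ correspond to the $\GL$-radical ideals of $A$; so the task is to show that the $\GL$-radical ideals of $A$ satisfy the ascending chain condition. For a $\GL$-radical ideal $I$, the ideal $\rad{I(\bQ^{\infty|\infty})}$ is a $\GL_{\infty|\infty}$-stable radical ideal of $A(\bQ^{\infty|\infty})$, and Theorem~\ref{thm1} says precisely that $I \mapsto \rad{I(\bQ^{\infty|\infty})}$ is injective and reflects inclusions on $\GL$-radical ideals. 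Hence a strictly ascending chain of $\GL$-radical ideals of $A$ maps to a strictly ascending chain of $\GL_{\infty|\infty}$-stable radical ideals of $A(\bQ^{\infty|\infty})$, and it suffices to show no such chain exists.

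The idea now is to recognize $A(\bQ^{\infty|\infty})$ as an \emph{ordinary} incarnation of another tca. Let $T$ be the polynomial functor $V \mapsto V \oplus \Pi V$, where $\Pi V$ is $V$ with its parity reversed, and set $B = A \circ T$, a tca (a priori valued in super vector spaces). Then $B(\bQ^\infty) = A(\bQ^\infty \oplus \Pi \bQ^\infty) = A(\bQ^{\infty|\infty})$ and $B_0 = A_0$. Since $A$ is finitely generated over $A_0$, we may write it as a quotient of $A_0 \otimes \Sym(F)$ with $F$ a finite length polynomial functor; then $B$ is a quotient of $A_0 \otimes \Sym(F \circ T)$, and $F \circ T$ is again of finite length (by the branching rule, $S_\lambda(V \oplus \Pi V)$ is a finite sum of Schur functors of $V$, up to parity shifts). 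Thus $B$ is finitely generated over $A_0$. Moreover, the natural transformation $g \mapsto g \oplus \Pi g$ identifies the $\GL_\infty$ acting on $B(\bQ^\infty)$ by functoriality with the diagonal copy of $\GL_\infty$ inside $\GL_{\infty|\infty}$; consequently every $\GL_{\infty|\infty}$-stable ideal of $A(\bQ^{\infty|\infty}) = B(\bQ^\infty)$ is a $\GL$-ideal of the tca $B$.

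Finally, the odd elements of the super-commutative ring $B(\bQ^\infty)$ are nilpotent, hence lie in every radical ideal; so if $\overline{B}$ denotes the quotient of $B$ by the $\GL$-ideal generated by its odd part --- a finitely generated tca over $A_0$ living in ordinary vector spaces --- then the radical ideals of $B(\bQ^\infty)$ are exactly those of $\overline{B}(\bQ^\infty)$, compatibly with the $\GL_\infty$-actions. Combining this with the previous paragraph, a strictly ascending chain of $\GL$-radical ideals of $A$ produces a strictly ascending chain of $\GL_\infty$-stable radical ideals of $\overline{B}(\bQ^\infty)$. But $\overline{B}$ is a finitely generated tca over the noetherian ring $A_0$, so by Draisma's theorem \cite{draisma} (over a noetherian base ring) the space $\Spec(\overline{B}(\bQ^\infty))$ is $\GL_\infty$-topologically noetherian, i.e.\ its $\GL_\infty$-stable radical ideals satisfy the ascending chain condition; thus no such chain exists. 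This contradiction proves the theorem.

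I expect the conceptual heart of the argument to be the reduction in the second paragraph --- realizing $A(\bQ^{\infty|\infty})$ as the ordinary-$\bQ^\infty$ incarnation of the tca $A \circ T$, which makes the super world disappear so that Draisma's (non-super) theorem applies directly, rather than having to re-prove topological noetherianity over super vector spaces. The remaining points --- that $A \circ T$ inherits finite generation, that passing to the quotient by the odd part does not disturb the chain of radical ideals, and that Draisma's theorem is available over the noetherian base $A_0$ --- should be routine, though the last one may deserve a careful reference.
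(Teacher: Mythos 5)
Your argument is correct and is essentially the paper's own proof: the paper likewise uses Theorem~\ref{thm1} to transfer a chain of $\GL$-radical ideals to a chain of radical ideals in the super incarnation, then observes that $A(\bQ^{\infty|\infty})$ modulo its nilpotents, with the diagonal $\GL_{\infty}\subset\GL_{\infty|\infty}$ acting (your $A\circ T$ construction phrased representation-theoretically), is a finitely $\GL$-generated ordinary tca over the noetherian ring $A_0$, to which Draisma's theorem (in its noetherian-base form, which the paper also only cites with the remark that Draisma's proof adapts) applies via Corollary~\ref{cor:acc-rad}. The only cosmetic difference is that you quotient by the ideal generated by the odd part while the paper quotients by the full nilradical, which changes nothing.
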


This theorem is a strengthening of Draisma's topological noetherianity theorem: indeed, Draisma's theorem only encompasses the $\GL$-stable prime ideals, while this theorem accommodates all $\GL$-primes. In fact, this theorem is easily deduced by combining Draisma's theorem with our other theorems; we do not have a new proof of Draisma's result.

\subsection{An example}

In \S \ref{s:sym2}, we examine the tca $A=\Sym(\Sym^2(\bC^{\infty}))$. Using our main theorems, we classify the $\GL$-primes of $A$ (they are the rectangular ideals) and the $\GL$-radical ideals of $A$ (they are the ideals generated by a single irreducible representation). This example provides a good illustration of how our main theorems allow one to understand the equivariant commutative algebra of $A$ through standard geometric means. It also provides a reconceptualization of sorts for the rectangular ideals: they constitute the equivariant spectrum of $A$. These ideals have long been of interest, as they are symbolic powers of determinantal ideals.

\subsection{Connection to other work}

The notions of $\GL$-prime and the $\GL$-spectrum were discussed in \cite[\S 3]{symu1} (with slightly different terminology). However, that paper only works with so-called bounded tca's, and for these $\GL$-primes are the same as $\GL$-stable primes, so the chief difficulties disapper. (Note that the tca in Example~\ref{ex:main} is not bounded.)

In forthcoming joint work with Rohit Nagpal \cite{svar, sideals}, we study the $\fS_{\infty}$-primes in the ring $\bC[x_1, x_2, \ldots]$ and manage to completely classify them. (Here $\fS_{\infty}$ denotes the infinite symmetric group.) However, there is no general theory of $\fS_{\infty}$-primes yet.

One can, and usually does, define twisted commutative algebras without any reference to super vector spaces. However, our results show that one is essentially forced to use super vector spaces to fully understand tca's. This is reminscent of Deligne's theorem \cite[Theorem~0.6]{deligne}, in which a natural class of tensor categories (defined without any reference to super vector spaces) is characterized using super objects. It would be interesting to find a direct connection between these results.

\subsection{Outline}

In \S \ref{s:bg}, we review relevant background material. In \S \ref{s:key}, we prove the key theorem, which is a certain special case of Theorem~\ref{thm1}. In \S \ref{s:main}, we deduce the main theorems stated above. Finally, in \S \ref{s:sym2}, we work out the $\Sym(\Sym^2)$ case in detail.

\subsection*{Acknowledgments}

We thank Steven Sam for helpful discussions, and allowing us to include some jointly conceived ideas in \S \ref{s:sym2}.

\section{Background} \label{s:bg}

\subsection{Polynomial representations}

We recall some background material on polynomial representations. We refer to \cite{expos} and \cite[\S 2.2]{periplectic} for more details.

Regard $\GL_n$ as an algebraic group over $\bQ$. By an algebraic representation of $\GL_n$, we mean a comodule over $\bQ[\GL_n]$; the dimension is not required to be finite. We regard $\GL_{\infty}$ as the inductive system of algebraic groups $\bigcup_{n \ge 1} \GL_n$. An algebraic representation of $\GL_{\infty}$ is a vector space equipped with compatible algebraic representations of $\GL_n$ for all $n$. The basic example of such a representation is the standard representation $\bQ^{\infty}=\bigcup_{n \ge 1} \bQ^n$. A \emph{polynomial representation} of $\GL_{\infty}$ is one that occurs as a subquotient of a (possibly infinite) direct sum of tensor powers of the standard representation. We denote the category of such representations by $\Rep^{\pol}(\GL_{\infty})$. It is a $\bQ$-linear abelian category and closed under tensor products. The structure of this category is well-understood: it is semi-simple, and the simple objects are given by $\bS_{\lambda}(\bQ^{\infty})$, where $\bS_{\lambda}$ denotes the Schur functor associated to the partition $\lambda$. All simple objects are absolutely simple. Every polynomial representation $V$ admits a canonical decomposition $V = \bigoplus_{\lambda} V_{\lambda} \otimes \bS_{\lambda}(\bQ^{\infty})$ where the $V_{\lambda}$ are multiplicity spaces. We endow $V$ with a grading by declaring the elements of $\bS_{\lambda}(\bQ^{\infty})$ to have degree $\vert \lambda \vert$.

By an algebraic representation of $\GL_{r|s}$, we mean a comodule over the Hopf superalgebra $\bQ[\GL_{r|s}]$. We then define polynomial representations of $\GL_{\infty|\infty}$ just as before, and denote the category by $\Rep^{\pol}(\GL_{\infty|\infty})$. It is again semi-simple abelian and closed under tensor product, and the simple objects have the form $\bS_{\lambda}(\bQ^{\infty|\infty})$. Warning: if $V$ is a polynomial representation of $\GL_{\infty|\infty}$ then $V[1]$ (shift in super grading) is typically not a polynomial representation, according to our definition. This disagrees with the convention used in \cite{periplectic}.

Consider the category $\Fun(\Vec, \Vec)$ of endofunctors of the category $\Vec$ of $\bQ$-vector spaces. Let $T_n$ be the functor given by $T_n(V)=V^{\otimes n}$. We say that an object of $\Fun(\Vec, \Vec)$ is \emph{polynomial} if it is a subquotient of a (possibly infinite) direct sum of $T_n$'s. We let $\Pol$ be the category of polynomial functors. It is semi-simple abelian and closed under tensor products. The simple objects are the Schur functors $\bS_{\lambda}$. We have equivalences of categories
\begin{displaymath}
\xymatrix@R=1mm{
\Rep^{\pol}(\GL_{\infty}) & \Pol \ar[l] \ar[r] & \Rep^{\pol}(\GL_{\infty|\infty}) \\
F(\bQ^{\infty}) & F \ar@{|->}[r] \ar@{|->}[l] & F(\bQ^{\infty|\infty}) }
\end{displaymath}
These equivalences are compatible with tensor products, and respect algebras, modules, and ideals within the categories.

We will at times need to evaluate polynomial functors on finite dimensional spaces. We recall the relevant result.

\begin{proposition} \label{prop:schur-eval}
Let $\lambda$ be a partition, and let $r,s \in \bN$. If $\lambda_{r+1} \le s$ then $\bS_{\lambda}(\bQ^{r|s})$ is an absolutely irreducible representation of $\GL_{r|s}$; otherwise it vanishes. Moreover, the irreducible representations of $\GL_{r|s}$ obtained in this way are mutually non-isomorphic.
\end{proposition}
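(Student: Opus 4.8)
The plan is to deduce everything from Schur--Weyl duality. Fix $n=\vert\lambda\vert$. For any object $V$ of a $\bQ$-linear symmetric tensor category, semisimplicity of $\bQ[\fS_n]$ gives a decomposition $V^{\otimes n}=\bigoplus_{\mu\vdash n}\bS_\mu(V)\otimes M_\mu$, natural in $V$, with $\fS_n$ acting on the $M_\mu$; here $M_\mu$ is the Specht module and $\bS_\mu(V)=\operatorname{Hom}_{\fS_n}(M_\mu,V^{\otimes n})$, which is one of the standard ways to define the Schur functor. Specializing to $V=\bQ^{r|s}$, let $B_n\subseteq\operatorname{End}(V^{\otimes n})$ be the image of $\bQ[\fS_n]$. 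The one nontrivial external input I would import is super Schur--Weyl duality, due independently to Sergeev and to Berele--Regev (see also \cite[\S 2.2]{periplectic}): $B_n$ is the full commutant of the $\GL_{r|s}$-action on $V^{\otimes n}$. Granting this, the proposition follows from the double centralizer formalism once I identify the set $S=\{\mu\vdash n:\bS_\mu(\bQ^{r|s})\neq 0\}$.

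To determine $S$, write $\bQ^{r|s}=\bQ^{r|0}\oplus\bQ^{0|s}$ and use the identity $\bS_\lambda(U\oplus W)=\bigoplus_{\mu,\nu}\bigl(\bS_\mu(U)\otimes\bS_\nu(W)\bigr)^{\oplus c^\lambda_{\mu\nu}}$, which holds in any $\bQ$-linear symmetric tensor category since it merely records the restriction of $M_\lambda$ along $\fS_k\times\fS_{n-k}\subseteq\fS_n$; here $c^\lambda_{\mu\nu}$ is a Littlewood--Richardson coefficient. Now $\bS_\mu(\bQ^r)\neq 0$ iff $\ell(\mu)\le r$, while $\bS_\nu(\bQ^{0|s})$, the value of a Schur functor on a purely odd space, is (up to a parity shift) the Schur functor $\bS_{\nu'}(\bQ^s)$ on the ordinary space $\bQ^s$, where $\nu'$ is the transpose partition; this is nonzero iff $\nu_1\le s$. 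Thus $\bS_\lambda(\bQ^{r|s})\neq 0$ iff there exist partitions $\mu,\nu$ with $\ell(\mu)\le r$, $\nu_1\le s$, and $c^\lambda_{\mu\nu}>0$, and I claim this holds iff $\lambda_{r+1}\le s$. For $\Leftarrow$, take $\mu=(\lambda_1,\dots,\lambda_r)$ and $\nu=(\lambda_{r+1},\lambda_{r+2},\dots)$: filling each box of the shape $\lambda/\mu$ with its own row index yields a semistandard tableau whose reverse reading word, namely $1^{\nu_1}2^{\nu_2}\cdots$, is a lattice word precisely because $\nu$ is a partition, so $c^\lambda_{\mu\nu}\ge 1$, and $\ell(\mu)\le r$, $\nu_1=\lambda_{r+1}\le s$. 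For $\Rightarrow$, using $c^\lambda_{\nu\mu}=c^\lambda_{\mu\nu}>0$, the skew shape $\lambda/\nu$ carries a semistandard tableau with entries in $\{1,\dots,\ell(\mu)\}\subseteq\{1,\dots,r\}$, so every column of $\lambda/\nu$ has at most $r$ boxes; since $\nu$ has at most $s$ columns, the $(s+1)$-th column of $\lambda/\nu$ equals that of $\lambda$, which therefore has at most $r$ boxes, i.e.\ $\lambda_{r+1}\le s$.

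With $S$ in hand the rest is formal. Since $\bQ$ is a splitting field for every symmetric group, $\bQ[\fS_n]$ is split semisimple, hence so is its quotient $B_n\cong\prod_{\mu\in S}\operatorname{End}_{\bQ}(M_\mu)$; the double centralizer theorem over $\bQ$ then gives $V^{\otimes n}=\bigoplus_{\mu\in S}\bS_\mu(V)\otimes M_\mu$ with the modules $\bS_\mu(V)$, $\mu\in S$, pairwise non-isomorphic and each absolutely irreducible over its commutant $A_n=\operatorname{End}_{B_n}(V^{\otimes n})$. As $A_n$ is exactly the image of $\bQ[\GL_{r|s}]$ on $V^{\otimes n}$ and $\bS_\mu(V)$ is $\GL_{r|s}$-stable, each $\bS_\mu(V)$ with $\mu\in S$ is absolutely irreducible as a representation of $\GL_{r|s}$, and by the previous paragraph $S=\{\mu\vdash n:\mu_{r+1}\le s\}$. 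Finally, for partitions $\alpha,\beta$ with $\vert\alpha\vert\neq\vert\beta\vert$ the representations $\bS_\alpha(\bQ^{r|s})$ and $\bS_\beta(\bQ^{r|s})$ cannot be isomorphic, because the central $\mathbf{G}_m\subseteq\GL_{r|s}$ of scalar matrices acts on $\bS_\gamma(\bQ^{r|s})$ through $z\mapsto z^{\vert\gamma\vert}$; combined with the within-degree statement, this gives the asserted mutual non-isomorphism.

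The main obstacle is super Schur--Weyl duality itself, i.e.\ the surjectivity of $\bQ[\fS_n]$ onto the $\GL_{r|s}$-commutant on $V^{\otimes n}$. The easy inclusion, that this image lands inside the commutant, is immediate, but the surjectivity genuinely uses the sign rule of $\SVec$ and does not reduce to the classical statement, so I would cite it rather than reprove it. Everything else---the Littlewood--Richardson lemma above and the double centralizer formalism---is routine.
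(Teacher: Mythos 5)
Your argument is necessarily a different route from the paper's, since the paper offers no proof at all for this proposition and simply cites \cite[\S 3.2.2]{chengwang}; you instead derive it from super Schur--Weyl duality plus Littlewood--Richardson combinatorics. The combinatorial half of your proof is correct: the decomposition of $\bS_\lambda(U \oplus W)$ via Littlewood--Richardson coefficients, the identification of $\bS_\nu(\bQ^{0|s})$ with $\bS_{\nu'}(\bQ^s)$ up to parity, the explicit tableau showing $c^\lambda_{\mu\nu}>0$ for $\mu=(\lambda_1,\dots,\lambda_r)$, $\nu=(\lambda_{r+1},\dots)$, and the column-counting argument via $c^\lambda_{\nu\mu}$ for the converse all check out, and the central $\bG_m$ argument cleanly handles non-isomorphism across degrees.

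The one step that does not follow as written is ``$A_n$ is exactly the image of $\bQ[\GL_{r|s}]$ on $V^{\otimes n}$.'' The half of the duality you explicitly import is $\operatorname{End}_{\GL_{r|s}}(V^{\otimes n})=B_n$; what your final step uses is the \emph{other} half, namely that the image $A'$ of the group (or of $\cU(\gl_{r|s})$ together with the even group) equals $\operatorname{End}_{B_n}(V^{\otimes n})$. Passing from the first statement to the second is a bicommutant assertion, and in finite dimensions the bicommutant of a subalgebra can be strictly larger than the subalgebra when the module is not semisimple: the upper-triangular $2\times 2$ matrices acting on $\bQ^2$ have scalar commutant, hence full bicommutant. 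Semisimplicity of $V^{\otimes n}$ as a $\GL_{r|s}$-module is not available for free here---finite-dimensional representations of $\GL_{r|s}$ are not semisimple in general, and semisimplicity of the polynomial ones is essentially part of what is being established. From $\operatorname{End}_{A'}(V^{\otimes n})=B_n$ alone you only get $\operatorname{End}_{A'}(\bS_\mu(\bQ^{r|s}))=\bQ$ and vanishing of $\operatorname{Hom}_{A'}$ between distinct factors, which is weaker than irreducibility over $A'$. The fix is purely one of citation: the Berele--Regev/Sergeev theorem is a genuine double-commutant statement, so cite the surjectivity of $\cU(\gl_{r|s})$ onto $\operatorname{End}_{\fS_n}(V^{\otimes n})$---that is the half you actually need; then $A'=A_n$ is split semisimple and your double-centralizer conclusions (absolute irreducibility and pairwise non-isomorphism within a given degree) go through as stated. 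A final word would also be in order on why irreducibility over this algebra implies irreducibility as a comodule over $\bQ[\GL_{r|s}]$, the paper's definition of a representation, though that is routine for polynomial representations.
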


\begin{proof}
See \cite[\S 3.2.2]{chengwang}.
\end{proof}

We will require one additional simple result on polynomial representations. Define the \emph{width} of a partition $\lambda$, denote $w(\lambda)$, to be $\lambda_1$. Define the \emph{width} of a polynomial functor  $F$, denoted $w(F)$, to be the supremum of $w(\lambda)$ over those $\lambda$ for which $\bS_{\lambda}$ occurs in $F$.

\begin{proposition} \label{prop:width}
Let $F$ be a polynomial functor. The following are equivalent:
\begin{enumerate}
\item $w(F) \le N$.
\item For every $n$, every weight $\mu$ appearing in $F(\bQ^n)$ satisfies $\mu_1 \le N$.
\end{enumerate}
\end{proposition}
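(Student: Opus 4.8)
The plan is to reduce everything to the weight combinatorics of Schur functors and exterior powers, using that $\Pol$ is semisimple. Write $F \cong \bigoplus_{\lambda} \bS_{\lambda}^{\oplus m_{\lambda}}$ in $\Pol$. Evaluation at $\bQ^n$ is additive and commutes with direct sums, so $F(\bQ^n) \cong \bigoplus_{\lambda} \bS_{\lambda}(\bQ^n)^{\oplus m_{\lambda}}$ as representations of $\GL_n$; since the $\mu$-weight space of a direct sum is the direct sum of the $\mu$-weight spaces, the weights occurring in $F(\bQ^n)$ are precisely those occurring in some $\bS_{\lambda}(\bQ^n)$ with $m_{\lambda} \neq 0$. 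Moreover, the set of weights of any polynomial representation of $\GL_n$ is stable under permuting coordinates, so condition (b) is equivalent to the assertion that every coordinate of every weight of every $F(\bQ^n)$ is at most $N$.

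For (a)$\Rightarrow$(b): assume $w(F) \le N$ and fix $\lambda$ with $m_{\lambda} \neq 0$, so that $\lambda_1 \le N$. The conjugate partition $\lambda'$ has exactly $\lambda_1$ parts, and in characteristic zero $\bS_{\lambda}$ occurs as a direct summand of $\lw^{\lambda'_1} \otimes \cdots \otimes \lw^{\lambda'_{\lambda_1}}$. Each weight of $\lw^k(\bQ^n)$ is a $0/1$ vector, so each weight of $\lw^{\lambda'_1}(\bQ^n) \otimes \cdots \otimes \lw^{\lambda'_{\lambda_1}}(\bQ^n)$ is a sum of $\lambda_1$ such vectors and hence has all coordinates at most $\lambda_1 \le N$; the same therefore holds for the summand $\bS_{\lambda}(\bQ^n)$. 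Running over all $\lambda$ with $m_{\lambda} \neq 0$ and invoking the first paragraph yields (b).

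For (b)$\Rightarrow$(a): we prove the contrapositive. If $w(F) > N$, choose $\lambda$ with $m_{\lambda} \neq 0$ and $\lambda_1 = w(\lambda) > N$, and let $n$ be the number of parts of $\lambda$. Then $\bS_{\lambda}(\bQ^n)$ is nonzero: by Proposition~\ref{prop:schur-eval} (applied with $s=0$) it is the irreducible representation of $\GL_n$ of highest weight $\lambda$, so $\lambda$ itself is among its weights. Since $\bS_{\lambda}(\bQ^n)$ is a direct summand of $F(\bQ^n)$, the weight $\lambda$ occurs in $F(\bQ^n)$, and it has $\lambda_1 > N$, contradicting (b).

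The argument is routine and I do not expect a genuine obstacle. The closest thing to a subtlety is the bookkeeping in the first paragraph---that no weights are lost or cancelled under evaluation or under passing to a single $\bS_{\lambda}$-isotypic summand---together with the invocation of the standard fact that $\bS_{\lambda}$ is a direct summand of a tensor product of $\lambda_1$ exterior powers. A reader who prefers to avoid the latter can instead observe that the weights of the irreducible $\GL_n$-module of highest weight $\lambda$, once sorted into weakly decreasing order, are dominated by $\lambda$ in the dominance order, which already forces every coordinate to be $\le \lambda_1$.
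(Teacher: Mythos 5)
Your proof is correct and follows essentially the same route as the paper: reduce to the Schur functors occurring in $F$, note that $\lambda$ itself appears as a weight of $\bS_{\lambda}(\bQ^n)$ (which gives (b)$\Rightarrow$(a)), and bound the first coordinate of every weight of $\bS_{\lambda}(\bQ^n)$ by $\lambda_1$. The only difference is cosmetic and lies in that last bound: the paper invokes the highest-weight property of $\lambda$ directly (exactly the dominance-order alternative you mention at the end), whereas you realize $\bS_{\lambda}$ as a summand of $\lw^{\lambda'_1}\otimes\cdots\otimes\lw^{\lambda'_{\lambda_1}}$ and use that exterior powers have $0/1$ weights; both are fine.
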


\begin{proof}
It suffices to treat the case where $F=\bS_{\lambda}$ is a simple object; we then have $w(F)=\lambda_1$. For $n \gg 0$, the $\GL_n$ representation $F(\bQ^n)$ is irreducible with highest weight $\lambda$. Thus there is a weight $\mu$ in $F(\bQ^n)$ with $\mu_1=w(F)$ (namely, $\mu=\lambda$). Furthermore, since $\lambda$ is a highest weight, we have $\mu_1 \le \lambda_1=w(F)$ for any other weight $\mu$. This proves the result.
\end{proof}

\subsection{Twisted commutative algebras}

A \emph{twisted commutative algebra} (tca) is a commutative algebra object in one of the three equivalent categories $\Rep^{\pol}(\GL_{\infty})$, $\Rep^{\pol}(\GL_{\infty|\infty})$, or $\Pol$. For the moment, we work in $\Rep^{\pol}(\GL_{\infty})$ to be definite.

Let $A$ be a tca in $\Rep^{\pol}(\GL_{\infty})$. Every polynomial representation carries a natural grading, and so $A$ is canonically graded. This is compatible with the ring structure, i.e., $A$ is a graded ring. In particular, we can regard $A$ as an algebra over is degree~0 piece $A_0$, which is an ordinary commutative ring.

By a ``subrepresentation'' of $A$, we mean a $\bQ$-subspace that is stable by $\GL_{\infty}$. In practice, $A$ will often be a $\bC$-algebra, but we use $\bQ$-subrepresentations nonetheless. One should think of a finite length subrepresentations of $A$ as providing an equivariant substitute for the concept of element (or perhaps finite sets of elements).

Suppose $A \to B$ is a homomorphism of tca's. We say that $B$ is \emph{finitely $\GL$-generated} over $A$ if there is some finite length subrepresentation $E$ of $B$ such that the natural map $A \otimes \Sym(E) \to B$ is surjective. We typically apply this in the case $A=B_0$.

\subsection{Ideals in tca's}

Let $A$ be as above, i.e., a tca in $\Rep^{\pol}(\GL_{\infty})$. A \emph{$\GL$-ideal} of $A$ is a $\GL$-stable ideal of $A$. We say that a $\GL$-ideal $I$ is \emph{finitely $\GL$-generated} if there is a finite length subrepresentation $E$ of $A$ that generates $I$ as an ideal. The sum and product of two finitely $\GL$-generated ideals is again finitely $\GL$-generated; for products, this relies on the fact that the tensor product of two finite length polynomial representations is again finite length.

We say that a $\GL$-ideal $\fp$ is \emph{$\GL$-prime} if $VW \subset \fp$ implies $V \subset \fp$ or $W \subset \fp$, for subrepresentations $V$ and $W$ of $A$; here $VW$ denotes the image of the map $V \otimes W \to A$. It is equivalent to ask the same condition with $V$ and $W$ finite length representations, or cyclic representations, or $\GL$-idelas, or finitely generated $\GL$-ideals. We define the \emph{$\GL$-radical} of a $\GL$-ideal $I$, denoted $\rad_{\GL}{I}$ to be the sum of all subrepresentations $V$ or $A$ such that $V^n \subset I$ for some $n$; again, one can use ideals in place of subrepresentations. We say that $I$ is \emph{$\GL$-radical} if $I=\rad_{\GL}{I}$. We note that every $\GL$-prime is $\GL$-radical.

\begin{remark}
A ``prime $\GL$-ideal'' of $A$ is a $\GL$-ideal of $A$ that is prime. This is potentially very different from a ``$\GL$-prime ideal'' of $A$. Similarly, ``radical $\GL$-ideal'' and ``$\GL$-radical ideal'' are potentially very different.
\end{remark}

We now establish some properties of the above definitions that are analogous to the classical situation.

\begin{proposition}
Let $I$ be a $\GL$-ideal of $A$ and let $E$ be a finite length subrepresentation of $\rad_{\GL}(I)$. Then $E^n \subset I$ for some $n$. Similarly, if $J$ is a finitely $\GL$-generated ideal contained in $\rad_{\GL}(I)$ then $J^n \subset I$ for some $n$.
\end{proposition}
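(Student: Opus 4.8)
The plan is to first reduce the sum $\rad_{\GL}(I)=\sum_\alpha V_\alpha$ (over all subrepresentations $V_\alpha$ with $V_\alpha^{n_\alpha}\subset I$) to a sum over \emph{finitely many} indices, and then to bound the exponent by a pigeonhole argument on products of subrepresentations.

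First I would use semisimplicity. Since $\Rep^{\pol}(\GL_\infty)$ is semisimple and $E$ has finite length, $E$ is a finite direct sum of simple objects, and each simple object is generated as a representation by any one of its nonzero vectors (the $\GL$-span of such a vector is a nonzero, hence full, subrepresentation). Thus $E$ is generated, as a subrepresentation, by finitely many vectors $e_1,\dots,e_m$. Each $e_j$ lies in $\rad_{\GL}(I)=\sum_\alpha V_\alpha$, so it already lies in a finite subsum; taking the union of these over $j$, all the $e_j$ lie in a common finite subsum $W:=V_{\alpha_1}+\cdots+V_{\alpha_k}$. As $W$ is a subrepresentation containing a generating set of $E$, we get $E\subset W$.

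Next I would show $W^N\subset I$ for $N$ large enough (any $N>\sum_{i=1}^k(n_{\alpha_i}-1)$ works), which yields $E^N\subset W^N\subset I$ and proves the first assertion. Here one expands $W^N$, as a subrepresentation of $A$, into the sum over all functions $f\colon\{1,\dots,N\}\to\{1,\dots,k\}$ of the products $V_{\alpha_{f(1)}}\cdots V_{\alpha_{f(N)}}$. For each $f$, pigeonhole forces some index $i$ to be hit at least $n_{\alpha_i}$ times; since $A$ is commutative, we may regroup the product so that it is contained in $V_{\alpha_i}^{n_{\alpha_i}}\cdot A\subset I$, using that $I$ is an ideal. (Equivalently, one can induct on $k$, peeling off one $V_{\alpha_i}$ at a time.) For the second assertion, write $J$ as the ideal generated by a finite length subrepresentation $E$; then $E\subset J\subset\rad_{\GL}(I)$, so the first part gives $E^n\subset I$, and since $A$ is unital we have $J=AE$, whence $J^n=AE^n\subset AI\subset I$.

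I expect the only real subtlety to be the first step: one must know that a finite length subrepresentation cannot be ``spread across'' infinitely many of the $V_\alpha$, and this is precisely where finiteness of $E$ is used and why the exponent $N$ comes out uniform (the statement genuinely fails without some finiteness hypothesis). Everything after that is routine degree bookkeeping; it is cleanest to phrase it in terms of images of tensor products of subrepresentations rather than individual elements, though since we may work in $\Rep^{\pol}(\GL_\infty)$, where $A(\bQ^\infty)$ is an ordinary commutative $\bQ$-algebra, this is largely a matter of taste.
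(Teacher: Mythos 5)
Your proposal is correct and follows essentially the same route as the paper: use finite length to trap $E$ in a finite subsum of the subrepresentations whose powers lie in $I$, then bound the exponent by a pigeonhole/regrouping argument (the paper just states the bound $n=\#\cJ\cdot\max_i n(i)$ without spelling out the pigeonhole), and reduce the ideal case to a finite length generating subrepresentation. Your extra justification of the finite-subsum step via semisimplicity is fine but not needed beyond finiteness of length.
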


\begin{proof}
By definition, we can write $\rad_{\GL}(I)=\sum_{i \in \cI} W_i$ where $W_i$ is a subrepresentation of $A$ such that $W_i^{n(i)} \subset I$ for some $n(i)$. Since $E$ is contained in $\rad(I)$ and of finite length, there is some finite subset $\cJ$ of $\cI$ such that $E \subset \sum_{i \in \cJ} W_i$. We thus have $E^n \subset I$ where $n=\# \cJ \cdot \max_{i \in \cJ} n(i)$. For the ideal case, simply pick a finite length subrepresentation that generates and appeal to the previous argument.
\end{proof}

\begin{proposition} \label{prop:rad-int}
Let $I$ be an ideal of $A$. Then $\rad_{\GL}(I)$ is the intersection of the $\GL$-primes containing $I$.
\end{proposition}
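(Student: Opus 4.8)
The plan is to transplant the classical argument---that the radical of an ideal is the intersection of the primes containing it---to the equivariant setting, with finite length subrepresentations playing the role of elements. The easy inclusion is $\rad_{\GL}(I) \subseteq \bigcap_{\fp} \fp$, where $\fp$ runs over the $\GL$-primes containing $I$. By the definition of $\rad_{\GL}$ it suffices to show that a subrepresentation $V$ with $V^n \subseteq I$ is contained in each such $\fp$, and this is an induction on $n$: from $V \cdot V^{n-1} \subseteq V^n \subseteq I \subseteq \fp$ and $\GL$-primality, either $V \subseteq \fp$ or $V^{n-1} \subseteq \fp$, and in the second case we are done by induction.

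For the reverse inclusion I would argue contrapositively: given a subrepresentation not contained in $\rad_{\GL}(I)$, I will produce a $\GL$-prime $\fp \supseteq I$ not containing it. First shrink it to a finite length subrepresentation $V$ still not lying in $\rad_{\GL}(I)$; this is possible because $\rad_{\GL}(I)$ is a sum of subrepresentations, hence contains a given subrepresentation as soon as it contains all of its finite length subrepresentations. With $V$ of finite length, the preceding proposition (applied to $\rad_{\GL}(I)$) shows $V^n \not\subseteq I$ for every $n \ge 1$. Now let $\Sigma$ be the poset of $\GL$-ideals $J$ with $I \subseteq J$ and $V^n \not\subseteq J$ for all $n \ge 1$, ordered by inclusion. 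It is nonempty (it contains $I$) and closed under unions of chains, since the union of a chain of $\GL$-ideals is a $\GL$-ideal, and if it contained some $V^n$ then, $V^n$ being of finite length (as $V^{\otimes n}$ is), $V^n$ would already lie in one member of the chain. By Zorn's lemma choose $\fp$ maximal in $\Sigma$.

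It remains to check that $\fp$ is $\GL$-prime; granting this, $\fp \supseteq I$ and $V = V^1 \not\subseteq \fp$ complete the proof. By the remarks preceding the proposition, $\GL$-primality can be tested on $\GL$-ideals, so suppose $J_1 J_2 \subseteq \fp$ with $J_1, J_2$ $\GL$-ideals neither contained in $\fp$. Then each $\fp + J_i$ is a $\GL$-ideal strictly larger than $\fp$, hence not in $\Sigma$ by maximality, so $V^{a_i} \subseteq \fp + J_i$ for some $a_i \ge 1$. But then $V^{a_1 + a_2} \subseteq V^{a_1} V^{a_2} \subseteq (\fp + J_1)(\fp + J_2) \subseteq \fp + J_1 J_2 \subseteq \fp$, contradicting $\fp \in \Sigma$; hence $\fp$ is $\GL$-prime. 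There is no deep obstacle here, but the point to handle with care is the finiteness reduction at the start of the second inclusion: both the Zorn argument and the appeal to the previous proposition require $V$, and hence each power $V^n$, to be of finite length, which works because tensor products of finite length polynomial representations are again of finite length.
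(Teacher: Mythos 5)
Your proof is correct and follows essentially the same route as the paper's: the easy inclusion via powers and $\GL$-primality, and the reverse inclusion by applying Zorn's lemma to the poset of $\GL$-ideals avoiding all powers of a finite-length (equivalently, finitely $\GL$-generated) witness, with maximality forcing $\GL$-primality. Your explicit inclusion of the condition $I \subseteq J$ in the Zorn poset and the reduction to a finite length subrepresentation are exactly the intended details.
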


\begin{proof}
Let $\cP$ be the set of $\GL$-primes containing $I$. Suppose $\fp \in \cP$. Let $\fa$ be a finitely $\GL$-generated $\GL$-ideal contained in $\rad_{\GL}(I)$. Then $\fa^n \subset I \subset \fp$ for some $n$, and so $\fa \subset \fp$ since $\fp$ is $\GL$-prime. Since this holds for all $\fa$, we have $\rad_{\GL}(I) \subset \fp$. Since this holds for all $\fp$, we have $\rad_{\GL}(I) \subset \bigcap_{\fp \in \cP} \fp$.

We now prove the reverse inclusion. Let $\fc$ be a finitely $\GL$-generated $\GL$-ideal of $A$ not contained in $\rad(I)$. Let $S$ be the set of $\GL$-ideals $\fa$ of $A$ such that no power of $\fc$ is contained in $\fa$. Suppose that $\fa_1 \subset \fa_2 \subset \cdots$ is a chain in $S$, and let $\fa$ be its union. Then $\fa$ belongs to $S$ too. Indeed, if $\fc^n$ belongs to $\fa$ then, because it is finitely generated, it belongs to some $\fa_i$, a contradiction. Let $\fp$ be a maximal element of $S$, which exists by Zorn's lemma. We claim that $\fp$ is prime. Indeed, suppose $\fa \fb \subset \fp$, but $\fa,\fb \not\subset \fp$. Then $\fp+\fa$ and $\fp+\fb$ strictly contain $\fp$, and therefore do not belong to $S$. Thus $\fc^n \subset \fp+\fa$ and $\fc^m \subset \fp+\fb$ for some $n$ and $m$. Thus $\fc^{n+m} \subset (\fp+\fa)(\fp+\fb) \subset \fp+\fa \fb= \fp$, a contradiction. It follows that $\fc \not\subset \fp$, which completes the proof. Indeed, if $\bigcap_{\fp \in \cP} \fp$ were strictly larger than $\rad_{\GL}{I}$, then we could find a finitely $\GL$-generated $\fc$ contained in $\bigcap_{\fp \in \cP} \fp$ but not contained $\rad_{\GL}{I}$, and the above argument would yield a contradiction.
\end{proof}

\begin{proposition}
Every $\GL$-prime of $A$ contains some minimal $\GL$-prime of $A$.
\end{proposition}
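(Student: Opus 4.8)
The plan is to run the usual Krull-style existence argument via Zorn's lemma. Fix a $\GL$-prime $\fp$ of $A$, and let $S$ be the set of all $\GL$-primes of $A$ contained in $\fp$, partially ordered by \emph{reverse} inclusion. A minimal $\GL$-prime contained in $\fp$ is precisely a maximal element of $S$ for this order, so it suffices to produce such a maximal element. Since $\fp$ itself lies in $S$, the poset $S$ is non-empty, and by Zorn's lemma it is enough to show that every chain in $S$ has an upper bound in $S$, i.e., that the intersection of a (non-empty) chain of $\GL$-primes contained in $\fp$ is again a $\GL$-prime contained in $\fp$.

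So let $\{\fp_i\}_{i \in \cI}$ be a non-empty chain of $\GL$-primes of $A$ and set $\fq = \bigcap_{i \in \cI} \fp_i$. Then $\fq$ is visibly a $\GL$-ideal. To check it is $\GL$-prime, suppose $V$ and $W$ are subrepresentations of $A$ with $VW \subset \fq$ but $V \not\subset \fq$ and $W \not\subset \fq$. Since $V \not\subset \bigcap_i \fp_i$, there is an index $i$ with $V \not\subset \fp_i$, and likewise an index $j$ with $W \not\subset \fp_j$. As $\{\fp_i\}$ is a chain, after relabeling we may assume $\fp_i \subset \fp_j$; then $W \not\subset \fp_j$ forces $W \not\subset \fp_i$. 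Thus $V \not\subset \fp_i$ and $W \not\subset \fp_i$ while $VW \subset \fq \subset \fp_i$, contradicting the fact that $\fp_i$ is $\GL$-prime. Hence $\fq$ is $\GL$-prime; and $\fq \subset \fp$ because each $\fp_i \subset \fp$ (the chain lies in $S$). So $\fq \in S$ is an upper bound for the chain, and the empty chain is bounded by $\fp$.

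Applying Zorn's lemma, $S$ has a maximal element, which is a minimal $\GL$-prime of $A$ contained in $\fp$. Since $\fp$ was an arbitrary $\GL$-prime, this proves the proposition.

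There is no serious obstacle here; the argument is entirely parallel to the classical case. The only point worth a moment's care is that the intersection of a chain of $\GL$-primes is again $\GL$-prime, and this works out because non-containment in the intersection means non-containment in some member of the chain, and within a chain the two non-containments (for $V$ and for $W$) can be funneled into a single ideal $\fp_i$, at which point $\GL$-primality of $\fp_i$ is contradicted. Note that, unlike in Proposition~\ref{prop:rad-int}, no finiteness hypothesis on $V$ and $W$ is needed.
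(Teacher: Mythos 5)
Your proof is correct and follows the same route as the paper: apply Zorn's lemma to the $\GL$-primes contained in the given one, the key point being that the intersection of a descending chain of $\GL$-primes is again $\GL$-prime. You simply spell out the chain-intersection argument (funneling both non-containments into the smaller ideal of the chain), which the paper dismisses as clear; this is fine and no finiteness hypotheses are needed, as you note.
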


\begin{proof}
An intersection of a descending chain of $\GL$-primes is clearly $\GL$-prime, so the claim follows from Zorn's lemma.
\end{proof}

The above concepts ($\GL$-prime, $\GL$-radical, etc.) are defined using only the language of the tensor category $\Rep^{\pol}(\GL)$. It follows that the same definitions can be made in $\Rep^{\pol}(\GL_{\infty|\infty})$ and $\Pol$, and that the definitions agree on objects that correspond under the equivalences. Thus the above propositions also hold in all three settings. In fact, one can formulate and prove these results for commutative algebra objects in quite general tensor categories.

One important construction that \emph{cannot} be formulated using only the language of the tensor category is the ordinary radical. Suppose $A$ is a tca in $\Pol$ and $I$ is a $\GL$-ideal in it. We can then consider their incarnations $I(\bQ^{\infty}) \subset A(\bQ^{\infty})$ in $\Rep^{\pol}(\GL_{\infty})$, and form $\rad(I(\bQ^{\infty}))$. Similarly, we can consider their incarnations in $\Rep^{\pol}(\GL_{\infty|\infty})$ and form $\rad(I(\bQ^{\infty|\infty}))$. There is no reason to expect these two radicals to be comparable in any way (except in tautological ways, e.g., both contain $I$). In fact, the point of this paper is that they really are not comparable, and the construction is better behaved on the super side.

\subsection{Minimal primes}

We require the following result.

\begin{proposition} \label{prop:min-prime}
Let $A$ be a tca in $\Rep^{\pol}(\GL_{\infty})$ and let $\fp$ be a minimal prime of $A$. Then $\fp$ is $\GL$-stable.
\end{proposition}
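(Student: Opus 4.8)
The plan is to show that $\fp$ is a subcomodule for the $\GL_n$-action on $A$, for every $n \ge 1$; since a subrepresentation of $A$ amounts to a $\bQ$-subspace that is a $\GL_n$-subcomodule for all $n$, doing this for all $n$ gives the result. So I would fix $n$ and let $\rho \colon A \to A \otimes_\bQ \bQ[\GL_n]$ be the coaction, which is a homomorphism of $\bQ$-algebras because $A$ is an algebra object. Then I would form the composite $\bar\rho \colon A \to (A/\fp) \otimes_\bQ \bQ[\GL_n]$ of $\rho$ with the map induced by the quotient $q \colon A \to A/\fp$, and set $\fp' = \ker(\bar\rho)$, an ideal of $A$.

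The crucial observation is that $(A/\fp) \otimes_\bQ \bQ[\GL_n]$ is an integral domain, so that $\fp'$ is prime. This is where the connectedness of $\GL_n$ enters: writing $\bQ[\GL_n] = \bQ[x_{ij}]_{1 \le i,j \le n}[\det^{-1}]$, its base change along any field extension of $\bQ$ is again a domain (it is a localization of a polynomial ring over a field), so $\GL_n$ is geometrically integral; consequently, for any $\bQ$-algebra domain $R$, the ring $R \otimes_\bQ \bQ[\GL_n]$ injects into $\operatorname{Frac}(R) \otimes_\bQ \bQ[\GL_n]$ (using that $\bQ[\GL_n]$ is free, hence flat, over $\bQ$) and is therefore a domain. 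Applying this with $R = A/\fp$ gives the claim, and hence that $\fp'$ is prime.

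Next I would check $\fp' \subseteq \fp$: applying $\mathrm{id} \otimes \epsilon$ to $\bar\rho$, where $\epsilon$ is the counit of $\bQ[\GL_n]$, recovers the quotient map $q$ by the counit axiom for comodules, so $\fp' = \ker(\bar\rho) \subseteq \ker(q) = \fp$. Since $\fp$ is a \emph{minimal} prime and $\fp'$ is a prime contained in it, minimality forces $\fp' = \fp$. Therefore $\bar\rho(\fp) = 0$, i.e.\ $\rho(\fp)$ lies in the kernel of $q \otimes \mathrm{id}$, which (again by flatness of $\bQ[\GL_n]$) equals $\fp \otimes_\bQ \bQ[\GL_n]$. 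This is precisely the statement that $\fp$ is a $\GL_n$-subcomodule of $A$, and the proof is complete.

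I expect the main — indeed essentially the only nonformal — point to be the domain property of $(A/\fp) \otimes_\bQ \bQ[\GL_n]$, equivalently the geometric irreducibility of $\GL_n$. This is genuinely needed: a disconnected group could permute the minimal primes nontrivially, so some connectedness input is unavoidable. Everything else is a formal manipulation with the comodule axioms; in particular the argument uses neither a noetherian hypothesis nor polynomiality of $A$, and the same proof would apply to any commutative algebra object carrying compatible $\GL_n$-comodule structures.
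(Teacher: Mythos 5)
Your proof is correct and takes essentially the same route as the paper: both form the composite $(\pi \otimes \id) \circ \Delta \colon A \to (A/\fp) \otimes \bQ[\GL_n]$, observe its target is a domain because $\bQ[\GL_n]$ is a localization of a polynomial ring, conclude the kernel is a prime contained in $\fp$ (via the counit axiom), invoke minimality to get equality, and deduce $\Delta(\fp) \subset \fp \otimes \bQ[\GL_n]$. Your appeal to flatness for identifying $\ker(\pi \otimes \id)$ with $\fp \otimes \bQ[\GL_n]$ is just a cleaner packaging of the paper's explicit computation with linearly independent $b_i$.
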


\begin{proof}
Consider the maps
\begin{displaymath}
\xymatrix@C=1.5cm{
A \ar[r]^-{\Delta} &
A \otimes \bQ[\GL_n] \ar[r]^-{\pi \otimes \id} &
A/\fp \otimes \bQ[\GL_n] \ar[r]^-{\id \otimes \epsilon} &
A/\fp }
\end{displaymath}
where $\Delta$ is comultiplication, $\pi \colon A \to A/\fp$ is the quotient map, and $\epsilon \colon \bQ[\GL_n] \to \bQ$ is the counit. The composition is equal to $\pi$ by the axioms for a comodule, and thus has kernel $\fp$. We thus see that $\fq=\ker((\pi \otimes \id) \circ \Delta) \subset \fp$. However, $\fp$ is prime and $\bQ[\GL_n]$ is a localization of a polynomial algebra over $\bQ$, and so $A/\fp \otimes \bQ[\GL_n]$ is a domain. Thus $\fq$ is prime. Since $\fp$ is minimal, we must have $\fp=\fq$. Now, let $x$ be an element of $\fp$, and write $\Delta(x) = \sum_{i=1}^n a_i \otimes b_i$ where $a_i \in A$ and $b_i \in \bQ[\GL_n]$ are $\bQ$-linearly independent elements. Then $0=(\pi \otimes \id)(\Delta(x))=\sum_{i=1}^n \pi(a_i) \otimes b_i$. Since the $b_i$ are linearly independent, it follows that $\pi(a_i)=0$ for all $i$, and so $a_i \in \fp$ for all $i$. Thus $\Delta(\fp) \subset \fp \otimes \bQ[\GL_n]$, and so $\fp$ is $\GL_n$-stable. Since this holds for all $n$, it follows that $\fp$ is $\GL_{\infty}$-stable.
\end{proof}

\begin{remark}
The analog of this statement for tca's in $\Rep^{\pol}(\GL_{\infty|\infty})$ does not hold: the above proof fails since $\bQ[\GL_{r|s}]$ is not a domain.
\end{remark}

\subsection{Radicals of $\GL$-primes}

We require the following result on $\GL$-primes. See \cite[\S 8.6]{expos} for some similar results.

\begin{proposition} \label{prop:rad-prime}
Let $A$ be a tca in $\Rep^{\pol}(\GL_{\infty|\infty})$ and let $\fp$ be a $\GL$-prime of $A$. Then $\rad(\fp)$ is prime.
\end{proposition}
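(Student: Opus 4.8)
The plan is to reduce to the commutative-algebra statement by exploiting the super structure, specifically the fact that $\GL_{\infty|\infty}$ contains a large even-odd symmetry. The key observation is that a $\GL$-prime $\fp$ need not be prime as an ordinary ideal, but the obstruction is ``nilpotent'' in a controlled way: if $fg \in \fp$ but $f, g \notin \fp$, then the $\GL$-orbits (more precisely, the subrepresentations $V_f, V_g$ generated by $f$ and $g$) satisfy $V_f V_g \subset \rad(\fp)$ but, we must show, $V_f \subset \rad(\fp)$ or $V_g \subset \rad(\fp)$ — which is exactly the statement that $\rad(\fp)$ is $\GL$-prime (hence, by Proposition~\ref{prop:rad-int}, it will follow that $\rad(\fp)$ is an intersection of $\GL$-primes; but to get that it is literally prime we need more). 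So the real content is: $\rad(\fp)$ is $\GL$-prime \emph{and} $\GL$-stable primes that equal their own radical behave like ordinary primes here.

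First I would recall that $\rad(\fp)$ is automatically $\GL$-stable — this is not Proposition~\ref{prop:min-prime} directly, but follows because $\GL$ acts by algebra automorphisms so $\rad$ of a $\GL$-ideal is a $\GL$-ideal. Next, I would show $\rad(\fp)$ is $\GL$-prime: if $VW \subset \rad(\fp)$ for finite-length subrepresentations, then $(VW)^n \subset \fp$ for some $n$ (using that $VW$ is finite length and the Proposition on radicals), hence $V^n W^n \subset \fp$ up to reordering factors, i.e. $(V^n)(W^n) \subset \fp$; since $\fp$ is $\GL$-prime, $V^n \subset \fp$ or $W^n \subset \fp$, hence $V \subset \rad(\fp)$ or $W \subset \rad(\fp)$. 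So $\rad(\fp)$ is a $\GL$-radical $\GL$-prime.

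The crux — and the step I expect to be the main obstacle — is upgrading ``$\GL$-prime and radical'' to ``prime'' in the super setting. In the purely even setting this is false (Example~\ref{ex:main}), so the proof must genuinely use the odd variables. The idea I would pursue: suppose $f, g$ are honest elements of $A(\bQ^{\infty|\infty})$ with $fg \in \rad(\fp)$. Decompose $f = f_0 + f_1$ and $g = g_0 + g_1$ into even and odd parts; since $\fp$ is $\GL$-stable and hence stable under the parity automorphism (which lies in the closure of $\GL_{0|0} \times \GL$ acting on $\bQ^{\infty|\infty}$, or can be realized by scaling the odd coordinates by $-1$), the ideal $\rad(\fp)$ is homogeneous for the $\bZ/2$-grading, so we may assume $f, g$ are homogeneous. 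Odd elements square into the even part and any odd element satisfies $f^2$ even; crucially, a finitely generated $\GL$-subrepresentation generated by an odd highest-weight vector, when we apply enough of the odd part of $\gl_{\infty|\infty}$, produces even vectors whose product structure is governed by a genuine commutative ring — the point being that $\bQ^{\infty|\infty}$ is ``big enough'' that $\bS_\lambda(\bQ^{\infty|\infty})$ is nonzero for all $\lambda$ (contrast Proposition~\ref{prop:schur-eval} in finite rank), so no representation-theoretic vanishing can hide a would-be zero divisor. I would make this precise by: (i) reducing to finitely generated $\fp$ and finite-dimensional $\bQ^{r|s}$, (ii) showing that for $r = s$ large the even subalgebra generated by squares of odd generators, together with the even generators, detects products, and (iii) concluding that $fg \in \rad(\fp)$ with $f, g \notin \rad(\fp)$ would force, after passing to these squares and using $\GL$-primeness of $\fp$, a contradiction with $\rad(\fp)$ being $\GL$-radical. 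The delicate part is controlling the interaction between the $\GL_{\infty|\infty}$-action and the ring structure on an individual finite-rank fiber $A(\bQ^{r|s})$, since primeness is a statement about a fixed such fiber (or about $A(\bQ^{\infty|\infty})$ directly) while $\GL$-primeness is ``uniform'' across all fibers; I expect the resolution to hinge on the observation that an element $f \in A(\bQ^{r|s})$ not in $\rad(\fp)$ remains outside $\rad(\fp)$ after a generic $\GL_{r|s}$-translate, and that one can choose the translate to move $f$ into a position where its leading term (in a suitable filtration by the odd variables) is even and generates, under $\GL$, a subrepresentation detected by $\fp$.
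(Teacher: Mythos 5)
The crux of your proposal is exactly where it stops being a proof. After the (correct) easy reductions, what must be shown is: for even elements $x,y$ with $xy$ nilpotent modulo $\fp$, either $x$ or $y$ is nilpotent modulo $\fp$. The only way to bring the hypothesis ``$\fp$ is $\GL$-prime'' to bear on a pair of \emph{elements} is to pass from the element product $xy$ to the product of the $\GL$-ideals they generate, i.e.\ to show something like $\langle x^n\rangle\cdot\langle y\rangle\subset\fp$ for some $n$. Your steps (i)--(iii) (squares of odd generators, $r=s$ large, generic translates, leading terms in the odd variables) never supply such a mechanism, and nothing in them forces the $\GL$-translates $(gx)(hy)$ into $\rad(\fp)$. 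The paper's proof consists precisely of this missing ingredient: an inductive Lie-algebra argument (apply $E\in\gl_{\infty|\infty}$ to $x^n\cdot ay=0$, multiply by $x$ to get $x^{n+1}\cdot Eay=0$; Lemma~\ref{lem:rad-prime-4}) combined with a disjoint-support argument showing that $xy'=0$ for a generator $y'$ of $\langle y\rangle$ disjoint from $x$ already gives $\langle x\rangle\cdot\langle y\rangle=0$ (Lemma~\ref{lem:rad-prime-3}). Without an equivalent of these lemmas the argument does not close. Note also that your guiding intuition is off: this proposition is not a genuinely ``super'' phenomenon, and Example~\ref{ex:main} does not show failure in the purely even setting (there $\rad(0)=A_+$ \emph{is} prime; what fails in the even setting is the comparison of $\rad$ with $\rad_{\GL}$, i.e.\ Theorem~\ref{thm1}). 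After the trivial reduction all relevant elements are even, so ``using the odd variables'' is not where the difficulty lies.

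There is also a gap in your intermediate step that $\rad(\fp)$ is $\GL$-prime. From $VW\subset\rad(\fp)$ you infer $(VW)^n\subset\fp$ by citing the proposition on radicals, but that proposition concerns finite length subrepresentations of $\rad_{\GL}(I)$, not of the \emph{ordinary} radical; since $\fp$ is $\GL$-prime one has $\rad_{\GL}(\fp)=\fp$, so that proposition gives nothing here. Passing from ``every element of $VW$ is nilpotent mod $\fp$'' to ``$(VW)^n\subset\fp$'' is exactly the nontrivial content of Theorem~\ref{thm:nil} and Corollary~\ref{cor:nil}, which you could legitimately invoke (their proofs are independent of this proposition), but which you do not. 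Even after repairing this, knowing that $\rad(\fp)$ is $\GL$-prime and equal to its own radical does not formally yield primeness: that implication, applied to $\fq=\rad(\fp)$, \emph{is} the proposition you are trying to prove, so your plan reduces the statement to one of the same strength and the burden falls back on the missing lemmas above.
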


We require some preliminary work before proving the proposition. Let $\{e_i\}_{i \in \cI}$ be a homogeneous basis for $\bV=\bQ^{\infty|\infty}$. Given an element $x$ in a polynomial representation $V$ of $\GL_{\infty|\infty}$ and a subset $S$ of $\cI$, we say that $x$ has \emph{support contained in $S$} if $V$ can be embedded into a direct sum of tensor powers of $\bQ^{\infty|\infty}$ such that $x$ can be expressed using the basis elements in $\cI$. One can define this more canonically by looking at the weight decomposition of $x$. We define the \emph{support} of an element $X$ of $\gl_{\infty|\infty}$ to be the set of indices $i$ such that $X$ has a non-zero entry in row $i$ or column $i$. We say that an element of $\cU(\fgl_{\infty|\infty})$ has \emph{support contained in $S$} if it can be expressed in terms of elements of $\fgl_{\infty|\infty}$ having this property. We say that elements of representations or $\cU(\fgl_{\infty|\infty})$ are \emph{disjoint} if they have disjoint supports (or if they have supports contained in disjoint sets). For an element $x \in A$, we let $\langle x \rangle$ be the $\GL$-ideal it generates.

\begin{lemma} \label{lem:rad-prime-3}
Suppose $x,y \in A$ are disjoint elements such that $xy=0$. Then $\langle x \rangle \cdot \langle y \rangle = 0$.
\end{lemma}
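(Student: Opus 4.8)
The plan is to leverage the $\GL$-primality of the zero ideal is \emph{not} what we have; rather, we exploit that $x$ and $y$ are disjoint to build an element of $\GL_{\infty|\infty}$ (or of the universal enveloping algebra) that moves $x$ without touching $y$, thereby multiplying out arbitrary elements of $\langle x\rangle$ against arbitrary elements of $\langle y\rangle$ inside the product $xy=0$. Concretely, since $\bV=\bQ^{\infty|\infty}$ is infinite-dimensional in both the even and odd parts, we may choose, for the finite support sets of $x$ and $y$, a third finite set of indices $T$ disjoint from both, of the same ``super-shape'' as the support of $x$, together with an element $g$ of $\GL_{\infty|\infty}$ that carries the support of $x$ isomorphically onto $T$ and fixes the support of $y$ pointwise. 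Then $gx$ is disjoint from $x$, from $y$, and from $gy = y$.

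First I would reduce to showing $u \cdot v = 0$ for $u \in \langle x\rangle$ and $v \in \langle y\rangle$ homogeneous; since $\langle x\rangle$ is spanned by elements of the form $a \cdot (\xi x)$ with $a \in A$ and $\xi \in \cU(\gl_{\infty|\infty})$ (and similarly for $\langle y\rangle$), and the ideal product is generated by such pairwise products, it suffices to show $(a\xi x)(b\eta y) = 0$ for all $a,b \in A$ and all $\xi,\eta \in \cU(\gl_{\infty|\infty})$. Pulling the commutative factors $a,b$ out, the real content is $(\xi x)(\eta y) = 0$. Second, I would observe that $\xi$ has some finite support $S_\xi$ and $\eta$ some finite support $S_\eta$; enlarging the support of $x$ to $S_x \cup S_\xi$ and of $y$ to $S_y \cup S_\eta$ does not destroy disjointness only if these enlargements stay disjoint — which is \emph{not} automatic. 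So instead I would first conjugate: choose $g \in \GL_{\infty|\infty}$ supported away from everything relevant so that $g$ moves $\operatorname{supp}(x) \cup S_\xi$ off of $\operatorname{supp}(y) \cup S_\eta$, while the hypothesis $xy = 0$ is preserved under applying $g$ to the whole identity (as $g$ acts by a ring automorphism), giving $(gx)(gy) = 0$; now $\operatorname{Ad}(g)\xi$ acts on $gx$ the way $\xi$ acted on $x$. After this reduction I can assume $\xi x$ and $\eta y$ already have disjoint supports.

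Third, and this is the crux, I must pass from $xy = 0$ (disjoint $x,y$) to $(\xi x)(\eta y) = 0$ (disjoint). Here I would use that $\cU(\gl_{\infty|\infty})$ acts by derivations relative to the commutative multiplication on $A$: for $X \in \gl_{\infty|\infty}$ we have $X(pq) = (Xp)q \pm p(Xq)$, and if $X$ is supported in $S$ while $q$ is supported off $S$ then $Xq = 0$. Thus if $X_1, \dots, X_k$ are all supported in $\operatorname{supp}(x)$ (disjoint from $\operatorname{supp}(y)$), applying $X_1 \cdots X_k$ to the relation $xy = 0$ and expanding by the (super-)Leibniz rule, every term in which some $X_j$ hits $y$ vanishes, leaving $(X_1\cdots X_k x)\, y = 0$; by linearity $(\xi x) y = 0$ for any $\xi$ supported in $\operatorname{supp}(x)$. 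But a general $\xi \in \cU(\gl_{\infty|\infty})$ need not be supported there — which is exactly why the conjugation step above was arranged to force $\xi x$ and $\eta y$ disjoint at the outset; write $x' = \xi x$, $y' = \eta y$, so $x', y'$ are disjoint and I want $\langle x'\rangle$... no, I want $x' y' = 0$, and here I do \emph{not} yet know $x'y' = 0$. The honest fix: re-run the Leibniz argument in the other direction too. Having $(\xi x) y = 0$ whenever $\xi$ is supported in $\operatorname{supp}(x)$, and wanting to also move $y$, I apply $\eta$ (now supported in $\operatorname{supp}(y)$, disjoint from $\operatorname{supp}(\xi x) = \operatorname{supp}(x)$) to the relation $(\xi x) y = 0$; by the same Leibniz computation $\eta$ passes through $\xi x$ freely, yielding $(\xi x)(\eta y) = 0$. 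Finally, reinstating $a$ and $b$: $(a \xi x)(b \eta y) = ab \cdot (\xi x)(\eta y) = 0$. Since products of such elements span $\langle x\rangle \cdot \langle y\rangle$, we conclude $\langle x\rangle\langle y\rangle = 0$.

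The main obstacle is the bookkeeping in the conjugation step: making precise that $\bQ^{\infty|\infty}$ has enough ``room'' (in both even and odd directions, and respecting the weight/support formalism introduced before the lemma) to find $g \in \GL_{\infty|\infty}$ moving a given finite support set off another given finite support set while fixing a third, and checking that $\operatorname{Ad}(g)$ carries an element of $\cU(\gl_{\infty|\infty})$ with support $S$ to one with support $g(S)$. Everything else is the super-Leibniz rule plus the trivial observation that a derivation supported in $S$ kills anything supported off $S$. I would isolate the ``room'' statement as a one-line sublemma about the basis $\{e_i\}_{i\in\cI}$ and the partition of $\cI$ into its even and odd parts, each countably infinite.
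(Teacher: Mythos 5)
Your proposal is correct and follows essentially the same route as the paper: both arguments rest on the facts that an element of $\cU(\gl_{\infty|\infty})$ whose support is disjoint from that of an element of $A$ kills it (hence passes through that factor via the Leibniz rule), and that $\bQ^{\infty|\infty}$ has enough room to conjugate by a parity-preserving permutation of the basis so that all data on the $x$-side becomes disjoint from all data on the $y$-side. The only difference is cosmetic: you treat the two factors symmetrically after a single conjugation, while the paper first proves $x \cdot \langle y \rangle = 0$ (handling elements of $\langle y \rangle$ not disjoint from $x$ by a separate transport step) and then concludes.
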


\begin{proof}
Let $y'$ be an element of $\langle y \rangle$ that is disjoint from $x$. Write $y'=ay$ where $a \in \cU(\fgl_{\infty|\infty})$. Now, the support of $a$ may overlap with that of $x$, that is, we may use auxiliary basis vectors in the process of building $y'$ from $y$. However, it does not matter which auxiliary basis vectors we use, so we can modify $a$ if necessary so that it  is disjoint from $x$. More rigorously, choose a permutation $\sigma$ of $\cI$ that fixes the supports of $y$ and $y'$, and such that $\sigma a \sigma^{-1}$ is disjoint from $x$. Then $y'=\sigma a \sigma^{-1} y$ and $\sigma a \sigma^{-1}$ is disjoint from $x$. Now, applying $a$ to the expression $xy=0$, and using the fact that $a$ commutes with $x$ since it is disjoint from $x$, we find $x (ay)=0$, that is, $xy'=0$.

Now let $y'$ be an arbitrary element of $V$. We can then write $y'=\sigma y''$ where $\sigma$ is a permutation of $\cI$ and $y''$ is disjoint from $x$. Let $E \in \gl_{\infty} \times \gl_{\infty}$ act by $\sigma$ on the support of $y''$ and~0 on the remaining basis vectors. Then $Ex=0$ and $Ey''=y'$. Since $y''$ is disjoint from $x$, we have $x y''=0$ by the previous paragraph. Applying $E$ to this equation gives $x y'=0$. This completes the proof.
\end{proof}

\begin{lemma} \label{lem:rad-prime-4}
Let $x,y \in A$ be super homogeneous elements satisfying $xy=0$. Then there exists $n \ge 0$ such that $\langle x^n \rangle \cdot \langle y \rangle =0$.
\end{lemma}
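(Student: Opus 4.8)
The strategy is to reduce everything to Lemma~\ref{lem:rad-prime-3}, which handles the case of disjoint supports; the power $x^{n}$ is needed precisely to manufacture such a disjoint configuration.

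\emph{First reductions.} If $x$ is super homogeneous of odd degree, then super-commutativity gives $x\cdot x=-x\cdot x$, so $x^{2}=0$ and $n=2$ works; hence we may assume $x$ is even. Next, let $V$ and $W$ be the subrepresentations of $A$ generated by $x^{n}$ and by $y$. Since $\langle x^{n}\rangle=A\cdot V$ and $\langle y\rangle=A\cdot W$, we get $\langle x^{n}\rangle\langle y\rangle=A\cdot(VW)$, so it suffices to show $VW=0$, i.e. $(\alpha x^{n})(\beta y)=0$ for all $\alpha,\beta\in\cU(\fgl_{\infty|\infty})$. Induction on the number of $\fgl$-factors of $\alpha$, via $\xi(x^{n}\cdot\beta y)=(\xi x^{n})(\beta y)\pm x^{n}(\xi\beta\cdot y)$, reduces this to the single statement that $x^{n}$ annihilates $W$, i.e. $x^{n}(\beta y)=0$ for every $\beta$. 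Finally observe $x^{n}y=(x^{n-1}x)y=x^{n-1}(xy)=0$ for all $n\ge 1$.

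\emph{Main step.} Set $S=\operatorname{supp}(x)$, $T=\operatorname{supp}(y)$ (finite subsets of $\cI$) and $U=S\cap T$. If $U=\varnothing$, then $x$ and $y$ are disjoint and Lemma~\ref{lem:rad-prime-3} gives $\langle x\rangle\langle y\rangle=0$ with $n=1$. For $U\ne\varnothing$ we pass to a large power $k$ of $x$ and apply operators from $\cU(\fgl_{\infty|\infty})$ to the relation $x^{k}y=0$ in order to move $x^{k}$ off of $U$. For $u\in U$ and a fresh index $j$ of the same parity, the span of $E_{uj}$, $E_{ju}$, $E_{uu}-E_{jj}$ is a copy of $\mathfrak{sl}_{2}$ for which $x^{k}$ and $y$ (having weight $0$ at $j$) are highest-weight vectors, so $E_{ju}^{\ell}x^{k}$ generates the same $\GL$-ideal $\langle x^{k}\rangle$ once $\ell$ is the top $u$-weight of $x^{k}$, and similarly for $y$. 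Taking $k$ large makes the top $u$-weight of $x^{k}$ dominate the corresponding weight of $y$ — which is bounded by $\maxdeg F$ for a finite-length subobject $F\subseteq A$ containing $y$ — so that in the Leibniz expansion of $E_{ju}^{\ell}(x^{k}y)=0$ the term pairing ``$x^{k}$ with all its $u$-weight pushed away'' with ``$y$ essentially untouched'' can be singled out; this yields a relation $z\,y'=0$ with $\langle z\rangle=\langle x^{k}\rangle$, $\langle y'\rangle=\langle y\rangle$ and a smaller overlap. Iterating over $U$ reaches the disjoint case, and Lemma~\ref{lem:rad-prime-3} then gives $\langle x^{k}\rangle\langle y\rangle=\langle z\rangle\langle y'\rangle=0$.

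\emph{Main obstacle.} The real difficulty is entirely in the reduction to disjoint supports. Translating $x$ by $\GL_{\infty|\infty}$ only relocates the overlap $U$ without shrinking it, so the power is essential; and the delicate point is that every element of $\cU(\fgl_{\infty|\infty})$ acts on the product $x^{k}y$ through \emph{both} factors, so ``pushing $x^{k}$ off $u$'' threatens to drag $y$ along. Making this separation work — arranging, through the weight disparity created by the large power, that the term with $y$ undisturbed is the only one that survives, while keeping the $\mathfrak{sl}_{2}$-theoretic control that the modified elements still generate $\langle x^{k}\rangle$ and $\langle y\rangle$ — is where the substance of the proof lies; it is conceivable that a slicker argument avoids literal disjointness, but this is the step I expect to require care. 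The remaining ingredients (super-commutativity sign bookkeeping, and the reduction of the first paragraph to ``$x^{n}$ annihilates $W$'') are routine.
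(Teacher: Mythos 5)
There is a genuine gap, and it sits exactly where you yourself flagged the difficulty: the step that ``singles out'' the term in which $y$ is untouched. Look at the Leibniz expansion $0=E_{ju}^{\ell}(x^{k}y)=\sum_{i}\binom{\ell}{i}\,(E_{ju}^{i}x^{k})(E_{ju}^{\ell-i}y)$, and suppose for simplicity that $x$ and $y$ are weight vectors, with $x^{k}$ of $u$-weight $m$ and $y$ of $u$-weight $b\ge 1$ (that is what $u$ lying in both supports means). The terms that survive for weight reasons are exactly those with $\ell-b\le i\le\min(\ell,m)$; so for any $\ell\le m$ (in particular your choice $\ell=m$) there are about $b+1$ surviving terms, a number governed by $y$ alone and completely independent of $k$. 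Moreover all of these terms have the same $\GL_{\infty|\infty}$-weight, so they cannot be separated by weight considerations: the disparity created by taking $k$ large never isolates the term $(E_{ju}^{\ell}x^{k})\cdot y$, and the displayed relation does not give $(E_{ju}^{\ell}x^{k})\,y=0$. If instead you lower by the full amount $\ell=m+b$ so that only one term survives, that term is $(E_{ju}^{m}x^{k})(E_{ju}^{b}y)$, in which \emph{both} factors have acquired the fresh index $j$: the overlap is relocated from $u$ to $j$, not reduced, which is precisely the ``dragging $y$ along'' obstacle you identified but did not overcome. (A secondary problem: $\langle E_{ju}^{\ell}x^{k}\rangle=\langle x^{k}\rangle$ requires $x^{k}$ to be a $u$-weight vector; a super homogeneous $x$ need not be one, and lowering retains only its top $u$-weight component.)

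The paper avoids this entirely by never trying to separate supports inside the relation. It proves, by induction on the length of a word $a\in\cU(\fgl_{\infty|\infty})$, that $x^{n}\cdot(ay)=0$ for some $n$ \emph{depending on $a$}: apply a single super homogeneous $E$ to $x^{n}\cdot ay=0$ and multiply the result by $x$, which kills the term where $E$ hit $x^{n}$ and yields $x^{n+1}\cdot(Eay)=0$. It then chooses one specific $a$ (coming from a parity-preserving permutation of the basis) for which $y'=ay$ is disjoint from $x$ and still generates $\langle y\rangle$, and concludes by applying Lemma~\ref{lem:rad-prime-3} to $x^{n}$ and $y'$. Note that this route only needs an exponent depending on $a$, so your first paragraph's reduction to a uniform $n$ with $x^{n}(\beta y)=0$ for all $\beta$ is stronger than necessary and is not established there (the closing observation $x^{n}y=0$ only treats $\beta=1$); the odd-$x$ remark and the reduction to $VW=0$ are fine but end up unused by your main step, whose disjointness mechanism is the part that fails.
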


\begin{proof}
We claim that for any $a \in \cU(\gl_{\infty|\infty})$ there exists $n \ge 0$ such that $x^n \cdot ay=0$. This is clear for $a=1$. Suppose now it is true for $a$, and let us prove it for $Ea$, with $E \in \gl_{\infty|\infty}$. It suffices to treat the case where $E$ is super homogeneous. Let $n$ be such that $x^n \cdot ay=0$. Applying $E$, we find $n x^{n-1} Ex \cdot ay \pm x^n \cdot Eay=0$, where the sign depends on super degrees. Multiplying by $x$ and using the fact that $x^n \cdot ay=0$, we find $x^{n+1} \cdot Eay=0$. The claim now follows.

Now let $y' \in \langle y \rangle$ be disjoint from $x$ and generate $\langle y \rangle$; for instance, one could take $y'=\sigma y$ for an appropriate permutation $\sigma$ of $\cI$. Since $y'=ay$ for some $a \in \cU(\fgl_{\infty|\infty})$, the previous paragraph gives $x^n y'=0$ for some $n$. By Lemma~\ref{lem:rad-prime-3}, we find $\langle x^n \rangle \cdot \langle y \rangle=0$, and so the result follows.
\end{proof}

\begin{proof}[Proof of Proposition~\ref{prop:rad-prime}]
Passing to $A/\fp$, we assume $\fp=0$ is $\GL$-prime. We must show that $\rad(A)$ is prime. Since all odd elements of $A$ are nilpotent, we have $A_1 \subset \rad(A)$. It thus suffices to show that if $xy \in \rad(A)$ with $x$ and $y$ even then $x \in \rad(A)$ or $y \in \rad(A)$. Thus let even elements $x$ and $y$ be given such that $xy$ is nilpotent, say $(xy)^k=0$. Since $x$ and $y$ are even, they commute, and so $x^ky^k=0$. By Lemma~\ref{lem:rad-prime-4}, there exists $n \ge 0$ such that $\langle x^{nk} \rangle \cdot \langle y^k \rangle =0$. Since $(0)$ is $\GL$-prime, it follows that $x^{nk}=0$ or $y^k=0$. Thus either $x$ or $y$ is nilpotent, which completes the proof.
\end{proof}

\subsection{Draisma's theorem}

Suppose that a group $G$ acts on a topological space $X$. We say that $X$ is \emph{$G$-noetherian} if every descending chain of closed $G$-stable subsets stabilizes. Draisma \cite[Corollary~3]{draisma} proved the following important theorem in this context:

\begin{theorem} \label{thm:draisma}
Let $A$ be a tca in $\Rep^{\pol}(\GL_{\infty})$ such that $A_0$ is noetherian and $A$ is finitely generated over $A_0$. Then $\Spec(A)$ is $\GL_{\infty}$-noetherian.
\end{theorem}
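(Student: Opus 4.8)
This is Draisma's theorem, which we do not reprove; the plan below indicates how one argues, following \cite{draisma}, in two parts: reduce the statement to a purely functorial one, then prove the latter by induction on degree. For the reduction, first choose a surjection $A_0 \otimes \Sym(E) \to A$ with $E$ a finite length polynomial representation, which exists since $A$ is finitely $\GL$-generated over $A_0$; as a descending chain of $\GL$-stable closed subsets of $\Spec(A)$ is in particular such a chain in $\Spec(A_0 \otimes \Sym(E))$, we may replace $A$ by $A_0 \otimes \Sym(E)$. A standard noetherian induction on $\Spec(A_0)$ --- using generic flatness, so that the situation over the generic point of an integral closed subscheme governs a dense open, and applying the inductive hypothesis to the complementary closed subscheme --- reduces us to $A_0 = k$ a field, which is infinite because it contains $\bQ$. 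Finally, the equivalence $\Rep^{\pol}(\GL_{\infty}) \simeq \Pol$ identifies the $\GL$-stable closed subsets of $\Spec(A)$ with the $\GL_{\infty}$-stable closed subsets of $P(k^{\infty}) = \bigcup_n P(k^n)$ for a suitable finite length polynomial functor $P$. So it suffices to show: for every finite length polynomial functor $P$ over an infinite field $k$, the space $P(k^{\infty})$ is $\GL_{\infty}$-noetherian.

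One proves this by induction on $d = \deg P$, carrying along the elementary fact that $\GL_{\infty}$-noetherianity is inherited by $\GL_{\infty}$-stable closed subvarieties, by finite unions, by products with an affine space with trivial action, by images of $\GL_{\infty}$-equivariant morphisms, and ``downward'' along $\GL_{\infty}$-equivariant morphisms whose fibres are affine spaces. The base case $d \le 1$ is elementary: $P(k^{\infty})$ is then a product of finitely many copies of $k^{\infty}$ with an affine space, and its $\GL_{\infty}$-orbits are parametrized by a finite disjoint union of ordinary Grassmannian-type varieties, so $\GL_{\infty}$-noetherianity is immediate. For $d \ge 2$, one first uses the closed $\GL$-embedding of $P$ into a finite direct sum of tensor powers, together with the inheritance properties, to reduce to the case that $P$ is a finite direct sum of Schur functors; the essential remaining content is then Draisma's \emph{shift construction}.

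For the shift construction, write $k^{\infty} = k e_1 \oplus W$ with $W \cong k^{\infty}$, so that $P(k^{\infty}) = \bigoplus_{j \ge 0} (\partial_j P)(W)$ with $\partial_0 P = P$ and $\deg \partial_j P = d - j$, and the parabolic subgroup of $\GL_{\infty}$ stabilizing the line $k e_1$ acts. On a suitable dense open subset $U$ --- roughly, the locus where a ``leading coefficient'' assembled from the pieces $\partial_j P$ with $j > 0$ is nondegenerate --- the action of this parabolic together with a polarization map identifies $U$, up to $\GL_{\infty}$-equivariant morphisms and extra affine and torus factors, with finitely many spaces $Q(k^{\infty})$ having $\deg Q < d$; hence $U$ is $\GL_{\infty}$-noetherian by the inductive hypothesis and the inheritance properties. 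The complement of $U$ is only stable under the parabolic, so one passes to its $\GL_{\infty}$-saturation $Z$, checks that $Z$ is a \emph{proper} $\GL_{\infty}$-stable closed subset (a generic element of $P(k^{\infty})$ ``uses the whole basis'' and hence avoids $Z$), and shows $Z$ is $\GL_{\infty}$-noetherian by a secondary induction on the number of top-degree Schur constituents of $P$ (or by recognizing $Z$ as governed by a strictly smaller polynomial functor). Since any $\GL_{\infty}$-stable closed subset $X$ of $P(k^{\infty})$ then decomposes as $(X \cap U) \cup (X \cap Z)$ with both pieces $\GL_{\infty}$-noetherian and $Z$ independent of $X$, the descending chain condition for $P(k^{\infty})$ follows; unwinding the earlier reductions gives the theorem.

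The whole difficulty is concentrated in the last step: producing, from a polynomial functor of degree $d$, a finite list of polynomial functors of degree $< d$ that $\GL$-equivariantly ``cover'' it away from a proper $\GL$-stable closed subset, and organizing the induction over direct sums of Schur functors so that some genuine invariant strictly decreases. This is the technical heart of \cite{draisma}; we know of no shortcut, and in the present paper we simply invoke that result.
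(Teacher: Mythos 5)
Your approach coincides with the paper's: Theorem~\ref{thm:draisma} is not reproved there either, but simply attributed to Draisma \cite[Corollary~3]{draisma}, with a one-line remark that the case of a general noetherian $A_0$ (rather than $A_0$ finitely generated over a field) follows from a slight modification of his argument, details omitted. Your sketch of Draisma's reduction and shift construction is a reasonable outline of the cited proof (though in \cite{draisma} the localizing ``leading coefficient'' is extracted from an equation of the given proper closed $\GL$-stable subset, so the open--closed decomposition is not independent of $X$ as your last paragraph suggests), but since both you and the paper ultimately just invoke Draisma's theorem, the proposal is fine as is.
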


In fact, Draisma only states this theorem when $A_0$ is finitely generated over a field, but a slight modification in his proof yields the above statement. Since it is not critical for this paper, we do not include details. We give a few corollaries of the theorem.

\begin{corollary} \label{cor:acc-rad}
Let $A$ be as in Theorem~\ref{thm:draisma}. Then every ascending chain of radical $\GL$-ideals in $A$ stabilizes.
\end{corollary}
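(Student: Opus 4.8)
The plan is to recast the statement geometrically and then invoke Draisma's theorem. Recall that for any commutative ring $R$ there is an inclusion-reversing bijection between the radical ideals of $R$ and the closed subsets of $\Spec(R)$, sending a radical ideal $I$ to its vanishing locus $\cV(I)$ and a closed set $Z$ to $I(Z)=\bigcap_{\fp\in Z}\fp$. I would apply this with $R=A$, which is an ordinary commutative ring.

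Next I would check that this dictionary is equivariant. Each $\GL_n$, and hence $\GL_\infty$, acts on $A$ by ring automorphisms, so it acts on $\Spec(A)$ by homeomorphisms; moreover, for $g\in\GL_\infty$ and a radical ideal $I$ one has $g\cdot\cV(I)=\cV(g\cdot I)$. Since $g$ carries radical ideals to radical ideals, it follows that a radical ideal $I$ is $\GL$-stable if and only if $\cV(I)$ is a $\GL_\infty$-stable closed subset of $\Spec(A)$. Consequently an ascending chain $I_1\subset I_2\subset\cdots$ of radical $\GL$-ideals of $A$ corresponds, under $I\mapsto\cV(I)$, to a descending chain $\cV(I_1)\supset\cV(I_2)\supset\cdots$ of closed $\GL_\infty$-stable subsets of $\Spec(A)$.

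Finally, by Theorem~\ref{thm:draisma} the space $\Spec(A)$ is $\GL_\infty$-noetherian, so the descending chain of closed $\GL_\infty$-stable subsets stabilizes; pulling back through the bijection, the original ascending chain of radical $\GL$-ideals stabilizes. There is no real obstacle here beyond this bookkeeping. The one point worth emphasizing is that the argument cannot be shortened by asserting that $A$ is a noetherian ring: in general it is not (cf.\ Example~\ref{ex:main}), so it is genuinely the equivariance of the radical-ideal/closed-set correspondence, combined with Draisma's theorem, that does the work.
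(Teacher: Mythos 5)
Your argument is correct and is exactly the paper's proof, which simply cites the bijection between radical $\GL$-ideals and $\GL$-stable closed subsets of $\Spec(A)$ and then invokes Theorem~\ref{thm:draisma}; you have merely spelled out the equivariance bookkeeping that the paper leaves implicit.
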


\begin{proof}
This follows since radical $\GL$-ideals of $A$ correspond bijectively to $\GL$-stable closed subsets of $\Spec(A)$.
\end{proof}

\begin{corollary} \label{cor:GL-fixed-space}
Let $A$ be as in Theorem~\ref{thm:draisma}. Let $Y=\Spec(A)$ and let $X=Y^{\GL}$ be the subset consisting of $\GL_{\infty}$-stable prime ideals, endowed with the subspace topology. Then $X$ is a noetherian spectral space.
\end{corollary}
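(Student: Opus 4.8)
The plan is to verify the two properties that make a noetherian topological space spectral, namely that $X$ is noetherian and that it is sober; recall that a noetherian space is spectral precisely when it is sober (a noetherian space is automatically quasi-compact and all of its open subsets are quasi-compact, so the remaining clauses of Hochster's criterion are satisfied, and any sober space is $T_0$). Both properties will be extracted from the corresponding facts for $Y=\Spec(A)$ — which is an honest prime spectrum, hence spectral — together with Draisma's theorem (Theorem~\ref{thm:draisma}).

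The one thing to set up carefully is the interaction between the subspace topology on $X$ and the $\GL$-equivariant topology on $Y$. The group $\GL_{\infty}=\bigcup_n \GL_n$ acts on $A$ by ring automorphisms, hence on $Y$ by homeomorphisms, and every point of $X=Y^{\GL}$ is by definition fixed by this action. Consequently, if $W\subseteq X$ is closed in the subspace topology, then $W$ is $\GL$-stable as a subset of $Y$, and therefore its closure $\overline{W}$ in $Y$ is $\GL$-stable as well (since $g\cdot\overline{W}=\overline{g\cdot W}=\overline{W}$ for every $g$). Moreover $\overline{W}\cap X=W$: writing $W=C\cap X$ with $C$ closed in $Y$, we have $\overline{W}\subseteq C$, whence $\overline{W}\cap X\subseteq C\cap X=W\subseteq\overline{W}\cap X$. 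Thus $W\mapsto\overline{W}$ is an order-preserving right inverse to ``intersect with $X$'', sending closed subsets of $X$ into $\GL$-stable closed subsets of $Y$.

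Granting this, noetherianity is immediate: a descending chain $W_1\supseteq W_2\supseteq\cdots$ of closed subsets of $X$ yields a descending chain $\overline{W_1}\supseteq\overline{W_2}\supseteq\cdots$ of $\GL$-stable closed subsets of $Y$, which stabilizes by Theorem~\ref{thm:draisma}; intersecting back with $X$ shows the original chain stabilizes. For soberness, I would take an irreducible closed subset $W\subseteq X$; then $\overline{W}$ is irreducible, closed, and $\GL$-stable in $Y$, so it is of the form $\cV(\fp)$ for a prime ideal $\fp$ of $A$, and $\fp$ (being the ideal of a $\GL$-stable subset) is $\GL$-stable, i.e. $\fp\in X$. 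Since $\fp\in\overline{W}\cap X=W$ and the closure of $\{\fp\}$ in $X$ equals $\cV(\fp)\cap X=\overline{W}\cap X=W$, the point $\fp$ is a generic point of $W$; its uniqueness follows from the uniqueness of generic points in the sober space $Y$. Hence $X$ is sober, and the conclusion follows.

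I do not expect a genuine obstacle once Draisma's theorem is in hand: the argument is essentially bookkeeping. The only point requiring a moment's care is the dictionary of the second paragraph — in particular the identity $\overline{W}\cap X=W$ and the observation that the generic point in $Y$ of a $\GL$-stable irreducible closed subset is again $\GL$-stable (so that it really lies in $X$), which rests on the standard correspondence between $\GL$-stable closed subsets of $\Spec(A)$ and radical $\GL$-ideals. One should also be sure to invoke the form of Hochster's characterization appropriate to noetherian spaces.
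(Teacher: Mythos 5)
Your proposal is correct and follows essentially the same route as the paper: noetherianity via Draisma's theorem applied to the $\GL$-stable closures $\overline{W}$ of closed subsets of $X$, and soberness by observing that the generic point of the ($\GL$-stable, irreducible) closure in $Y$ is itself $\GL$-stable and hence lies in $X$. The only cosmetic difference is that you verify $\overline{W}\cap X=W$ directly from general topology, whereas the paper passes through the $\GL$-stable set $\bigcap_{g}gW$; both are fine.
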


\begin{proof}
Suppose that $Z$ is a closed subset of $X$. Then $Z$ has the form $W \cap X$ for some closed subset $W$ of $Y$. Since every point in $Z$ is $\GL_{\infty}$-invariant, it follows that $Z=(gW) \cap X$ for any $g \in \GL_{\infty}$. Thus $Z=W' \cap X$ where $W'=\bigcap_{g \in \GL_{\infty}} gW$ is a $\GL$-stable closed subset of $Y$. It follows that $Z=\ol{Z} \cap X$, where $\ol{Z}$ is the closure of $Z$ in $Y$, since $Z \subset \ol{Z} \subset W'$.

Now suppose that $\cdots \subset Z_2 \subset Z_1$ is a descending chain of closed sets in $X$. Then $\cdots \subset \ol{Z}_2 \subset \ol{Z}_1$ is a descending chain of $\GL$-stable closed subsets of $Y$, and thus stabilizes by Theorem~\ref{thm:draisma}. Since $Z_i=\ol{Z}_i \cap X$, it follows that the original chain stabilizes too. Thus $X$ is noetherian.

Finally, let $Z$ be an irreducible closed subset of $X$. Then $\ol{Z}$ is a $\GL$-stable irreducible closed subset of $Y$. Its generic point is thus $\GL$-stable, and therefore belongs to $X$. One easily sees that it is the unique generic point for $Z$. Thus $X$ is sober. Since it is also noetherian, it is spectral.
\end{proof}

\begin{corollary} \label{cor:fin-min-prime}
Let $A$ be as in Theorem~\ref{thm:draisma}. Then $A$ has finitely many minimal primes.
\end{corollary}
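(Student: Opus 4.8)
The plan is to deduce this from Draisma's theorem together with the fact that minimal primes of $A$ are automatically $\GL$-stable. Write $Y=\Spec(A)$, and let $X=Y^{\GL}$ be the subspace of $\GL_{\infty}$-stable primes. By Proposition~\ref{prop:min-prime}, every minimal prime of $A$ lies in $X$. By Corollary~\ref{cor:GL-fixed-space}, $X$ is a noetherian spectral space; in particular $X$ is sober and has only finitely many irreducible components. It therefore suffices to show that the minimal primes of $A$ inject into the set of irreducible components of $X$.

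The key step is to argue that every minimal prime $\fp$ of $A$ is the generic point of an irreducible component of $X$. First, $\fp$ is a minimal point of $X$: any point of $X$ lying weakly below $\fp$ is a prime of $Y$ contained in $\fp$, hence equals $\fp$ by minimality of $\fp$ in $Y$. Now the closure $Z$ of $\{\fp\}$ in $X$ is irreducible, and I claim it is maximal among irreducible closed subsets of $X$. Suppose $Z\subseteq Z'$ with $Z'$ irreducible and closed in $X$. By the analysis of closed subsets of $X$ in the proof of Corollary~\ref{cor:GL-fixed-space}, the closure of $Z'$ in $Y$ is a $\GL$-stable irreducible closed subset whose generic point $\xi$ lies in $X$; since $\fp$ lies in the closure of $\{\xi\}$ in $Y$, we get $\xi\subseteq\fp$ as ideals, hence $\xi=\fp$ by minimality of $\fp$, and therefore $Z'=Z$. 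So $Z$ is an irreducible component of $X$ with generic point $\fp$.

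Finally, the assignment $\fp\mapsto Z$ is injective on minimal primes: if two minimal primes $\fp,\fp'$ have the same closure in $X$, then $\fp$ lies in the closure of $\{\fp'\}$ in $Y$, so $\fp'\subseteq\fp$, whence $\fp=\fp'$. Thus the minimal primes of $A$ inject into the finite set of irreducible components of the noetherian space $X$, so there are only finitely many of them.

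The argument is short, and the one place to take care is the transition from $Y$ to the smaller space $X$: one must make sure that a minimal prime of $A$ survives as a generic point of a component after restricting to $\GL$-stable primes, rather than becoming a non-generic point of the smaller space. This is exactly where one uses that a specialization relation inside $X$ is in particular a specialization relation inside $Y$, so that minimality in $Y$ is inherited; the finiteness then comes for free from noetherianity of $X$, i.e., ultimately from Theorem~\ref{thm:draisma}.
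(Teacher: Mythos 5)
Your proof is correct and follows essentially the same route as the paper: it combines Proposition~\ref{prop:min-prime} (minimal primes are $\GL$-stable) with Corollary~\ref{cor:GL-fixed-space} (the fixed-point space $X$ is noetherian and sober), identifying minimal primes with generic points of the finitely many irreducible components of $X$. The extra care you take with specializations in $X$ versus $Y$ just makes explicit what the paper leaves implicit.
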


\begin{proof}
Let $X$ be as in Corollary~\ref{cor:GL-fixed-space}. Since $X$ is noetherian, it has finitely many irreducible components. Since it is sober, these components correspond to the minimal $\GL$-stable prime ideals of $X$. There are thus finitely many of these. However, if $\fp$ is any minimal prime then it is $\GL$-stable by Proposition~\ref{prop:min-prime}, and thus obviously a minimal $\GL$-stable prime. The result follows.
\end{proof}

\section{The key result} \label{s:key}

The following is the key theorem of this paper: it is the bridge that connects equivariant concepts to ordinary ones.

\begin{theorem} \label{thm:nil}
Let $A$ be a tca in $\Pol$ that is finitely generated over $\bQ$. The following are equivalent:
\begin{enumerate}
\item Every positive degree homogeneous element of $A(\bQ^{r|s})$ is nilpotent, for all $r$, $s$.
\item The ideal $A_+$ is nilpotent.
\end{enumerate}
\end{theorem}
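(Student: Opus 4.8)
The plan is the following. The implication (b)$\Rightarrow$(a) is immediate: evaluation at $\bQ^{r|s}$ is a symmetric monoidal functor respecting ideals and the grading, so if $A_+^N = 0$ then $(A(\bQ^{r|s})_+)^N = 0$, and every positive-degree homogeneous element of $A(\bQ^{r|s})$ lies in this nilpotent augmentation ideal. The content is in (a)$\Rightarrow$(b), which I would prove by contrapositive.

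First I would observe that, because $A$ is finitely generated over $\bQ$, the ideal $A_+$ is nilpotent if and only if $A$ is \emph{bounded}, i.e. $A_d = 0$ for $d \gg 0$: writing $A = \Sym(E)/I$ with $E$ of finite length and $m = \maxdeg(E)$, any element of $\Sym(E)$ of degree at least $mN$ is a product of at least $N$ positive-degree elements, so $A_+^N = 0$ forces $A_{\ge mN} = 0$; the converse is clear. So, assuming $A$ is unbounded, I must exhibit $r,s$ together with a non-nilpotent positive-degree homogeneous element of $A(\bQ^{r|s})$. Now $A(\bQ^{r|s})$ is a finite module over its even part, which is an ordinary finitely generated (hence noetherian) commutative $\bQ$-algebra; so $A(\bQ^{r|s})$ is noetherian, and since its odd part is square-zero its nilradical is nilpotent. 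Hence, if $A(\bQ^{r|s})$ is unbounded, then its augmentation ideal is not nilpotent, and a standard graded argument yields a non-nilpotent homogeneous positive-degree element. It therefore suffices to find $r,s$ for which $A(\bQ^{r|s})$ is unbounded. By Proposition~\ref{prop:schur-eval}, writing $H_{r,s} = \{\lambda : \lambda_{r+1} \le s\}$ for the ``fat hook'', the ring $A(\bQ^{r|s})$ is unbounded precisely when the support of $A$ — the set of $\lambda$ for which $\bS_\lambda$ occurs in $A$ — meets $H_{r,s}$ in infinitely many partitions. Thus the theorem reduces to the purely representation-theoretic assertion: \emph{the support of a finitely generated, unbounded tca meets some $H_{r,s}$ in infinitely many partitions.}

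To prove this I would exploit finite generation. The clean model case is a tca generated in degree $1$, say by $\bQ^k \otimes T_1$: the Cauchy identity shows that $\Sym(\bQ^k \otimes T_1)$ has support contained in $H_{k,0}$, so the support of $A$ lies in this single fat hook and, being infinite, meets it infinitely. In general, write $A = \Sym(E)/I$ with $E = \bigoplus_{i=1}^N \bS_{\mu_i}$. If $\bS_\nu$ occurs in $A$ with $\lvert\nu\rvert > \maxdeg(E)$, then $\bS_\nu$ is an honest product, and unwinding this (using that $\Sym^a$ is a direct summand of $\Sym^{a-1} \otimes (\,\cdot\,)$ in characteristic zero) one finds a \emph{predecessor} $\bS_\lambda$, still occurring in $A$, such that $\bS_\nu$ is a constituent of $\bS_\lambda \otimes \bS_{\mu_i}$ for some $i$; the Littlewood--Richardson rule then bounds each of $\lvert\nu\rvert - \lvert\lambda\rvert$, $\ell(\nu) - \ell(\lambda)$, and $w(\nu) - w(\lambda)$ by a constant depending only on $E$. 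Iterating organizes the support of $A$ as a locally finite forest with finitely many roots of bounded size, and, the support being infinite, some root has infinitely many descendants.

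The heart of the matter — and what I expect to be the main obstacle — is the final combinatorial step: showing that, once one has exhausted the ``shallow'' families in the support (those of bounded length, which feed some $H_{r,0}$, and those of bounded width, which feed some $H_{0,s}$), the bounded per-step changes along descending chains of the forest nevertheless confine an infinite subfamily to a single fat hook $H_{r,s}$. The examples $\Sym(\Sym^2)$ and $\Sym(\lw^2)$, whose supports contain the two-rowed partitions $(a,a)$, illustrate the typical mechanism, and the degree-one case illustrates the extreme one; but turning this picture into a proof in general is the genuinely nontrivial point. Finally I would remark that super vector spaces are essential here: by Example~\ref{ex:main} the analogue with ordinary vector spaces is false, and the better behaviour of the super side is the same phenomenon responsible for Proposition~\ref{prop:rad-prime}.
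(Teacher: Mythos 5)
Your easy reductions are fine: (b)$\Rightarrow$(a), the equivalence of (b) with boundedness of $A$, and the observation that $A(\bQ^{r|s})$ is unbounded exactly when the support of $A$ meets the fat hook $H_{r,s}=\{\lambda: \lambda_{r+1}\le s\}$ in infinitely many partitions, are all correct. But the proof then stops precisely where the theorem lives: the assertion that a finitely generated unbounded tca must meet \emph{some} $H_{r,s}$ infinitely is the whole content of the result (it is equivalent to the theorem, given your reductions), and you explicitly leave it as ``the genuinely nontrivial point.'' Worse, the combinatorial scaffolding you propose cannot close this gap on its own. The only information you extract from the algebra is a forest structure on the support in which each node $\nu$ is a Littlewood--Richardson constituent of $\lambda\otimes\bS_{\mu_i}$ for a predecessor $\lambda$, so that size, length and width change by at most $\maxdeg(E)$ per step. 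That combinatorial structure is perfectly consistent with supports that avoid every fat hook: for instance, the family of all partitions sandwiched between consecutive staircases $(n,n-1,\dots,1)\subset\lambda\subset(n+1,n,\dots,1)$ forms an infinite chain in which each step adds a single box (so all your per-step bounds hold), yet each $H_{r,s}$ contains only finitely many of its members, since $\lambda_{r+1}\ge n-r$. Ruling such supports out requires genuinely more algebraic input than LR-bounded steps; no argument you sketch supplies it.

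For comparison, the paper does not attempt the purely structural statement ``finitely generated and unbounded implies some $A(\bQ^{r|s})$ unbounded'' by combinatorics; it proves (a)$\Rightarrow$(b) by induction on the degree of generation, using hypothesis (a) at every stage. One passes to the shift $A'(V)=A(\bQ\oplus V)$ with its $\bG_m$-action, splits the generators as $E'=E'_0\oplus\cdots\oplus E'_d$ by $\bG_m$-weight, and applies the inductive hypothesis to the subalgebra $B$ generated by the positive-weight pieces (its generators are nilpotent by (a), and it is generated in degrees $<d$); this bounds the $\bG_m$-weights occurring in $A'$, hence bounds the width of $A$ by some $N$ via Proposition~\ref{prop:width}. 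Then evaluating at the purely odd space $\bQ^{0|N}$ and using (a) again, together with Proposition~\ref{prop:schur-eval}, forces $A_\lambda=0$ for $|\lambda|$ large, i.e.\ (b). If you want to salvage your outline, you would need an analogous mechanism (a width bound, or some other use of the nilpotence hypothesis or of the ring structure beyond constituent relations) to exclude the staircase-type escapes; as written, the proposal has a genuine gap at its central step.
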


\begin{proof}
It is clear that (b) implies (a). We prove the converse. We proceed by induction on the degree of generation of $A$. Thus suppose $A$ is generated in degrees $\le d$ and satisfies (a), and that the theorem is true for tca's generated in degrees $<d$.

Before getting into the argument, we introduce a piece of notation. For a polynomial functor $F$, we let $F'$ be the polynomial functor defined by $F'(V)=F(\bQ \oplus V)$. We note that $F'$ carries an action of $\bG_m$, through its action on $\bQ$. If $x \in F'(\bQ^{r|s})=F(\bQ^{r+1|s})$ is a weight vector for $\GL_{r|s}$ of weight $(a_1, \ldots, a_r; b_1, \ldots, b_s)$ and simultaneously a weight vector for $\bG_m$ of weight $k$ then it is also a weight vector for $\GL_{r+1|s}$ of weight $(k, a_1, \ldots, a_r; b_1, \ldots, b_s)$.

Let $E \subset A_0 \oplus \cdots \oplus A_d$ be a subrepresentation generating $A$. Let $E'=E'_0 \oplus \cdots \oplus E'_d$ be the weight space decomposition for $E'$ with respect to the $\bG_m$ action; note that $E'_i$ is a polynomial functor of degree $\le d-i$. Let $B$ be the subalgebra of $A'$ generated by $E'_1, \ldots, E'_d$, and let $C$ be the subalgebra generated by $E'_0$. For $i>0$, every weight vector of $E'_i(\bQ^{r|s})$ is a weight vector of $E(\bQ^{r+1|s})$ of non-zero weight (since the first component of the weight is $i$), and thus a positive degree homogenous element of $A(\bQ^{r+1|s})$, and thus nilpotent. We thus see that the generators of $B(\bQ^{r|s})$ are nilpotent, and so $B$ satisfies (a). Since $B$ is finitely generated in degrees $<d$, we can apply the inductive hypothesis to conclude that $B_n=0$ for $n \gg 0$. Since the degree~0 generators of $B$ are also nilpotent, it follows that $B$ has finite length as a polynomial functor. In particular, only finitely many $\bG_m$ weights appear in $B$; say that the largest one is $N$.

Let $T$ be the maximal torus of $\GL_{n+1}$. Suppose that $\lambda=(\lambda_1, \ldots, \lambda_{n+1})$ is a weight of $T$ that appears in $A(\bC^{n+1})=A'(\bQ^n)$; we note that $\lambda_1$ records the action of $\bG_m$. Since $B(\bQ^n)$ and $C(\bQ^n)$ generate $A'(\bQ^n)$ are are $T$-stable, it follows that $\lambda$ can be written in the form $\mu+\nu$ where $\mu$ is a weight of $T$ appearing in $B(\bQ^n)$ and $\nu$ is one in $C(\bC^n)$. We have $\mu_1 \le N$ by the definition of $N$. Since $C$ is generated by $E'_0$, on which $\bG_m$ acts trivially, we see that $\nu_1=0$. Thus $\lambda_1 \le N$. Since this holds for all weights in $A(\bQ^{n+1})$ for any $n$, it follows that $A$ has width $\le N$ by Proposition~\ref{prop:width}.

Decompose $A$ as $\bigoplus_{\lambda} A_{\lambda} \otimes \bS_{\lambda}$ where $A_{\lambda}$ is a multiplicity space. We have just shown that $A_{\lambda}$ is only non-zero when $\lambda_1 \le N$. Consider the superalgebra $A(\bQ^{0|N}) = \bigoplus_{\lambda} A_{\lambda} \otimes \bS_{\lambda}(\bQ^{0|N})$. This is finitely generated, and every positive degree element is nilpotent by assumption. Thus $A(\bQ^{0|N})_n=0$ for $n$ sufficiently large, say $n>M$. We thus have $A_{\lambda} \otimes \bS_{\lambda}(\bQ^{0|N})=0$ for $\vert \lambda \vert>M$, and so $A_{\lambda}=0$: indeed, we know this already if $\lambda_1>N$, and otherwise $\bS_{\lambda}(\bQ^{0|N})$ is non-zero by Proposition~\ref{prop:schur-eval}. It follows that $A_n=0$ for $n>M$ as well, and so $A$ satisfies (b). This proves the theorem.
\end{proof}

\begin{example}
Suppose $A$ is generated over $\bQ$ by $E=\Sym^2$. The space $E'(\bQ^n)=E(\bQ^{n+1})$ has for a basis elements $x_{i,j}$ with $1 \le i \le j \le n+1$. The degree of $x_{i,j}$ under the $\bG_m$ action is simply the number of indices equal to~1. Thus $E'_2(\bQ^n)$ is spanned by $x_{1,1}$, while $E'_1(\bQ^n)$ is spanned by the $x_{1,j}$ with $2 \le j$, and $E'_0(\bQ^n)$ is spanned by the $x_{i,j}$ with $2 \le i,j$. Thus $B(\bQ^n)$ is generated by $x_{1,1}$, which is $\GL_n$ invariant, and the $x_{1,j}$ with $2 \le j \le n+1$, which generate a copy of the standard representation of $\GL_n$. We thus see that $B$ is generated by $\Sym^0 \oplus \Sym^1$, which has degree $\le 1$ as a polynomial functor. This shows that $B$ can have degree~0 generators even if $A$ does not. The algebra $C(\bQ^n)$ is generated by the $x_{i,j}$ with $2 \le i,j$, and so $\bG_m$ acts trivially on it.
\end{example}

\begin{corollary} \label{cor:nil}
Let $A$ be an arbitrary tca in $\Pol$ and let $E$ be a finite length subrepresentation of $A$. The following are equivalent:
\begin{enumerate}
\item Every element of $E(\bQ^{r|s})$ is nilpotent, for all $r$ and $s$.
\item The space $E$ is nilpotent, i.e., the map $E^{\otimes n} \to A$ is zero for some $n$.
\end{enumerate}
\end{corollary}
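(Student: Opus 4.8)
The plan is to deduce this result from Theorem~\ref{thm:nil} by passing to the sub-tca of $A$ generated by $E$. The implication (b)$\Rightarrow$(a) is immediate: if $E^{\otimes n}\to A$ vanishes, then applying the evaluation functor at $\bQ^{r|s}$ — which is exact and compatible with tensor products, since $\Pol$ is semisimple — shows that the $n$-fold multiplication map $E(\bQ^{r|s})^{\otimes n}\to A(\bQ^{r|s})$ is zero, so $e^{n}=0$ for every $e\in E(\bQ^{r|s})$. For the converse, decompose $E=E_{0}\oplus E_{+}$ into its degree-zero part (a finite-dimensional constant functor landing in $A_{0}$) and its positive-degree part $E_{+}$, and let $B\subseteq A$ be the sub-tca generated by $E$. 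Since $E$ has finite length, $B$ is finitely generated over $\bQ$, so Theorem~\ref{thm:nil} will apply to $B$ once we verify hypothesis (a) for it. Exactness and tensor-compatibility of evaluation show that $B(\bQ^{r|s})$ is the subalgebra of $A(\bQ^{r|s})$ generated by $E(\bQ^{r|s})$, with the grading inherited from $A$, and that for $n\geq 1$ the degree-$n$ piece of $B(\bQ^{r|s})$ is spanned by monomials in $E(\bQ^{r|s})$ of total degree $n$, each of which has at least one factor in $E_{+}(\bQ^{r|s})$; hence the positive-degree part of $B(\bQ^{r|s})$ lies in the ideal of $B(\bQ^{r|s})$ generated by $E_{+}(\bQ^{r|s})$.

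Now $E_{+}(\bQ^{r|s})$ is a finite-dimensional space of nilpotent elements (by hypothesis (a)), and the ideal generated by a finite-dimensional space of nilpotent elements in a supercommutative $\bQ$-algebra is nilpotent; therefore every positive-degree homogeneous element of $B(\bQ^{r|s})$ is nilpotent. Thus $B$ satisfies condition (a) of Theorem~\ref{thm:nil}, which gives $B_{+}^{M}=0$ for some $M$; in particular $E_{+}^{M}=0$ in $A$. Separately, taking $r=s=0$ in (a) shows that the finite-dimensional subspace $E_{0}\subseteq A_{0}$ consists of nilpotent elements, so a pigeonhole estimate yields $E_{0}^{N}=0$ in $A_{0}$ for some $N$. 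Finally, expanding a product of $N+M$ elements of $E$ according to the decomposition $E=E_{0}\oplus E_{+}$, each resulting term has at least $M$ factors from $E_{+}$ or at least $N$ factors from $E_{0}$, hence vanishes; so $E^{\otimes(N+M)}\to A$ is zero, which is (b).

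I expect no real obstacle; the delicate points are purely bookkeeping. One must confirm that forming $B$ and then evaluating produces the subalgebra of $A(\bQ^{r|s})$ generated by $E(\bQ^{r|s})$, with the grading induced from $A$ — this rests on exactness of evaluation on the semisimple category $\Pol$ together with the identification of $\Sym(E)(\bQ^{r|s})$ with the free supercommutative algebra on $E(\bQ^{r|s})$. One should also observe that $B_{+}$ being nilpotent does not by itself give (b), since $E$ may have a nonzero degree-zero part; this is precisely why the contribution of $E_{0}$ is handled separately above. Both points are routine, so the substantive content of the proof is the reduction to Theorem~\ref{thm:nil}.
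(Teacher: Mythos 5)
Your proof is correct and follows essentially the same route as the paper: both reduce to Theorem~\ref{thm:nil} by passing to the sub-tca generated by $E$, verify hypothesis (a) there using that the positive-degree part is generated by the finite-dimensional nilpotent space coming from $E_+$, and treat the degree-zero piece $E_0$ separately by a finite-dimensional pigeonhole argument. The only cosmetic difference is that the paper first reduces to the case $E_0=0$ and takes the sub-tca generated by $E_+$, while you generate by all of $E$ and spell out the final combination of $E_0^N=0$ and $E_+^M=0$, which the paper leaves implicit.
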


\begin{proof}
Obviously (b) implies (a); we prove the converse. First suppose that $E$ is generated in positive degrees. Let $B$ be the sub tca of $A$ generated over $\bQ$ by $E$. Then $B(\bQ^{r|s})$ is generated as a subalgebra of $A(\bQ^{r|s})$ by $E(\bQ^{r|s})$, and so every positive degree element is nilpotent. Thus, by Theorem~\ref{thm:nil}, we see that $B_+$ is nilpotent. Since $E \subset B_+$, we see that it too is nilpotent.

In general, write $E=E_0 \oplus E_+$ where $E_0$ is the degree~0 piece of $E$ and $E_+$ is the sum of the positive degree pieces of $E$. Then $E_0$ is finite dimensional and every element is nilpotent, so $E_0$ is nilpotent, and $E_+$ is nilpotent by the previous paragraph. Thus $E$ is nilpotent.
\end{proof}

The proof of Theorem~\ref{thm:nil} is effective, in the following sense. For a finite length polynomial functor $E$, let $\cS_k(E)$ be the class of tca's $A$ such that (i) $A$ contains a copy of $E$ that generates it over $\bQ$; and (ii) each weight space of $E(\bC^{r|s})$ of non-zero weight admits a basis consisting of $k$-nilpotent elements, for any $r$ and $s$. Let $\eta_k(E)$ be the supremum of $\maxdeg(A)$ over $A \in \cS_k(E)$, where $\maxdeg$ denotes the maximum non-zero degree. Theorem~\ref{thm:nil} simply states that $\maxdeg(A)$ is finite for $A \in \cS_k(E)$. In fact, the proof yields a bound on $\eta_k(E)$. Let $d$ be the degree of $E$, let $E'_i$ be as in the proof of the theorem, and let $P$ be the polynomial defined by $P(n)=\dim{E(\bQ^{0|n})}$. Examining the proof, one finds
\begin{displaymath}
\eta_k(E) \le kd \cdot P(d\eta_k(E'_1 \oplus \cdots \oplus E'_d)+kd\dim{E(\bC)}).
\end{displaymath}
This allows one to inductively obtain a bound on $\eta_k(E)$ since the argument to $\eta_k$ on the right has smaller degree than $E$. Making the rough approximation $P(x) \approx x^d$, one finds
\begin{displaymath}
\eta_k(E) \lessapprox \prod_{i=1}^d (ik)^{d!/i!}.
\end{displaymath}
This upper bound is quite large; e.g., it is substantially larger than $k^{d!} 2^{2^d}$. We do not know how close it is to the true behavior of $\eta_k(E)$.

\section{The main theorems} \label{s:main}

We fix a tca $A$ in $\Rep^{\pol}(\GL_{\infty|\infty})$ for this section. Consider the following condition:
\begin{itemize}
\item[($\ast$)] $A_0$ is noetherian and $A$ is finitely generated over $A_0$.
\end{itemize}
We will sometimes require this condition, and sometimes not. Our goal now is to prove the main theorems stated in \S \ref{ss:thms}.

\begin{proposition}[Theorem~\ref{thm1}] \label{prop:thm1}
Let $I$ and $J$ be $\GL$-ideals of $A$. Then $I \subset \rad_{\GL}{J}$ if and only if $I \subset \rad{J}$.
\end{proposition}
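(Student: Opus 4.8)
The plan is to read this proposition as the ideal-theoretic reformulation of Corollary~\ref{cor:nil}: one implication is formal and holds in any tca, while the other is the substantive one and is precisely where passing to super vector spaces is essential (the analogous statement in $\Rep^{\pol}(\GL_\infty)$ is false, as witnessed by the tca $A$ of Example~\ref{ex:main} with $J=0$ and $I=A_+$).

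First I would dispatch the direction ``$I\subset\rad_{\GL}{J}\Rightarrow I\subset\rad{J}$''. Given $x\in I$, the subrepresentation $E$ it generates is finite length and, since $I$ is $\GL$-stable, contained in $I\subset\rad_{\GL}{J}$; hence $E^n\subset J$ for some $n$ by the proposition immediately preceding Proposition~\ref{prop:rad-int}. As $x^n$ is the image of $x^{\otimes n}$ under the multiplication map $E^{\otimes n}\to A$, we get $x^n\in E^n\subset J$, i.e.\ $x\in\rad{J}$; since $x$ was arbitrary, $I\subset\rad{J}$. (No super phenomena intervene here: odd elements square to zero, and in every case $x^n$ simply lies in $E^n$.)

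For the converse, suppose $I\subset\rad{J}$. I would first reduce to $J=0$ by replacing $A$ with $A/J$; the images of $\rad_{\GL}{J}$ and $\rad{J}$ in $A/J$ are then the $\GL$-radical and the ordinary radical of the zero ideal, so the hypothesis becomes ``every element of $I$ is nilpotent'' and the goal becomes ``every finite length subrepresentation $E\subset I$ satisfies $E^n=0$ for some $n$''. Transporting to $\Pol$, write $A=F(\bQ^{\infty|\infty})$ and $E=G(\bQ^{\infty|\infty})$ for a tca $F$ in $\Pol$ and a finite length subfunctor $G\subset F$. For each $r,s$ the projection $\bQ^{\infty|\infty}\twoheadrightarrow\bQ^{r|s}$ induces a surjective homomorphism of superalgebras $F(\bQ^{\infty|\infty})\twoheadrightarrow F(\bQ^{r|s})$ carrying $E=G(\bQ^{\infty|\infty})$ onto $G(\bQ^{r|s})$; since nilpotence descends along ring surjections, every element of $G(\bQ^{r|s})$ is nilpotent. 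Corollary~\ref{cor:nil} then gives that $G$ is nilpotent, i.e.\ $E^n=0$ for some $n$, so $E\subset\rad_{\GL}(0)$. As a polynomial representation is the union of its finite length subrepresentations, this gives $I\subset\rad_{\GL}(0)$, as desired.

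The real content of the argument lives entirely in Corollary~\ref{cor:nil} (hence in the inductive proof of Theorem~\ref{thm:nil} in \S\ref{s:key}); the present proposition contributes only bookkeeping, so I expect no serious obstacle. The one step that requires a moment's care is the translation in the previous paragraph --- checking that nilpotence of the elements of $E=G(\bQ^{\infty|\infty})$ forces nilpotence of the elements of every finite-dimensional evaluation $G(\bQ^{r|s})$ --- and this is painless as long as one evaluates along the \emph{projection} $\bQ^{\infty|\infty}\to\bQ^{r|s}$, so that $G(\bQ^{r|s})$ is a homomorphic image of $E$, rather than along an inclusion. The remaining care is simply in not conflating the three incarnations of $A$: in $\Rep^{\pol}(\GL_\infty)$, in $\Rep^{\pol}(\GL_{\infty|\infty})$, and in $\Pol$.
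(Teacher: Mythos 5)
Your proof is correct and takes essentially the same route as the paper's: reduce to $J=0$ by passing to $A/J$, dispose of the easy containment directly from the definitions, and obtain the substantive direction by applying Corollary~\ref{cor:nil} to each finite length subrepresentation $E\subset I$. The translation step you spell out (passing nilpotence from $E=G(\bQ^{\infty|\infty})$ to the finite evaluations $G(\bQ^{r|s})$, via the projection --- though the inclusion would also do, since $A(\bQ^{r|s})\to A(\bQ^{\infty|\infty})$ is injective) is left implicit in the paper but is the same mechanism.
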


Note that $I \subset \rad_{\GL}{J}$ if and only if $\rad_{\GL}{I} \subset \rad_{\GL}{J}$, and similarly for ordinary radicals, so this proposition is indeed equivalent to Theorem~\ref{thm1}.

\begin{proof}
We may replace $I$ with $I+J$ without changing either condition. We may then check the conditions after passing to $A/J$. Thus we may simply assume from the outset that $J=0$.

If $I \subset \rad_{\GL}(A)$ then $I \subset \rad(A)$, since we have a containment $\rad_{\GL}(A) \subset \rad(A)$. Conversely, suppose that $I \subset \rad(A)$. Let $E$ be a finite length subrepresentation of $I$. Then every element of $E$ is nilpotent, and so $E$ is nilpotent by Corollary~\ref{cor:nil}. Thus $E \subset \rad_{\GL}(A)$. Since this holds for all $E$, it follows that $I \subset \rad_{\GL}(A)$.
\end{proof}

We now introduce an auxiliary algebra that we will be helpful in what follows. Let $B=A/\rad(A)$. The ideal $\rad(B)$ is typically not $\GL_{\infty|\infty}$ stable, but is clearly stable by $\GL_{\infty} \times \GL_{\infty} \subset \GL_{\infty|\infty}$. It is therefore also stable by the diagonal subgroup $\GL_{\infty} \subset \GL_{\infty} \times \GL_{\infty}$, and so this acts on $B$. Choose a surjection $A_0 \otimes \Sym(V) \to A$, for some representation $V \subset A$. The restriction of $V$ to the diagonal $\GL_{\infty}$ has the form $W_0 \oplus W_1[1]$, where $W_0$ and $W_1$ are polynomial representations of $\GL_{\infty}$, and $[1]$ indicates the super grading. It follows that $A$ is a quotient of $A_0 \otimes \Sym(W_0 \oplus W_1[1])$. Since $B$ has no odd part, we see that it is a quotient of $A_0 \otimes \Sym(W_0)$. In other words, $B$ is a twisted commutative algebra in the category $\Rep^{\pol}(\GL_{\infty})$. If $A$ satisfies $(\ast)$ then we can take $V$ to be a finite length representation. It follows from basic properties of Schur functors that $W_0$ and $W_1$ are then of finite length as well, and so $B$ is finitely generated over $B_0=A_0$.

\begin{proposition}[Theorem~\ref{thm4}] \label{prop:thm4}
Suppose $(\ast)$ holds. Then $\Spec_{\GL}(A)$ is noetherian.
\end{proposition}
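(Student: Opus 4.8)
The plan is to convert the topological statement into an ascending chain condition on $\GL$-radical ideals of $A$, transport that condition across the ordinary radical to the tca $B=A/\rad(A)$ introduced above, and then apply Draisma's theorem to $B$ via Corollary~\ref{cor:acc-rad}.

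First I would use that the closed subsets of $\Spec_{\GL}(A)$ correspond bijectively to the $\GL$-radical ideals of $A$, with inclusions reversed (which follows from Proposition~\ref{prop:rad-int}), so that it suffices to show every ascending chain $I_1 \subseteq I_2 \subseteq \cdots$ of $\GL$-radical ideals of $A$ stabilizes. The key point is then that the map $I \mapsto \rad(I)$ is an order-embedding of the poset of $\GL$-radical ideals of $A$ into the poset of radical ideals of $A$: it is plainly monotone, and it is order-reflecting because if $I$ and $I'$ are $\GL$-radical ideals with $\rad(I) \subseteq \rad(I')$, then $I \subseteq \rad(I) \subseteq \rad(I')$, and hence Theorem~\ref{thm1}, in the form of Proposition~\ref{prop:thm1}, gives $I \subseteq \rad_{\GL}(I') = I'$. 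In particular $I \mapsto \rad(I)$ is injective on $\GL$-radical ideals, so it is enough to prove that the chain $\rad(I_1) \subseteq \rad(I_2) \subseteq \cdots$ of radical ideals of $A$ stabilizes.

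To finish, I would descend to $B$. Each $\rad(I_k)$ contains the nilradical $\rad(A)$ of $A$, and, being the ordinary radical of the $\GL_{\infty|\infty}$-stable ideal $I_k$, it is stable under $\GL_\infty \times \GL_\infty \subset \GL_{\infty|\infty}$, since the elements of this subgroup act on $A$ by ring automorphisms and ring automorphisms preserve ordinary radicals; hence $\rad(I_k)$ is stable under the diagonal $\GL_\infty$. Thus $\rad(I_k)/\rad(A)$ is a radical $\GL$-ideal of $B$, and the chain $\rad(I_1)/\rad(A) \subseteq \rad(I_2)/\rad(A) \subseteq \cdots$ is ascending. Since $(\ast)$ holds for $A$ it holds for $B$ — as noted just before the statement, $B_0 = A_0$ is noetherian and $B$ is finitely generated over $B_0$ — so Corollary~\ref{cor:acc-rad} applies to $B$ and this chain stabilizes. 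Therefore $\rad(I_k)$ stabilizes in $A$, and then $I_k$ stabilizes by the order-embedding of the previous paragraph, which gives the desired noetherianity.

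The only non-formal ingredient here is Theorem~\ref{thm1}: it is precisely what makes the order-embedding $I \mapsto \rad(I)$ work — that is, what allows one to detect equality of $\GL$-radical ideals of $A$ after passing to ordinary radicals — and hence what lets Draisma's theorem be brought to bear. Everything else (the correspondence in the first step, the $\GL_\infty \times \GL_\infty$-stability of $\rad(I_k)$, and the fact that passing to $B$ preserves the chain, the $\GL_\infty$-stability, and the hypothesis $(\ast)$) should be routine.
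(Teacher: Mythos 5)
Your proposal is correct and follows essentially the same route as the paper: reduce to the ascending chain condition on $\GL$-radical ideals, pass to ordinary radicals (which are $\GL_\infty$-stable radical ideals descending to $B=A/\rad(A)$), invoke Corollary~\ref{cor:acc-rad} via Draisma's theorem, and recover stabilization of the original chain from Proposition~\ref{prop:thm1}. Your explicit order-embedding observation is just a slightly more detailed phrasing of the paper's final step.
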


\begin{proof}
It suffices to show that every ascending chain of $\GL$-radical ideals in $A$ stabilizes. Thus let $I_1 \subset I_2 \subset \cdots$ be such a chain. Then $\rad(I_1) \subset \rad(I_2)$ is an ascending chain of $\GL_{\infty}$-stable radical ideals of $A$, and thus corresponds to an ascending chain of $\GL_{\infty}$-stable radical ideals of $B$. It therefore stabilizes by Corollary~\ref{cor:acc-rad}. By Proposition~\ref{prop:thm1}, it follows that the original chain stabilizes.
\end{proof}

\begin{proposition}[Theorem~\ref{thm3}] \label{prop:min}
Suppose $(\ast)$ holds.
\begin{enumerate}
\item $A$ has finitely many minimal $\GL$-primes, say $\fp_1, \ldots, \fp_n$;
\item $A$ has finitely many minimal primes, say $\fq_1, \ldots, \fq_m$;
\item $n=m$, and after applying a permutation we have $\fq_i=\rad(\fp_i)$ for all $i$.
\end{enumerate}
\end{proposition}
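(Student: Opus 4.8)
The plan is to pin down the relationship between $\GL$-primes of $A$ and ordinary primes of $A$, via two inputs: the auxiliary algebra $B = A/\rad(A)$, which under $(\ast)$ is a finitely generated tca over the noetherian ring $A_0$ \emph{in $\Rep^{\pol}(\GL_{\infty})$}, and Proposition~\ref{prop:thm1}, which controls how ordinary radicals sit relative to $\GL$-radicals. Part (b) is immediate: minimal primes of $A$ coincide with minimal primes of $B = A/\rad(A)$, and $B$ satisfies the hypotheses of Corollary~\ref{cor:fin-min-prime}, so there are finitely many; call them $\fq_1, \dots, \fq_m$.

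For the rest I would study the map $\fp \mapsto \rad(\fp)$ from $\Spec_{\GL}(A)$ to $\Spec(A)$, which is well defined by Proposition~\ref{prop:rad-prime}. The crucial observation is that it is an order-embedding: $\fp \subseteq \fp'$ trivially gives $\rad(\fp) \subseteq \rad(\fp')$, and conversely $\rad(\fp) \subseteq \rad(\fp')$ gives $\fp \subseteq \rad(\fp')$, whence $\fp \subseteq \rad_{\GL}(\fp') = \fp'$ by Proposition~\ref{prop:thm1} (using that $\fp'$, being $\GL$-prime, is $\GL$-radical). In particular the map is injective. Next I would produce a one-sided inverse on minimal objects. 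Given any prime $\fq$ of $A$, let $\fq^{\circ}$ be the largest $\GL$-ideal contained in $\fq$ (it exists, being the sum of all such). Then $\fq^{\circ}$ is $\GL$-prime: if $VW \subseteq \fq^{\circ} \subseteq \fq$ with $\fq$ prime, then, by considering products of elements, $V \subseteq \fq$ or $W \subseteq \fq$; but the ordinary ideal $\langle V \rangle = AV$ generated by a subrepresentation $V$ is again a subrepresentation (it is the image of the equivariant map $A \otimes V \to A$), hence a $\GL$-ideal, so $\langle V \rangle \subseteq \fq$ forces $\langle V \rangle \subseteq \fq^{\circ}$, and thus $V \subseteq \fq^{\circ}$ or $W \subseteq \fq^{\circ}$. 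If moreover $\fq$ is a \emph{minimal} prime, then $\rad(\fq^{\circ})$ is a prime (Proposition~\ref{prop:rad-prime}) contained in $\fq$, hence equals $\fq$; and $\fq^{\circ}$ is a minimal $\GL$-prime, since a strictly smaller $\GL$-prime $\fp'$ would give $\rad(\fp') \subsetneq \rad(\fq^{\circ}) = \fq$ by the order-embedding, impossible for a prime below a minimal prime.

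It remains to assemble the bijection. By the previous paragraph, $\fq \mapsto \fq^{\circ}$ sends the minimal primes $\fq_1, \dots, \fq_m$ to minimal $\GL$-primes with $\rad(\fq_j^{\circ}) = \fq_j$, and distinct $\fq_j$ yield distinct $\fq_j^{\circ}$ (apply $\rad$). Conversely, any minimal $\GL$-prime $\fp$ satisfies $\rad(\fp) \supseteq \rad(A) = \fq_1 \cap \cdots \cap \fq_m$, so $\rad(\fp) \supseteq \fq_j$ for some $j$ (as $\rad(\fp)$ is prime); then $\rad(\fq_j^{\circ}) = \fq_j \subseteq \rad(\fp)$ gives $\fq_j^{\circ} \subseteq \fp$ by the order-embedding, forcing $\fp = \fq_j^{\circ}$ since $\fp$ is minimal. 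Hence the minimal $\GL$-primes are exactly $\fq_1^{\circ}, \dots, \fq_m^{\circ}$, which proves (a) with $n = m$, and, setting $\fp_j = \fq_j^{\circ}$, proves (c). (Alternatively, the finiteness assertion in (a) can be obtained directly from Theorem~\ref{thm4}.)

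The main obstacle is the order-embedding property, i.e.\ that $\rad(\fp) \subseteq \rad(\fp')$ \emph{implies} $\fp \subseteq \fp'$. This is precisely the ``super'' phenomenon: over ordinary vector spaces it fails (Example~\ref{ex:main}), and it is the one point where Theorem~\ref{thm1} is used. Everything else — that $\fq^{\circ}$ is $\GL$-prime, that $\rad(\fq^{\circ}) = \fq$ when $\fq$ is minimal, and the bookkeeping in the last paragraph — is formal once that input is in hand.
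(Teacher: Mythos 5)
Your proof is correct, and it uses the same two essential ``super'' inputs as the paper --- Proposition~\ref{prop:rad-prime} (the radical of a $\GL$-prime is prime) and Proposition~\ref{prop:thm1} --- with part (b) handled identically via $B=A/\rad(A)$ and Corollary~\ref{cor:fin-min-prime}. But the organization is genuinely different. The paper first gets (a) as an immediate consequence of Proposition~\ref{prop:thm4} (noetherianity of $\Spec_{\GL}(A)$), and only then matches the two finite lists: for a minimal prime $\fq_i$ it uses $\fp_1 \cap \cdots \cap \fp_n = \rad_{\GL}(A) \subset \rad(A) \subset \fq_i$ to find some $\fp_j$ with $\rad(\fp_j)=\fq_i$, with uniqueness and minimality of $\rad(\fp_j)$ extracted from Proposition~\ref{prop:thm1}. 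You instead build an explicit inverse on minimal objects: for a prime $\fq$ you take the largest $\GL$-ideal $\fq^{\circ} \subset \fq$ and prove it is $\GL$-prime --- a lemma not in the paper, and a nice one, since it serves as a super-compatible substitute for Proposition~\ref{prop:min-prime}, whose proof breaks down over $\GL_{\infty|\infty}$; your element-level verification (using that $\langle V\rangle = AV$ is a $\GL$-ideal) is sound. Combined with the order-embedding property of $\fp \mapsto \rad(\fp)$, which is exactly where Theorem~\ref{thm1} enters in both arguments, this yields the bijection and hence deduces (a) from (b), so your proof of this proposition does not need Theorem~\ref{thm4} at all (you correctly flag it only as an alternative). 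What each route buys: yours gives surjectivity onto the minimal primes without knowing beforehand that the minimal $\GL$-primes are finite in number, and isolates the reusable fact that the $\GL$-core of a prime is $\GL$-prime; the paper's version is shorter once Theorem~\ref{thm4} is in hand and avoids introducing $\fq^{\circ}$. Both ultimately rest on Draisma's theorem only through the corollaries in \S\ref{s:bg}.
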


\begin{proof}
(a) This is an immediate consequence of Proposition~\ref{prop:thm4}.

(b) The minimal primes of $A$ correspond bijectively to those of $B$, and there are finitely many of these by Corollary~\ref{cor:fin-min-prime}.

(c) Consider a minimal prime $\fq_i$. We have
\begin{displaymath}
\fp_1 \cap \cdots \cap \fp_n = \rad_{\GL}(A) \subset \rad(A) \subset \fq_i.
\end{displaymath}
Since $\fq_i$ is prime, it follows that $\fp_j \subset \fq_i$ for some $j$. Thus $\rad(\fp_j) \subset \fq_i$. Since $\rad(\fp_j)$ is prime (Proposition~\ref{prop:rad-prime}) and $\fq_i$ is a minimal prime, we have $\rad(\fp_j)=\fq_i$. By Proposition~\ref{prop:thm1}, it follows that $j$ is unique: indeed, if $\rad(\fp_j)=\rad(\fp_k)$ then $\fp_j=\fp_k$ and so $j=k$.

To complete the proof, it suffices to show that $\rad(\fp_j)$ is a minimal prime for all $j$. We know that $\rad(\fp_j)$ is prime. It therefore contains some minimal prime $\fq_i$. We have shown that $\fq_i=\rad(\fp_k)$ for some $k$. Thus $\fp_k \subset \rad(\fp_j)$, and so $\fp_k \subset \rad_{\GL}(\fp_j)=\fp_j$ by Proposition~\ref{prop:thm1}. Since $\fp_j$ is a minimal $\GL$-prime, it follows that $\fp_j=\fp_k$. Hence $\rad(\fp_j)=\fq_i$ is a minimal prime.
\end{proof}

\begin{proposition}[Theorem~\ref{thm2}]
Let $I$ be a $\GL$-ideal of $A$. Then $\rad_{\GL}(I)$ is $\GL$-prime if and only if $\rad(I)$ is prime.
\end{proposition}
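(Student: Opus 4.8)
The plan is to deduce the proposition formally from two results already in hand: Proposition~\ref{prop:thm1} (Theorem~\ref{thm1}), which lets one pass freely between a containment in $\rad_{\GL}$ and the corresponding containment in the ordinary $\rad$, and Proposition~\ref{prop:rad-prime}, which says the ordinary radical of a $\GL$-prime is prime. The one computation I would record first is the identity $\rad(\rad_{\GL}(I)) = \rad(I)$. The inclusion $\supseteq$ is immediate from $I \subseteq \rad_{\GL}(I)$. For $\subseteq$, note that $\rad_{\GL}(I) \subseteq \rad(I)$ — either directly, since $v^n \in V^n \subseteq I$ whenever $V^n \subseteq I$ and $v \in V$, or as the special case of Proposition~\ref{prop:thm1} applied to the pair $(\rad_{\GL}(I), I)$ — and then apply $\rad$ to both sides. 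This identity is the bridge used in both directions.

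For the forward implication, suppose $\rad_{\GL}(I)$ is $\GL$-prime. Since $A$ is a tca in $\Rep^{\pol}(\GL_{\infty|\infty})$, Proposition~\ref{prop:rad-prime} applies to the $\GL$-prime $\rad_{\GL}(I)$ and shows that $\rad(\rad_{\GL}(I))$ is prime. By the identity above this ideal equals $\rad(I)$, so $\rad(I)$ is prime.

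For the reverse implication, suppose $\rad(I)$ is prime. I would verify the $\GL$-prime condition for $\rad_{\GL}(I)$ in its equivalent formulation in terms of $\GL$-ideals (as noted in \S\ref{s:bg}). So let $\fa,\fb$ be $\GL$-ideals with $\fa\fb \subseteq \rad_{\GL}(I)$. Then $\fa\fb \subseteq \rad(I)$, and since $\rad(I)$ is an ordinary prime ideal, elementary commutative algebra gives $\fa \subseteq \rad(I)$ or $\fb \subseteq \rad(I)$. Applying Proposition~\ref{prop:thm1} (with the second ideal taken to be $I$) upgrades whichever containment holds to a containment in $\rad_{\GL}(I)$. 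Hence $\rad_{\GL}(I)$ is $\GL$-prime.

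I do not anticipate a genuine obstacle: all of the substance is already packaged into Proposition~\ref{prop:rad-prime} (whose proof runs through the disjoint-support manipulations of Lemmas~\ref{lem:rad-prime-3} and~\ref{lem:rad-prime-4}) and into Theorem~\ref{thm1} (which rests on Theorem~\ref{thm:nil}), so what remains is a short formal deduction. The only point requiring a little care is to invoke the $\GL$-ideal version of the $\GL$-prime condition in the reverse direction, so that the dichotomy produced by primeness of $\rad(I)$ transfers cleanly back to $\rad_{\GL}(I)$.
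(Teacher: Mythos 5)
Your argument is correct, and its forward direction coincides with the paper's: apply Proposition~\ref{prop:rad-prime} to the $\GL$-prime $\rad_{\GL}(I)$ and use the identity $\rad(\rad_{\GL}(I))=\rad(I)$, which you rightly make explicit (the paper leaves it implicit). The reverse direction, however, is genuinely different from, and more economical than, the paper's. The paper first assumes the finiteness condition $(\ast)$, deduces from Proposition~\ref{prop:min} (hence ultimately from Draisma's theorem, Theorem~\ref{thm:draisma}) that $A/I$ has a unique minimal prime and therefore a unique minimal $\GL$-prime, concludes that $\rad_{\GL}(I)$ equals that $\GL$-prime, and then removes $(\ast)$ by writing $A$ as a directed union of sub-tca's satisfying $(\ast)$ and invoking Lemma~\ref{lem:prime-limit}. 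Your proof replaces all of this with a formal deduction: test $\GL$-primality of $\rad_{\GL}(I)$ on $\GL$-ideals $\fa,\fb$, use $\rad_{\GL}(I)\subset\rad(I)$ together with primality of $\rad(I)$ (note $\rad(I)$ contains all odd elements, so $A/\rad(I)$ is an ordinary commutative domain and the ideal-level dichotomy $\fa\subset\rad(I)$ or $\fb\subset\rad(I)$ is elementary), and then upgrade whichever containment holds to $\rad_{\GL}(I)$ via Proposition~\ref{prop:thm1}, which is available for arbitrary $A$. What this buys: Theorem~\ref{thm2} becomes a consequence of Theorem~\ref{thm1} and Proposition~\ref{prop:rad-prime} alone, with no finiteness hypothesis, no direct-limit reduction, and no reliance on Draisma's theorem; the paper's route instead leans on Theorem~\ref{thm3}, which it proves anyway, but gains no extra generality for this statement.
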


\begin{proof}
If $I$ is $\GL$-prime then $\rad(I)$ is prime by Proposition~\ref{prop:rad-prime}. Now suppose that $\rad(I)$ is prime and $A$ satisfies $(\ast)$. Then $A/I$ has a unique minimal prime, and therefore a unique minimal $\GL$-prime by Proposition~\ref{prop:min}. Thus there is a unique minimal $\GL$-prime $\fp$ over $I$, and so $\rad_{\GL}(I)=\fp$ is $\GL$-prime. Finally, suppose that $\rad(I)$ is prime and $A$ is arbitrary. Write $A=\bigcup A_i$ where $\{A_i\}$ is a directed family of sub tca's satisfying $(\ast)$. Then $\rad(I \cap A_i) = \rad(I) \cap A_i$ is prime, and so $\rad_{\GL}(I \cap A_i)=\rad_{\GL}(I) \cap A_i$ is $\GL$-prime by the previous case. Since $\rad_{\GL}(I) \cap A_i$ is prime for all $i$, it follows that $\rad_{\GL}(I)$ is prime by Lemma~\ref{lem:prime-limit} below.
\end{proof}

\begin{lemma} \label{lem:prime-limit}
Let $A$ be a tca, and suppose that $A=\bigcup A_i$ for some directed family $\{A_i\}$ of sub tca's. Let $\fp$ be a $\GL$-ideal of $A$. If $\fp \cap A_i$ is $\GL$-prime for all $i$ then $\fp$ is $\GL$-prime.
\end{lemma}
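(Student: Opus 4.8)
The plan is to run a routine direct-limit argument, exploiting the fact that $\GL$-primality is a condition that only sees finite length subrepresentations, each of which already lives at some finite stage $A_i$.

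First I would reduce to finite length data: as recalled in \S\ref{s:bg}, to check that $\fp$ is $\GL$-prime it is enough to verify that whenever $V, W \subset A$ are \emph{finite length} subrepresentations with $VW \subset \fp$, one has $V \subset \fp$ or $W \subset \fp$. So I fix such a pair $V, W$.

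Next I would descend $V$ and $W$ to a common stage. Every finite length polynomial representation is generated, as a $\GL$-representation, by finitely many elements (for instance, one highest weight vector in each simple summand, since each simple $\bS_\lambda(\bQ^\infty)$ is generated by any nonzero vector). Thus $V$ and $W$ together are generated by finitely many elements $a_1, \dots, a_k$ of $A = \bigcup_i A_i$; each $a_j$ lies in some $A_{i_j}$, and since the family $\{A_i\}$ is directed there is a single index $i$ with $a_1, \dots, a_k \in A_i$. Because $A_i$ is a sub-tca of $A$, it is $\GL$-stable, so the subrepresentation generated by any of these elements is again contained in $A_i$; hence $V \subset A_i$ and $W \subset A_i$. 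Now the multiplication map $V \otimes W \to A$ factors through $A_i$, so $VW \subset A_i$ and therefore $VW \subset \fp \cap A_i$. Since $\fp \cap A_i$ is a $\GL$-ideal of $A_i$ that is $\GL$-prime by hypothesis, we get $V \subset \fp \cap A_i$ or $W \subset \fp \cap A_i$, and in either case $V \subset \fp$ or $W \subset \fp$, which is what we wanted.

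There is no serious obstacle here; the only point that deserves a moment's care is the assertion that a finite length subrepresentation is finitely generated as a $\GL$-representation, which is what allows it to be captured inside a single $A_i$. Everything else is formal.
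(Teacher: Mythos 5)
Your argument is correct and is essentially the same as the paper's: reduce the $\GL$-prime condition to finite length subrepresentations $V,W$ with $VW \subset \fp$, use directedness to place $V$ and $W$ inside a single $A_i$, and apply the hypothesis that $\fp \cap A_i$ is $\GL$-prime. The extra detail you supply (finite length representations are finitely generated, so they descend to some $A_i$) is exactly the point the paper leaves implicit.
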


\begin{proof}
Let $V$ and $W$ be finite length subrepresentations of $A$ such that $VW \subset \fp$. Since $V$ adn $W$ are finite length, there is some $i$ such that $V$ and $W$ are contained in $A_i$. Thus $VW \subset \fp \cap A_i$. Since $\fp \cap A_i$ is $\GL$-prime, it follows that $V \subset \fp \cap A_i$ or $W \subset \fp \cap A_i$. Thus $V \subset \fp$ or $W \subset \fp$, and so $\fp$ is $\GL$-prime.
\end{proof}

\section{An example}
\label{s:sym2}

Let $A$ be the tca in $\Pol$ given by $A(V)=\bC \otimes \Sym(\Sym^2(V))$. Our goal is to classify the $\GL$-prime and $\GL$-radical ideals of $A$.

\subsection{The ideal lattice of $A$}

The decomposition of $A$ into irreducibles is well-known:
\begin{displaymath}
A = \bigoplus \bS_{2\lambda},
\end{displaymath}
where the sum is over all partitions $\lambda$, and $2\lambda=(2\lambda_1, 2\lambda_2, \ldots)$. See, for example,  \cite[\S I.5, Example 5]{macdonald}. For a partition $\lambda$, let $I_{\lambda}$ be the ideal of $A$ generated by $\bS_{2\lambda}$. The following result determines the ideal structure of $A$:

\begin{proposition} \label{prop:sym2ideals}
The ideal $I_{\lambda}$ is the sum of those $\bS_{2\mu}$ for which $\lambda \subset \mu$. In particular, $I_{\lambda} \subset I_{\mu}$ if and only if $\mu \subset \lambda$.
\end{proposition}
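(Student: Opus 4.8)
The plan is to establish the two inclusions
\[
I_\lambda \subseteq \bigoplus_{\lambda \subseteq \mu} \bS_{2\mu}
\qquad\text{and}\qquad
\bigoplus_{\lambda \subseteq \mu} \bS_{2\mu} \subseteq I_\lambda
\]
separately. Granting them, the last sentence is formal: $I_\lambda \subseteq I_\mu$ holds if and only if every partition $\nu \supseteq \lambda$ also satisfies $\nu \supseteq \mu$, and taking $\nu = \lambda$ shows this is equivalent to $\mu \subseteq \lambda$; the ideals $I_\lambda$ are moreover pairwise distinct because $A$ is multiplicity free as a $\GL$-representation.

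The first inclusion is the easy one. We have $I_\lambda = \bS_{2\lambda} \cdot A$, which equals the sum over all $\nu$ of the images of the multiplication maps $\bS_{2\lambda} \otimes \bS_{2\nu} \to A$; since $A$ is multiplicity free, each such image is a direct sum of certain $\bS_{2\mu}$. If $\bS_{2\mu}$ occurs in the image of $\bS_{2\lambda} \otimes \bS_{2\nu}$, then it is a quotient, hence by semisimplicity a summand, of $\bS_{2\lambda} \otimes \bS_{2\nu}$, so the Littlewood--Richardson coefficient $c^{2\mu}_{2\lambda,2\nu}$ is nonzero; this forces $2\lambda \subseteq 2\mu$, i.e.\ $\lambda \subseteq \mu$.

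For the reverse inclusion I would first reduce to showing that $\bS_{2(\nu+e_i)} \subseteq I_\nu$ whenever $\nu$ is a partition and adding a box in row $i$ to $\nu$ again yields a partition. Indeed, this implies $I_{\nu+e_i} \subseteq I_\nu$; and since any $\mu \supseteq \lambda$ is reached from $\lambda$ by a chain of single-box additions through partitions, iterating gives $\bS_{2\mu} \subseteq I_\lambda$. To treat this covering case I would pass to finite levels, writing $A(\bC^n) = \bC[X]$ for a generic symmetric $n \times n$ matrix $X$ with $n \gg 0$, and use the classical fact that the highest weight vector of $\bS_{2\lambda}(\bC^n) \subseteq \bC[X]$ is, up to a scalar, $v_\lambda = \prod_{k \ge 1} \delta_k^{\lambda_k - \lambda_{k+1}}$, where $\delta_k = \det X_{[k]}$ is the $k$-th leading principal minor (itself a highest weight vector, of weight $(2^k)$). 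When the box is added in the first row one has $v_{\lambda + e_1} = \delta_1 \cdot v_\lambda$, which lies in $A_1(\bC^n) \cdot \bS_{2\lambda}(\bC^n) \subseteq I_\lambda(\bC^n)$; since $v_{\lambda + e_1}$ generates $\bS_{2(\lambda + e_1)}$ as a $\GL$-representation, this disposes of that case.

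The real obstacle is adding a box in row $j \ge 2$, where necessarily $\lambda_{j-1} > \lambda_j$: now $v_\lambda \delta_j = v_{\lambda + e_j} \delta_{j-1}$, so $v_{\lambda + e_j}\delta_{j-1} \in I_\lambda$, but $\delta_{j-1}$ need not be a nonzerodivisor modulo $I_\lambda$, so this does not directly put $v_{\lambda + e_j}$ in $I_\lambda$. Instead I would analyze the multiplication map $m \colon \bS_{2\lambda} \otimes \Sym^2(\bC^n) \to A(\bC^n)$. By Pieri's rule, the only Schur functors of the form $\bS_{2\kappa}$ occurring in $\bS_{2\lambda} \otimes \Sym^2$ are those with $\kappa = \lambda + e_i$ for an addable box, each with multiplicity one; the image of $m$ is a sum of certain $\bS_{2\kappa}$, so it suffices to show $m$ is nonzero on the $\bS_{2(\lambda + e_j)}$-isotypic piece for each addable $e_j$. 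I would verify this by an explicit weight-vector computation --- producing a weight vector $\xi \in \bS_{2\lambda}$ and a quadratic form $\eta \in \Sym^2(\bC^n)$ whose product has nonzero projection to $\bS_{2(\lambda + e_j)}$ --- or dually by computing the bidegree $(|\lambda|,1)$ component of the coproduct $\Delta(v_{\lambda + e_j})$ in the Hopf algebra $\Sym(\Sym^2(\bC^n))$ (with $\Sym^2$ primitive) and checking, via the apolarity pairing, that it does not project to zero in $\bS_{2\lambda} \otimes \Sym^2$. A cleaner route is to invoke the straightening law for the symmetric determinantal ring $\bC[X]$ (De Concini--Procesi), under which $\bS_{2\lambda}$ is spanned by the standard monomials of shape $\lambda$ and the ideal it generates is spanned by the standard monomials of shape containing $\lambda$; this identifies $I_\lambda$ with $\bigoplus_{\lambda \subseteq \mu} \bS_{2\mu}$ at once. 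I expect the non-vanishing of $m$ on each $\bS_{2(\lambda + e_j)}$ summand --- equivalently, that multiplication is surjective onto the predicted top pieces --- to be the only genuinely technical point.
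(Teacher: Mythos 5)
Your overall frame is sound: the easy containment via Littlewood--Richardson (nonvanishing of $c^{2\mu}_{2\lambda,2\nu}$ forces $\lambda \subseteq \mu$), the reduction to single-box covers, the Pieri analysis showing that the image of $\bS_{2\lambda} \otimes \Sym^2 \to A$ can only consist of the summands $\bS_{2(\lambda+e_i)}$ for addable boxes, and the first-row case via $v_{\lambda+e_1} = \delta_1 v_\lambda$ are all correct. But the decisive step --- that the multiplication map has nonzero projection onto $\bS_{2(\lambda+e_j)}$ for every addable box in a row $j \ge 2$ --- is exactly where the content of the proposition lives, and you do not prove it. The two direct routes you mention (an explicit weight-vector computation, or a coproduct/apolarity computation) are only proposed, not carried out, and neither is a triviality: as you yourself observe, the naive identity $v_\lambda\delta_j = v_{\lambda+e_j}\delta_{j-1}$ only gives membership after multiplying by $\delta_{j-1}$, and there is no a priori nonzerodivisor argument available. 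Your ``cleaner route'' does not repair this: the straightening law for the symmetric determinantal ring gives a standard monomial basis of $\bC[X]$, but the further assertion that the ideal generated by the shape-$\lambda$ piece is spanned by the standard monomials of shape containing $\lambda$ is not a formal consequence of straightening --- it is precisely the theorem of Abeasis \cite{abeasis} (the symmetric analogue of \cite[Theorem~4.1]{CEP}), i.e., the statement being proved, recast in standard-monomial language. So as written the argument either has a gap at its one technical point or collapses into a citation of the known result.

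For comparison, the paper does not give a from-scratch proof either: it cites \cite{abeasis}, notes that the $\Sym(\lw^2)$ case \cite[Theorem~3.1]{pfaffians} implies this one via the transpose/super identity $\Sym(\Sym^2(V)) = \Sym(\lw^2(V[1]))$, and points to \cite{CEP} and \cite[Corollary~2.8]{periplectic} for related statements. If you are content to cite, your fallback is acceptable and essentially coincides with the paper's approach; if you want a self-contained proof, you must actually establish the surjectivity onto each $\bS_{2(\lambda+e_j)}$ summand (for instance by exhibiting an element of $\bS_{2\lambda}(\bC^n)\cdot \Sym^2(\bC^n)$ with nonzero component in $\bS_{2(\lambda+e_j)}$, e.g.\ by pairing against a suitable lowest/highest weight vector, or by adapting the Pfaffian computation of \cite{pfaffians} through the super identity), and that is the work your proposal currently defers.
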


\begin{proof}
This was originally proved in \cite{abeasis}, but that is a difficult reference to obtain. The analogous result for $\Sym(\lw^2)$ is proved in \cite[Theorem~3.1]{pfaffians}. That case actually implies this one, since $\Sym(\Sym^2(V))=\Sym(\lw^2(V[1]))$. A complete proof in the case where $\lambda$ is a rectangle also appears in \cite[Corollary~2.8]{periplectic}. A closely related result appears in \cite[Theorem~4.1]{CEP}.
\end{proof}

The ideals generated by rectangular shapes will be particularly important, so we introduce some notation for them. We let $\rho(r,s)$ be the partition with $r$ rows each of length $s$; thus the Young diagram for $\rho(r,s)$ is an $r \times s$ rectangle. We let $I_{r,s}=I_{\rho(r,s)}$. If $r=0$ or $s=0$ then $\rho(r,s)$ is an empty partition and $I_{r,s}$ is the unit ideal.

Let $\lambda$ be a partition. By a \emph{corner} of $\lambda$ we mean a pair $(r,s)$ such that $\lambda$ has a box in the $r$th row and $s$th column, but no box below or to the right of this one. For example, in the following Young diagram the corners have been shaded:
\begin{displaymath}
\ydiagram[*(white)]{6,5,4,2,1,1}*[*(gray)]{6,6,5,3,2,1,1}
\end{displaymath}
The following observation illustrates the importance of the rectangular ideals.

\begin{proposition} \label{prop:corners}
Let $\cC$ be the set of corners of $\lambda$. Then $I_{\lambda}=\bigcap_{(r,s) \in \cC} I_{r,s}$.
\end{proposition}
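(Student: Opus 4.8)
The plan is to reduce the statement to an elementary combinatorial claim about partitions via Proposition~\ref{prop:sym2ideals}. Since $A=\bigoplus_{\mu}\bS_{2\mu}$ is a multiplicity-free decomposition into pairwise non-isomorphic irreducibles, every $\GL$-subrepresentation of $A$ — in particular every $\GL$-ideal — is the direct sum of the $\bS_{2\mu}$ that it contains, and the intersection of a family of $\GL$-ideals corresponds to the intersection of the associated sets of partitions. By Proposition~\ref{prop:sym2ideals}, $I_{\nu}$ is the sum of those $\bS_{2\mu}$ with $\nu\subset\mu$, so $I_{\lambda}=\bigoplus_{\lambda\subset\mu}\bS_{2\mu}$, while $\bigcap_{(r,s)\in\cC}I_{r,s}=\bigoplus_{\mu}\bS_{2\mu}$, the sum being over those $\mu$ with $\rho(r,s)\subset\mu$ for all $(r,s)\in\cC$. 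Thus it suffices to prove: for a partition $\mu$, one has $\lambda\subset\mu$ if and only if $\rho(r,s)\subset\mu$ for every corner $(r,s)$ of $\lambda$. (When $\lambda$ is empty, $\cC=\varnothing$ and both sides are the unit ideal, so we may assume $\lambda\neq\varnothing$.)

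For the forward direction I would note that each corner $(r,s)$ of $\lambda$ satisfies $\rho(r,s)\subset\lambda$: indeed $\lambda_j\ge\lambda_r=s$ for all $j\le r$, so the $r\times s$ rectangle fits inside the Young diagram of $\lambda$. Hence $\lambda\subset\mu$ immediately gives $\rho(r,s)\subset\mu$ for every corner. For the converse — the one direction with any content — the key observation is that every nonempty row of $\lambda$ is ``controlled'' by a corner weakly to its south: given a row index $i$ with $\lambda_i>0$, set $s=\lambda_i$ and let $r=\max\{j:\lambda_j=s\}$, so that $r\ge i$ and $\lambda_{r+1}<\lambda_r=s$, which makes $(r,s)$ a corner of $\lambda$. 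Using the equivalence ``$\rho(r,s)\subset\mu$ iff $\mu_r\ge s$'' (valid since $\mu$ is weakly decreasing), the hypothesis applied to this corner yields $\mu_r\ge s$, and therefore $\mu_i\ge\mu_r\ge s=\lambda_i$. Since $i$ was arbitrary, $\lambda\subset\mu$.

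I do not anticipate a serious obstacle: the whole argument is a short bookkeeping exercise once Proposition~\ref{prop:sym2ideals} is in hand. The only points requiring care are the translation ``$\rho(r,s)\subset\mu \iff \mu_r\ge s$'', the precise description of corners (a corner of $\lambda$ is a row $r$ with $\lambda_r>\lambda_{r+1}$ and $s=\lambda_r$), and the trivial edge cases ($\lambda$ empty, or rows with $\lambda_i=0$, where the inequality $\lambda_i\le\mu_i$ is automatic).
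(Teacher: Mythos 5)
Your proposal is correct and follows essentially the same route as the paper: both reduce, via Proposition~\ref{prop:sym2ideals} and the multiplicity-free decomposition of $A$, to the combinatorial fact that $\lambda\subset\mu$ exactly when every corner rectangle $\rho(r,s)$ of $\lambda$ is contained in $\mu$. The paper simply records this as the observation $\lambda=\bigcup_{(r,s)\in\cC}\rho(r,s)$, whereas you spell out the verification; no substantive difference.
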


\begin{proof}
Let $\mu$ be a partition. We have
\begin{align*}
\bS_{2\mu} \subset \bigcap\nolimits_{(r,s) \in \cC} I_{r,s}
&\iff \rho(r,s) \subset \mu \ \text{for all $(r,s) \in \cC$} \\[-4pt]
&\iff \bigcup\nolimits_{(r,s) \in \cC} \rho(r,s) \subset \mu \\
&\iff \lambda \subset \mu \\
&\iff \bS_{2\mu} \subset I_{\mu}.
\end{align*}
The first and last step follow from Proposition~\ref{prop:sym2ideals}, the second is trivial, and the third is simply the observation that $\lambda=\bigcup_{(r,s) \in \cC} \rho(r,s)$. The result thus follows.
\end{proof}

\subsection{The variety $X$}

For $V=E \oplus F[1]$, let $B(V) = \Sym(\Sym^2(E) \oplus \lw^2(F))$. Note that $B(V)=A(V)/\rad(A(V))$. We regard $B$ as a 2-variable tca (in the variables $E$ and $F$). Let $X(V)=\Spec(B(V))$, which we identify with $\Sym^2(E)^* \times \lw^2(F)^*$. By a ``closed subvariety'' of $X$, we mean a subfunctor $Y$ of $X$ such that $Y(V)$ is a closed subvariety of $X(V)$ for all finite dimensional $V$. Closed subvarieties of $X$ correspond bijectively to $\GL \times \GL$ stable radical ideals of $B$. For $r,s \in \bN \cup \{\infty\}$, let $X_{r,s}(V) \subset X(V)$ be the locus of pairs $(\omega, \eta)$ such that $\rank(\omega) \le r$ and $\rank(\eta) \le 2s$. Then $X_{r,s}$ is a closed subvariety of $X$ in the above sense. We now show that these account for essentially all examples:

\begin{proposition}
Let $Y$ be a closed subvariety of $X$. Then there is a finite subset $\cC$ of $(\bN \cup \{\infty\})^2$ such that $Y=\bigcup_{(r,s) \in \cC} X_{r,s}$.
\end{proposition}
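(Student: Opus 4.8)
The plan is to reduce, at each finite level, to the classical orbit structure on symmetric and alternating forms, and then to glue the level‑wise descriptions together using functoriality. First I would analyze $Y(V)$ for a single finite‑dimensional $V=E\oplus F[1]$. The group $\GL(E)\times\GL(F)$ acts on $X(V)=\Sym^2(E)^*\times\lw^2(F)^*$ with only finitely many orbits: an orbit consists of the pairs $(\omega,\eta)$ with $\rank\omega$ and $\rank\eta$ prescribed, because over $\bC$ the congruence classes of symmetric bilinear forms are classified by rank and those of alternating forms by their (necessarily even) rank. Each orbit is irreducible, and since closure commutes with finite products, the closure of the orbit with $\rank\omega=r$, $\rank\eta=2s$ is $\ol{\{\rank\le r\}}\times\ol{\{\rank\le 2s\}}=X_{r,s}(V)$. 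Hence every closed $\GL(E)\times\GL(F)$‑stable subset of $X(V)$, in particular $Y(V)$, is a finite union of the $X_{r,s}(V)$.

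Next I would use functoriality in an essential way. A split inclusion $V=E\oplus F[1]\hookrightarrow V'=E'\oplus F'[1]$ together with a choice of splitting induces algebra maps $B(V)\to B(V')\to B(V)$ composing to the identity, hence maps of varieties $\pi\colon X(V')\to X(V)$ (restriction of forms) and $\iota\colon X(V)\to X(V')$ (extension by zero) with $\pi\circ\iota=\id$; since $Y$ is a subfunctor of the contravariant functor $X$, \emph{both} $\pi$ and $\iota$ carry $Y$ into $Y$. From this I would deduce the key claim: if $Y(V)$ contains a point with $\rank\omega=r$, $\rank\eta=2s$ for some $V$, then $X_{r,s}\subseteq Y$ as subfunctors. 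Indeed, for $V'=V\oplus V''$ with $V''$ arbitrary, $\iota$ sends the point into $Y(V')$ keeping its rank type, so its $\GL(E')\times\GL(F')$‑orbit closure $X_{r,s}(V')$ lies in $Y(V')$; applying $\pi$ to the inclusion $V''\hookrightarrow V'$ then gives $X_{r,s}(V'')=\pi(X_{r,s}(V'))\subseteq Y(V'')$, using that ranks only drop under restriction and that the capped rank locus is attained by zero‑extensions.

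Finally I would assemble $\cC$. For $r\in\bN$ set $s(r)=\sup\{s\in\bN:X_{r,s}\subseteq Y\}\in\bN\cup\{\infty\}$, defined precisely when this set is nonempty, and let $R\subseteq\bN$ be the set of such $r$. Then $R$ is a down‑set and $s$ is non‑increasing on $R$ (since $X_{r,s}\subseteq X_{r',s}$ for $r\le r'$), and using $X_{r,\infty}=\bigcup_s X_{r,s}$ together with the fact that $Y$ is a subfunctor one sees $X_{r,s(r)}\subseteq Y$ and that $R$ is either finite or all of $\bN$. If $R$ is finite, put $\cC=\{(r,s(r)):r\in R\}$. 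If $R=\bN$, then $s(r)$ is non‑increasing and bounded below by $s(\infty):=\sup\{s:X_{\infty,s}\subseteq Y\}$, hence equal to $s(\infty)$ for all $r\ge r_0$ for some $r_0$, and one puts $\cC=\{(r,s(r)):r\le r_0\}\cup\{(\infty,s(\infty))\}$. In either case $\bigcup_{(r,s)\in\cC}X_{r,s}$ is a finite union; it contains $Y$ because any point of $Y(V)$ of rank type $(r,2s)$ lies in $X_{r,s(r)}(V)$ by the key claim, and $(r,s(r))\in\cC$ (or $X_{r,s(r)}\subseteq X_{\infty,s(\infty)}$ when $r>r_0$); and it is contained in $Y$ by the definition of the $s(r)$. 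Discarding redundant members makes $\cC$ minimal.

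The step I expect to be the main obstacle is the key claim above: upgrading the presence of a single point of $Y$ to the functorial containment $X_{r,s}\subseteq Y$. It relies on the facts that inclusions of finite‑dimensional spaces split — so that $Y$ is stable under both restriction and zero‑extension — and that the relevant ranks saturate at the ambient dimension; this saturation is exactly the phenomenon that forces the entries $\infty$ into $\cC$. The first paragraph invokes only standard facts about symmetric and alternating forms over $\bC$, which should be cited, and the combinatorial bookkeeping in the last paragraph is then routine.
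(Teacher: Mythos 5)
Your proposal is correct and follows essentially the same route as the paper: in both arguments the heart is the claim that a single point of $Y$ of rank type $(r,2s)$ forces $X_{r,s} \subseteq Y$ as subfunctors, followed by the same down-set bookkeeping over $(\bN \cup \{\infty\})^2$. The only real difference is how that claim is proved: you route through zero-extension, the $\GL(E')\times\GL(F')$-orbit closure (using that $Y(V')$ is closed and stable), and then restriction, whereas the paper obtains it in one step by pulling the witness point back along arbitrary linear maps $E \to E_0$, $F \to F_0$, which uses only subfunctoriality and does not even need closedness of $Y(V)$.
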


\begin{proof}
By the \emph{rank} of a point $(\omega, \eta) \in X(E,F)$, we mean the pair $(\rank(\omega), \tfrac{1}{2} \rank(\eta))$. Let $S \subset \bN^2$ be the set of pairs $(r,s)$ such that $Y(E,F)$ has a point of rank $(r,s)$ for some $E$ and $F$. We claim that $(r,s) \in S$ if and only if $X_{r,s} \subset Y$. It is clear that $X_{r,s} \subset Y$ implies $(r,s) \in S$. Conversely, suppose that $(r,s) \in S$. Then there exists some $(\omega_0, \eta_0) \in Y(E_0,F_0)$ of rank $(r,s)$ for some $E_0$ and $F_0$. Let $(\omega, \eta) \in X_{r,s}(E,F)$ be given. By basic linear algebra, there are linear maps $\phi \colon E \to E_0$ and $\psi \colon F \to F_0$ such that $\omega=\phi^*(\omega_0)$ and $\eta=\psi^*(\eta_0)$. Thus the map $X(E_0,F_0) \to X(E,F)$ defined by $(\phi, \psi)$ carries $(\omega_0, \eta_0)$ to $(\omega, \eta)$. Since $(\omega_0, \eta_0) \in Y(E_0,F_0)$ and $Y$ is a subfunctor of $X$, it follows that $(\omega, \eta) \in Y(E,F)$. This proves the claim.

It now follows that if $(r,s) \in S$ and $(r',s') \le (r,s)$ then $(r',s') \in S$, where here $(r',s') \le (r,s)$ means $r' \le r$ and $s' \le s$. A simple combinatorial argument now shows that there is a finite subset $\cC$ of $(\bN \cup \{\infty\})^2$ such that $(r',s') \in S$ if and only if $(r',s') \le (r,s)$ for some $(r,s) \in \cC$. It follows that $Y$ is the union of the $X_{r,s}$ with $(r,s) \in \cC$, which proves the proposition.
\end{proof}

\begin{corollary} \label{cor:irredX}
Any irreducible closed subvariety of $X$ is one of the $X_{r,s}$.
\end{corollary}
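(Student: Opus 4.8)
The plan is to read this off the preceding proposition together with the defining property of irreducibility. Since an irreducible closed subvariety $Y$ of $X$ is by convention nonempty, the proposition lets us write $Y=\bigcup_{(r,s)\in\cC}X_{r,s}$ for some nonempty finite subset $\cC\subset(\bN\cup\{\infty\})^2$. (If desired, one may first shrink $\cC$ to an antichain by discarding any $(r,s)\in\cC$ with $(r,s)\le(r'',s'')$ for some other $(r'',s'')\in\cC$, using $X_{r,s}\subset X_{r'',s''}$ in that situation; but this reduction is not needed below.)

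Each $X_{r,s}$ with $(r,s)\in\cC$ is a closed subvariety of $X$ contained in $Y$, hence a closed subvariety of $Y$. Now invoke irreducibility of $Y$: $Y$ is not a union of two strictly smaller closed subvarieties, so from the finite decomposition $Y=\bigcup_{(r,s)\in\cC}X_{r,s}$ an immediate induction on $\#\cC$ forces $Y=X_{r,s}$ for a single $(r,s)\in\cC$. That completes the argument.

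The one point that warrants a moment's care is confirming that the operative notion of irreducibility for a closed subvariety of $X$ interacts with finite unions in the expected way. With the subfunctor definition (a closed subvariety $Y$ is irreducible iff it is nonempty and cannot be written as a union of two strictly smaller closed subvarieties) this is built in. With a definition via primality of the associated $\GL\times\GL$-stable radical ideal of $B$, it likewise follows: a finite union of closed subvarieties corresponds to the intersection of the associated radical ideals, and a prime ideal containing a finite intersection of ideals must contain one of them. Either way there is no real obstacle, so this is essentially a formal consequence of the proposition.
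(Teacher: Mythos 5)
Your argument is correct and matches the paper, which states this corollary without further proof as an immediate consequence of the preceding proposition: a finite union decomposition $Y=\bigcup_{(r,s)\in\cC}X_{r,s}$ plus irreducibility forces $Y=X_{r,s}$ for a single pair. Your remark on how irreducibility (whether phrased via the subfunctor or via primality of the associated radical ideal of $B$) interacts with finite unions is exactly the only point needing care, and you handle it correctly.
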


\subsection{The vanishing locus of $I_{r,s}$}

The goal of this section is to prove the following:

\begin{proposition}
We have $\cV(I_{r+1,s+1}(V))=X_{r,s}(V)$ for $r,s \ge 0$.
\end{proposition}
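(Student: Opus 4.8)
The plan is to prove the two inclusions separately, in both cases leaning on the explicit decomposition $I_{\lambda}=\bigoplus_{\mu\supseteq\lambda}\bS_{2\mu}$ from Proposition~\ref{prop:sym2ideals} and on the vanishing criterion of Proposition~\ref{prop:schur-eval}. I will use repeatedly that $X$ is contravariant in $V$: a linear map $\phi\colon V\to W$ of finite-dimensional super vector spaces induces an algebra map $A(\phi)\colon A(V)\to A(W)$, hence $X(\phi)\colon X(W)\to X(V)$; moreover $A(\phi)$ respects the Schur decomposition, so it carries $I_{r+1,s+1}(V)$ into $I_{r+1,s+1}(W)$ and the generator $\bS_{2\rho(r+1,s+1)}(V)$ into $\bS_{2\rho(r+1,s+1)}(W)$.

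For the inclusion $X_{r,s}(V)\subseteq\cV(I_{r+1,s+1}(V))$: a point $(\omega,\eta)$ with $\rank(\omega)\le r$ and $\rank(\eta)\le 2s$ lies in the image of $X(q)\colon X(\bQ^{r|2s})\to X(V)$ for a suitable map $q\colon V\to\bQ^{r|2s}$ (the same elementary linear algebra as in the proof of the preceding proposition: such a form factors through a quotient of that dimension). It then suffices to show $I_{r+1,s+1}(\bQ^{r|2s})=0$, for then any $f\in I_{r+1,s+1}(V)$ satisfies $A(q)(f)\in I_{r+1,s+1}(\bQ^{r|2s})=0$ and so vanishes at $(\omega,\eta)$. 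And indeed, if $\mu\supseteq\rho(r+1,s+1)$ then $\mu_{r+1}\ge s+1$, so $(2\mu)_{r+1}\ge 2s+2>2s$, whence $\bS_{2\mu}(\bQ^{r|2s})=0$ by Proposition~\ref{prop:schur-eval}; summing over such $\mu$ gives $I_{r+1,s+1}(\bQ^{r|2s})=0$.

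For the reverse inclusion I argue contrapositively: given $(\omega,\eta)\in X(V)$ with $\rank(\omega)\ge r+1$ or $\rank(\eta)\ge 2s+2$, I produce an element of $I_{r+1,s+1}(V)$ not vanishing at $(\omega,\eta)$. If $\rank(\omega)\ge r+1$, choose an $(r+1)$-dimensional $E_0\subseteq E$ on which $\omega$ restricts to a nondegenerate form and let $\iota\colon\bQ^{r+1|0}\hookrightarrow V$ have image $E_0$, so $X(\iota)(\omega,\eta)=\omega|_{E_0}$ is nondegenerate. Here $\bS_{2\rho(r+1,s+1)}(\bQ^{r+1|0})=\bS_{((2s+2)^{r+1})}(\bQ^{r+1})=(\lw^{r+1}\bQ^{r+1})^{\otimes 2(s+1)}$ is a line which, inside $A(\bQ^{r+1|0})=\Sym(\Sym^2\bQ^{r+1})$, is spanned by $(\det)^{s+1}$, the $(s+1)$st power of the determinant of a symmetric form, and is therefore nonzero at $\omega|_{E_0}$; pushing a generator $g$ of this line forward along $\bS_{2\rho(r+1,s+1)}(\iota)$ yields $f\in\bS_{2\rho(r+1,s+1)}(V)\subseteq I_{r+1,s+1}(V)$ with $(\omega,\eta)(f)=(\omega|_{E_0})(g)\ne 0$. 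The case $\rank(\eta)\ge 2s+2$ is symmetric: take a purely odd $\iota\colon\bQ^{0|2s+2}\hookrightarrow V$ whose image is a $(2s+2)$-dimensional subspace $F_0$ of $F$ on which $\eta$ is nondegenerate, and use that $\bS_{2\rho(r+1,s+1)}(\bQ^{0|2s+2})=\bS_{((r+1)^{2s+2})}(\bQ^{2s+2})$ is the line inside $A(\bQ^{0|2s+2})=\Sym(\lw^2\bQ^{2s+2})$ spanned by $\mathrm{Pf}^{r+1}$, which is nonzero at the nondegenerate $\eta|_{F_0}$.

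The elementary linear-algebra inputs (a form of rank $\le r$ factors through an $r$-dimensional quotient; a form of rank $\ge r+1$ has an $(r+1)$-dimensional nondegenerate restriction; likewise for alternating forms in even dimensions) I would not belabor. The step I expect to require the most care is the identification of the evaluations of the generator $\bS_{2\rho(r+1,s+1)}$ on $\bQ^{r+1|0}$ and on $\bQ^{0|2s+2}$ with a power of the determinant, respectively of the Pfaffian. This rests on the plethysms $\Sym(\Sym^2 W)=\bigoplus_\mu\bS_{2\mu}(W)$ and $\Sym(\lw^2 W)=\bigoplus_\mu\bS_{(2\mu)'}(W)$ (the latter coming from $\Sym^2(W[1])=\lw^2 W$ and the super Schur rule $\bS_{2\mu}(W[1])=\bS_{(2\mu)'}(W)$), the identity $\bS_{(c^n)}(\bQ^n)=(\lw^n\bQ^n)^{\otimes c}$, the transpose computation $(2\rho(r+1,s+1))'=((r+1)^{2s+2})$, and the classical fact that the resulting one-dimensional semiinvariant lines are spanned by powers of the determinant and of the Pfaffian, which vanish exactly on the degenerate forms. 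Keeping the variance of $X$ and of the Schur functors straight is the other thing to watch.
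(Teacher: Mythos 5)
Your proof is correct, and for half of the statement it takes a genuinely different route from the paper. For the inclusion $X_{r,s}(V)\subseteq\cV(I_{r+1,s+1}(V))$ you and the paper rely on the same underlying fact, namely that every $\bS_{2\mu}$ with $\mu\supseteq\rho(r+1,s+1)$ dies on $\bQ^{r|2s}$, but you package it more directly: you factor each point of rank at most $(r,2s)$ through $\bQ^{r|2s}$ and observe $I_{r+1,s+1}(\bQ^{r|2s})=0$ via Propositions~\ref{prop:sym2ideals} and~\ref{prop:schur-eval}, whereas the paper constructs a universal map $h\colon\Sym(\Sym^2(V))\to\Sym(\bC^{r|2s}\otimes V)$ from an orthosymplectic form on $\bC^{r|2s}$ and invokes the Cauchy decomposition; the two arguments are essentially equivalent, yours being slightly more economical. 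For the reverse inclusion the approaches genuinely diverge: the paper computes $\rad(I_{r+1,s+1}(E))=I_{r+1,1}(E)$ and $\rad(I_{r+1,s+1}(F[1]))=I_{1,s+1}(F)$ and then quotes the classical determinantal and Pfaffian descriptions of the corresponding vanishing loci, while you argue contrapositively by producing, at any point of too-large rank, an explicit element of $I_{r+1,s+1}(V)$ that does not vanish there, namely the image of the one-dimensional constituent $\bS_{2\rho(r+1,s+1)}(\bQ^{r+1|0})=\bC\cdot\det^{\,s+1}$, respectively $\bS_{2\rho(r+1,s+1)}(\bQ^{0|2s+2})=\bC\cdot\mathrm{Pf}^{\,r+1}$, pushed forward along an embedding onto a subspace where the relevant form is nondegenerate. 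Your version buys self-containedness: it avoids the radical computations and the citations to Abeasis and Abeasis--Del Fra, needing only the multiplicity-one identification of the rectangular semiinvariant lines with powers of the determinant and Pfaffian (which, as you note, is the step requiring care, but follows quickly from a weight argument since those characters occur exactly once in the plethysm). The paper's version buys a bit more information, namely the precise radicals of the ideals restricted to the purely even and purely odd directions, not just their zero loci. The only points I would make explicit are the ones you use silently: containment of these closed sets can be checked on $\bC$-points, and $A(\phi)$ preserves each $\bS_{2\mu}$-isotypic piece, hence each $I_\lambda$.
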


We break the proof into two lemmas.

\begin{lemma}
We have the following:
\begin{enumerate}
\item We have $\rad(I_{r+1,s+1}(E))=I_{r+1,1}(E)$ in $\Sym(\Sym^2(E))$.
\item We have $\rad(I_{r+1,s+1}(F[1]))=I_{1,s+1}(F)$ in $\Sym(\lw^2(F))$.
\item We have $\cV(I_{r+1,s+1}(V)) \subset X_{r,s}(V)$.
\end{enumerate}
\end{lemma}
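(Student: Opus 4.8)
The plan is to prove (a) and (b) as two instances of the classical theory of symmetric‑determinantal and Pfaffian varieties, and then to deduce (c) formally from them by functoriality.

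\emph{Part (a).} By Proposition~\ref{prop:sym2ideals}, $I_{r+1,1}(E)$ is the ideal of $\Sym(\Sym^2(E))$ generated by $\bS_{2\rho(r+1,1)}(E)\subseteq\Sym^{r+1}(\Sym^2(E))$; identifying $\Sym(\Sym^2(E))$ with the ring of functions on the space $\Sym^2(E)^*$ of symmetric forms, this subspace is the span of the $(r+1)\times(r+1)$ minors of the generic symmetric matrix, so $I_{r+1,1}(E)$ is the symmetric determinantal ideal, which is prime (hence radical) with $\cV(I_{r+1,1}(E))=\{\,\omega:\rank(\omega)\le r\,\}$; see \cite{CEP,abeasis}. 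Since $\rho(r+1,1)\subseteq\rho(r+1,s+1)$, Proposition~\ref{prop:sym2ideals} gives $I_{r+1,s+1}(E)\subseteq I_{r+1,1}(E)$, hence $\rad(I_{r+1,s+1}(E))\subseteq I_{r+1,1}(E)$; for the reverse it suffices to show $\cV(I_{r+1,s+1}(E))\subseteq\{\rank\le r\}$. Let $v$ be a highest weight vector of the copy of $\bS_{2\rho(r+1,1)}(E)$ in degree $r+1$ — up to scalar an $(r+1)\times(r+1)$ minor. Then $v^{s+1}$ is a highest weight vector of weight $2\rho(r+1,s+1)$, and since $\Sym(\Sym^2(E))$ is multiplicity free it generates the copy of $\bS_{2\rho(r+1,s+1)}(E)$, whence $v^{s+1}\in I_{r+1,s+1}(E)$. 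As $I_{r+1,s+1}(E)$ is $\GL(E)$-stable, $(gv)^{s+1}\in I_{r+1,s+1}(E)$ for all $g\in\GL(E)$, and $\{gv\}$ spans $\bS_{2\rho(r+1,1)}(E)$; so at any form of rank $\ge r+1$, where some $(r+1)$-minor and hence some $gv$ is nonzero, some element of $I_{r+1,s+1}(E)$ is nonzero. This gives $\cV(I_{r+1,s+1}(E))\subseteq\{\rank\le r\}$ and therefore $\rad(I_{r+1,s+1}(E))=I_{r+1,1}(E)$. (Equivalently, $I_{r+1,s+1}(E)$ is the $(s+1)$st symbolic power of the prime $I_{r+1,1}(E)$.)

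\emph{Part (b).} Run the same argument in $A(F[1])=\Sym(\lw^2(F))$, the ring of functions on the space $\lw^2(F)^*$ of alternating forms, with the symmetric determinantal ideal replaced by the Pfaffian ideal and the rank bound $r$ replaced by $2s$. Here $\bS_{2\rho(1,s+1)}(F[1])$ is identified (up to super shift) with $\lw^{2s+2}(F)\subseteq\Sym^{s+1}(\lw^2(F))$, the span of the $(2s+2)\times(2s+2)$ sub‑Pfaffians of the generic alternating matrix, so $I_{1,s+1}(F[1])$ is the Pfaffian ideal cutting out $\{\,\eta:\rank(\eta)\le 2s\,\}$, which is prime \cite{pfaffians}; Proposition~\ref{prop:sym2ideals} gives $I_{r+1,s+1}(F[1])\subseteq I_{1,s+1}(F[1])$; and the $(r+1)$st power of a $(2s+2)$-sub‑Pfaffian is a highest weight vector generating the copy of $\bS_{2\rho(r+1,s+1)}(F[1])$, hence lies in $I_{r+1,s+1}(F[1])$, which forces $\cV(I_{r+1,s+1}(F[1]))\subseteq\{\rank\le 2s\}$. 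Thus $\rad(I_{r+1,s+1}(F[1]))=I_{1,s+1}(F[1])$.

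\emph{Part (c).} Write $V=E\oplus F[1]$. The closed immersions $E\hookrightarrow V$ and $F[1]\hookrightarrow V$ induce, via functoriality of $A$ and passage to reduced quotients, the inclusions $\Sym(\Sym^2(E))\hookrightarrow B(V)$ and $\Sym(\lw^2(F))\hookrightarrow B(V)$, i.e.\ the two projections $\pi_E\colon X(V)\to\Sym^2(E)^*$, $(\omega,\eta)\mapsto\omega$, and $\pi_F\colon X(V)\to\lw^2(F)^*$, $(\omega,\eta)\mapsto\eta$. By functoriality of the Schur functor, the generator $\bS_{2\rho(r+1,s+1)}(E)$ of $I_{r+1,s+1}(E)$ maps into the generator $\bS_{2\rho(r+1,s+1)}(V)$ of $I_{r+1,s+1}(V)$, so the ideal $I_{r+1,s+1}(E)$ maps into $I_{r+1,s+1}(V)$; likewise for $F[1]$. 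Hence if $(\omega,\eta)\in\cV(I_{r+1,s+1}(V))$ then $\omega\in\cV(I_{r+1,s+1}(E))$ and $\eta\in\cV(I_{r+1,s+1}(F[1]))$, so $\rank(\omega)\le r$ and $\rank(\eta)\le 2s$ by (a) and (b); that is, $(\omega,\eta)\in X_{r,s}(V)$. The one delicate point is the bookkeeping in (a) and (b) — checking that the relevant power of a minor or Pfaffian genuinely lies in the rectangular ideal $I_{r+1,s+1}$, and identifying the generating subspaces with minors and Pfaffians; the rest is a direct appeal to Proposition~\ref{prop:sym2ideals}, functoriality, and the classical primality and vanishing‑locus statements.
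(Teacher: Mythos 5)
Your proof is correct, but for parts (a) and (b) it takes a genuinely different route from the paper. The paper's argument (due to Sam) never leaves the level of ideals: using the full strength of Proposition~\ref{prop:sym2ideals} it writes down the decompositions of $A(E)/I_{r+1,s+1}(E)$ and $A(E)/I_{r+1,1}(E)$ into Schur functors, observes that the kernel of the surjection between them consists of the $\bS_{2\lambda}(E)$ with $w(\lambda)\le s$ and $\ell(\lambda)>r$, and notes that this is finite dimensional (since $\bS_{2\lambda}(E)=0$ once $\ell(\lambda)>\dim E$) and of positive degree, hence nilpotent; this gives $I_{r+1,1}(E)\subset\rad(I_{r+1,s+1}(E))$ with no appeal to the Nullstellensatz or to points. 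You instead use Proposition~\ref{prop:sym2ideals} only for the easy containment $I_{r+1,s+1}\subset I_{r+1,1}$, and get the reverse inclusion geometrically: the power $v^{s+1}$ of the highest weight vector (the leading principal $(r+1)$-minor, resp.\ the principal $(2s+2)$-sub-Pfaffian raised to the $(r+1)$st power) lies in the rectangular ideal by multiplicity-freeness, forcing the vanishing locus into the rank stratum, and then the Nullstellensatz plus the classical primality of the determinantal/Pfaffian ideal finishes. Both routes ultimately invoke the same classical inputs (the identification of $I_{r+1,1}$ and $I_{1,s+1}$ with the prime determinantal and Pfaffian ideals and their rank loci); yours buys an explicit element-level witness and the pleasant reinterpretation of $I_{r+1,s+1}$ as a symbolic power, while the paper's buys a Nullstellensatz-free, purely representation-theoretic nilpotence argument that does not need to know which explicit polynomials span the generating representation. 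Your part (c) is essentially the paper's proof of (iii). Two cosmetic points: say explicitly that the highest weight vectors are the \emph{leading principal} minor and sub-Pfaffian (a general minor or sub-Pfaffian is only a weight vector), and note that when $\dim E\le r$ or $\dim F\le 2s+1$ both sides of (a), resp.\ (b), vanish so the claims are trivial.
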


\begin{proof}
(i) This is proved in \cite{abeasis}, but we include an argument (due to Steven Sam) to be self-contained. By Proposition~\ref{prop:sym2ideals}, we have $I_{r+1,s+1}(E) \subset I_{r+1,1}(E)$. We thus have a surjection $\pi \colon A(E)/I_{r+1,s+1}(E) \to A(E)/I_{r+1,1}(E)$. By Proposition~\ref{prop:sym2ideals}, we have
\begin{align*}
A(E)/I_{r+1,1}(E) &= \bigoplus_{\ell(\lambda) \le r} \bS_{2\lambda}(E), \\
A(E)/I_{r+1,s+1}(E) &= \bigoplus_{\ell(\lambda) \le r} \bS_{2\lambda}(E) + \bigoplus_{w(\lambda) \le s} \bS_{2\lambda}(E)
\end{align*}
We thus see that $\ker(\pi)$ is the sum of those $\bS_{\lambda}(E)$'s with $w(\lambda) \le s$ and $\ell(\lambda)>r$. However, $\bS_{\lambda}(E)=0$ if $\ell(\lambda)>\dim(E)$, and so there are only finitely many relevant such $\lambda$. Thus $\ker(\pi)$ is finite dimensional, and therefore nilpotent (since it is homogeneous and consists of positive degree elements), and so the claim follows.

(ii) This is proved in \cite[Theorem~5.1]{pfaffians}. We can also argue analogously to the  above.

(iii) Since $I_{r+1,s+1}(V)$ contains both $I_{r+1,s+1}(E)$ and $I_{r+1,s+1}(F[1])$, we see that its radical contains both $I_{r+1,1}(E)$ and $I_{1,s+1}(F[1])$ by (i) and (ii), and thus the extensions of these ideals to $A(V)$. It follows that $\cV(I_{r+1,s+1}(V))$ is contained in the intersection of $\cV(I_{r+1,1}(E)^e)$ and $\cV(I_{1,s+1}(F[1])^e)$. Now, $I_{r+1,1}(E)$ is the classical determinantal ideal: its vanishing locus in $\Sym^2(E)^*$ consists of those forms rank $\le r$. The vanishing locus of $I_{r+1,1}(E)^e$ is thus $X_{r,\infty}(V)$. Similarly, the vanishing locus of $I_{1,s+1}(F[1])^e$ is $X_{\infty,s}(V)$. We thus find that $\cV(I_{r+1,s+1}(V))$ is contained in $X_{r,\infty}(V) \cap X_{\infty,s}(V)=X_{r,s}(V)$.
\end{proof}

\begin{lemma}
We have $X_{r,s}(V) \subset V(I_{r+1,s+1}(V))$.
\end{lemma}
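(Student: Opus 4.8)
I would run the same subfunctor-plus-genericity argument used in the proof of the classification of closed subvarieties of $X$ (the proposition preceding Corollary~\ref{cor:irredX}). First note that $\cV(I_{r+1,s+1})$ is a subfunctor of $X$: for a linear map $(\phi,\psi)\colon(E,F)\to(E_0,F_0)$, the induced ring homomorphism $A(E\oplus F[1])\to A(E_0\oplus F_0[1])$ carries $I_{r+1,s+1}(E\oplus F[1])$ into $I_{r+1,s+1}(E_0\oplus F_0[1])$ (since $I_{r+1,s+1}$ is a subfunctor of $A$ and this is a ring map), so the associated pullback $X(E_0\oplus F_0[1])\to X(E\oplus F[1])$ sends $\cV(I_{r+1,s+1}(E_0\oplus F_0[1]))$ into $\cV(I_{r+1,s+1}(E\oplus F[1]))$. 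As in that earlier argument it therefore suffices to exhibit a single $V_0=E_0\oplus F_0[1]$ together with a point $(\omega_0,\eta_0)\in\cV(I_{r+1,s+1}(V_0))$ of rank exactly $(r,s)$, i.e.\ $\rank(\omega_0)=r$ and $\rank(\eta_0)=2s$: given any $(\omega,\eta)\in X_{r,s}(E,F)$, diagonalization of symmetric forms and the Darboux normal form for alternating forms produce $\phi\colon E\to E_0$ and $\psi\colon F\to F_0$ with $\phi^*(\omega_0)=\omega$ and $\psi^*(\eta_0)=\eta$, so $(\phi,\psi)$ pulls $(\omega_0,\eta_0)$ back to $(\omega,\eta)$, whence $(\omega,\eta)\in\cV(I_{r+1,s+1}(E,F))$.

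To produce the point, take $\dim E_0=r$ and $\dim F_0=2s$, so that $V_0$ has super dimension $(r,2s)$, and let $\omega_0\in\Sym^2(E_0)^*$ and $\eta_0\in\lw^2(F_0)^*$ be nondegenerate forms (possible since $\dim F_0$ is even); these have rank exactly $(r,s)$. I claim in fact that $I_{r+1,s+1}(V_0)=0$, so the required point comes for free. By Proposition~\ref{prop:sym2ideals} we have $I_{r+1,s+1}(V_0)=\bigoplus_{\mu\supset\rho(r+1,s+1)}\bS_{2\mu}(V_0)$; any partition $\mu$ with $\rho(r+1,s+1)\subset\mu$ has at least $r+1$ rows and satisfies $\mu_{r+1}\ge s+1$, hence $(2\mu)_{r+1}=2\mu_{r+1}\ge 2s+2>2s$, and so $\bS_{2\mu}(V_0)=0$ by Proposition~\ref{prop:schur-eval}. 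Thus $I_{r+1,s+1}(V_0)=0$, and $(\omega_0,\eta_0)$ lies in $\cV(I_{r+1,s+1}(V_0))$ vacuously; this completes the argument.

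I do not expect a serious obstacle. The two points needing (minor) care are the variance in the subfunctor claim and the elementary linear algebra furnishing $\phi$ and $\psi$, but the latter is the very step already used in the preceding proposition and the former is formal. The only real content is the Schur-functor vanishing $\bS_{2\mu}(V_0)=0$ for $\mu\supset\rho(r+1,s+1)$, and this is exactly where working over a super space — with precisely $2s$ odd coordinates and only $r$ even ones — is what makes the reduction possible; over an $r$-dimensional ordinary space $I_{r+1,s+1}$ would not vanish and there would be no such clean argument.
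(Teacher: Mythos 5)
Your argument is correct, and it takes a genuinely different route from the paper's. The paper proves this inclusion by constructing an explicit algebra homomorphism $h \colon \Sym(\Sym^2(V)) \to \Sym(\bC^{r|2s} \otimes V)$ out of the nondegenerate orthosymplectic form on $\bC^{r|2s}$, checking via a commutative square that $h^*$ surjects onto $X_{r,s}(V)$, and then using the Cauchy decomposition together with Proposition~\ref{prop:schur-eval} to see that $\bS_{\rho(r+1,s+1)}(\bC^{r|2s})=0$, so $I_{r+1,s+1}(V) \subset \ker(h)$. You instead exploit functoriality: since $I_{r+1,s+1}$ is a subfunctor of $A$, the loci $\cV(I_{r+1,s+1}(V))$ are compatible with pullback along linear maps, and by the same normal-form linear algebra already used in the classification of closed subvarieties of $X$, every point of $X_{r,s}(E,F)$ is pulled back from the nondegenerate point over $V_0=\bC^{r|2s}$; the whole content then reduces to $I_{r+1,s+1}(\bC^{r|2s})=0$, which you get from Proposition~\ref{prop:sym2ideals} plus Proposition~\ref{prop:schur-eval} (any $\mu \supset \rho(r+1,s+1)$ has $(2\mu)_{r+1} \ge 2s+2 > 2s$). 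The two proofs hinge on the same super vanishing phenomenon, but yours trades the paper's explicit orthosymplectic construction and Cauchy decomposition for a heavier use of Proposition~\ref{prop:sym2ideals} (the full decomposition of the rectangular ideal), which the paper has available anyway; the net effect is a shorter and arguably more transparent argument, while the paper's map $h$ has the mild advantage of exhibiting $X_{r,s}(V)$ concretely as the image of an affine space of orthosymplectic "parametrizing" maps. One small point of care, which your write-up handles at the same level of rigor as the paper: the containment is checked on ($\bC$-)points, which suffices for the reduced finite-dimensional varieties in question.
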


\begin{proof}
Let $\alpha$ be a non-degenerate symmetric bilinear form on $\bC^r$. We obtain a natural map
\begin{displaymath}
\Sym^2(E) \to \Sym^2(\bC^r) \otimes \Sym^2(E) \to \Sym^2(\bC^r \otimes E),
\end{displaymath}
where the first map comes from the inclusion $\bC \to \Sym^2(\bC^r)$ of the $\bO_r$ invariant provided by $\alpha$, and the second map comes from the Cauchy decomposition. The above map induces an algebra homomorphism $f \colon \Sym(\Sym^2(E)) \to \Sym(\bC^r \otimes E)$. The induced map on spectra $f^* \colon \Hom(E, \bC^r) \to \Sym^2(E)^*$ takes a linear map $\phi \colon E \to \bC^r$ to the form $\phi^*(\alpha)$ on $E$, and thus surjects onto the locus of forms of rank $\le r$.

Now let $\beta$ be a non-degenerate symplectic form on $\bC^{2s}$. A similar construction yields a homomorphism $g \colon \Sym(\lw^2(F)) \to \Sym(\bC^{2s} \otimes F)$ such that $g^*$ surjects onto the locus of forms in $\lw^2(F)^*$ of rank $\le 2s$.

Finally, let $\gamma$ be the non-degenerate orthosymplectic form on $\bC^{r|2s}$ that restricts to $\alpha$ and $\beta$ on the even and odd pieces. Once again, we get a natural algebra homomorphism $h \colon \Sym(\Sym^2(V)) \to \Sym(\bC^{r|2s} \otimes V)$. One easily verifies that the following square commutes:
\begin{displaymath}
\xymatrix{
\Sym(\Sym^2(V)) \ar[r] \ar[d]_h &
\Sym(\Sym^2(E)) \otimes \Sym(\lw^2(F)) \ar[d]^{f \otimes g} \\
\Sym(\bC^{r|2s} \otimes V) \ar[r] &
\Sym(\bC^r \otimes E) \otimes \Sym(\bC^{2s} \otimes F) }
\end{displaymath}
Here the horizontal maps are the surjection of the left ring onto the quotient by its nilradical. It follows that $h^*$ surjects onto $X_{r,s}(V)$.

Finally, observe that the multiplicity space of $\bS_{\rho(r+1,s+1)}(V)$ in the algebra $\Sym(\bC^{r|2s} \otimes V)$ is $\bS_{\rho(r+1,s+1)}(\bC^{r|2s})$ by the Cauchy decomposition, which vanishes by Proposition~\ref{prop:schur-eval}. Thus $\ker(h)$ contains $I_{r+1,s+1}(V)$, and so $\cV(I_{r+1,s+1}(V))$ contains the image of $h^*$. This proves the lemma.
\end{proof}

\subsection{The main theorem}

We now come to our main result:

\begin{theorem} \label{thm:sym2}
The $\GL$-primes of $A$ are exactly the ideals $I_{r,s}$ with $r,s \ge 1$ and the zero ideal. The $\GL$-radical ideals of $A$ are exactly the ideals $I_{\lambda}$, and the zero and unit ideal.
\end{theorem}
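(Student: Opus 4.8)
The plan is to combine the ideal-lattice description of $A$ (Proposition~\ref{prop:sym2ideals}) with the main theorems of \S\ref{s:main}, specifically Theorem~\ref{thm2}, using the geometric computations of the preceding subsections to decide irreducibility of vanishing loci. I begin with the $\GL$-radical ideals. Every $\GL$-ideal of $A$ is a sum of irreducibles $\bS_{2\mu}$, and by Proposition~\ref{prop:sym2ideals} the ideal $I_\lambda$ generated by $\bS_{2\lambda}$ equals the sum of $\bS_{2\mu}$ with $\lambda\subset\mu$. So an arbitrary $\GL$-ideal $I$ is an intersection/sum built from the $I_\lambda$; in fact writing $I=\sum_{\lambda\in T}I_\lambda$ for the set $T$ of minimal $\lambda$ with $\bS_{2\lambda}\subset I$, one sees $I$ is determined by an antichain $T$. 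The claim is that $\rad_{\GL}(I)=I_\lambda$ for a single $\lambda$, namely $\lambda$ equal to the ``intersection'' (entrywise minimum, i.e.\ the largest partition contained in every $\mu$ that occurs) — more precisely, I will show $\rad_{\GL}(I_\lambda)=I_\lambda$ and that $\rad_{\GL}(\sum_{i}I_{\lambda_i}) = I_{\lambda}$ where $\lambda=\bigcap_i\lambda_i$ (entrywise min). The inclusion $I_\lambda\subset\rad_{\GL}(I)$ is the substantive direction: it says some power of $\bS_{2\lambda}$ lands in $I$, which I will verify by a Littlewood–Richardson/Cauchy argument showing that multiplying the generators $\bS_{2\lambda_i}$ together produces $\bS_{2(\lambda_1+\cdots)}\supset\bS_{2\lambda}$-type pieces; alternatively, and more cleanly, I will deduce this from Theorem~\ref{thm1} by checking the analogous containment of ordinary radicals of the incarnations $I(\bQ^{r|s})$ using the rank stratification $X_{r,s}$. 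The reverse inclusion $\rad_{\GL}(I)\subset I_\lambda$ follows because $I\subset I_\lambda$ and $I_\lambda$ is $\GL$-radical, so it remains only to check $\rad_{\GL}(I_\lambda)=I_\lambda$.

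To see that each $I_\lambda$ is $\GL$-radical, and to identify the $\GL$-primes, I invoke Theorem~\ref{thm2} together with the geometry. By Proposition~\ref{prop:corners}, $I_\lambda=\bigcap_{(r,s)\in\cC}I_{r,s}$ where $\cC$ is the corner set of $\lambda$; and by the Proposition of \S5.3, $\cV(I_{r+1,s+1}(V))=X_{r,s}(V)$. Hence $\cV(I_\lambda(V))=\bigcup_{(r,s)\in\cC}\cV(I_{r,s}(V))$ is a union of the irreducible varieties $X_{r-1,s-1}$ over the corners; in particular $\rad(I_\lambda(\bQ^{\infty|\infty}))$ is the radical ideal cutting out $\bigcup X_{r,s}$, which by Corollary~\ref{cor:irredX} is prime exactly when $\cC$ is a single element, i.e.\ when $\lambda$ is a rectangle $\rho(r,s)$ (or empty). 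Since $A$ is finitely generated over $\bC=A_0$, condition $(\ast)$ holds, so Theorem~\ref{thm2} applies: $\rad_{\GL}(I_\lambda)$ is $\GL$-prime iff $\rad(I_\lambda(\bQ^{\infty|\infty}))$ is prime, which happens iff $\lambda$ is a rectangle. For the rectangular case I must still check $\rad_{\GL}(I_{r,s})=I_{r,s}$ (so that the $\GL$-prime it defines is $I_{r,s}$ on the nose, not a proper sub-ideal); this follows from Theorem~\ref{thm1}, since $\rad(I_{r,s}(\bQ^{\infty|\infty}))$ and $\rad(I_{r,s}(\bQ^{\infty}))$ cut out the same closed set — one checks $I_{r,s}$ is already radical in $\Rep^{\pol}(\GL_\infty)$ via the determinantal-ideal computation in Lemma~(i) of \S5.3 — and by Proposition~\ref{prop:sym2ideals} there is no $\GL$-ideal strictly between $I_{r,s}$ and its $\GL$-radical with the same ordinary radical.

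Assembling: the $\GL$-radical ideals are precisely the $I_\lambda$ (including $\lambda=\varnothing$, the unit ideal) together with the zero ideal — I must separately note that $(0)$ is $\GL$-radical and is not any $I_\lambda$, and that conversely every $\GL$-radical ideal, being $\rad_{\GL}$ of some $\sum I_{\lambda_i}$, equals one of these by the entrywise-min computation above (with the empty intersection giving $(0)$ when the index set is empty). The $\GL$-primes are the $\GL$-radical ideals whose incarnation's radical is prime: by the corner analysis these are exactly $I_{r,s}$ with $r,s\ge 1$ (rectangles with no row or column equal to $0$, so as to be proper), plus $(0)$, whose incarnation $A(\bQ^{\infty|\infty})$ has reduced quotient $\Sym(\Sym^2)$-type which is a domain — indeed $\cV((0))=X_{\infty,\infty}$ is irreducible — so $(0)$ is $\GL$-prime by Theorem~\ref{thm2}. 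I expect the main obstacle to be the bookkeeping in the first paragraph: proving cleanly that $\rad_{\GL}$ of an arbitrary sum $\sum_i I_{\lambda_i}$ is $I_{\bigcap_i\lambda_i}$, i.e.\ that a suitable power of $\bS_{2(\bigcap\lambda_i)}$ lies in the ideal generated by the $\bS_{2\lambda_i}$. The slickest route is probably not a direct Littlewood–Richardson computation but rather to run the whole classification through Theorem~\ref{thm1}: reduce every statement about $\rad_{\GL}$ to the corresponding statement about vanishing loci $\cV(\,\cdot\,(\bQ^{r|s}))$, where the rank-stratification picture $X_{r,s}$ makes all containments transparent, and only at the end translate back using Propositions~\ref{prop:sym2ideals} and~\ref{prop:corners} to name the ideals.
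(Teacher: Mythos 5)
Your overall architecture is the same as the paper's (reduce everything to vanishing loci via Theorems~\ref{thm1} and~\ref{thm2}, use Proposition~\ref{prop:sym2ideals}, Proposition~\ref{prop:corners} and Corollary~\ref{cor:irredX}, and read off primality from irreducibility of the rank strata $X_{r,s}$), but there is a genuine gap at exactly the step you flag as needing a check, namely $\rad_{\GL}(I_{r,s})=I_{r,s}$. Your justification for it is wrong on two counts. First, $I_{r,s}(\bQ^{\infty})$ is \emph{not} a radical ideal of $\Sym(\Sym^2(\bQ^{\infty}))$ once $s\ge 2$: the very computation you cite (part (i) of the lemma on vanishing loci in \S\ref{s:sym2}) says $\rad(I_{r+1,s+1}(E))=I_{r+1,1}(E)$, which strictly contains $I_{r+1,s+1}(E)$ for $s\ge 1$. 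Indeed the ordinary incarnation cannot distinguish $I_{r,s}$ from $I_{r,s'}$ at all --- this is the phenomenon of Example~\ref{ex:main} and the reason the paper passes to super vector spaces --- and the phrase ``$\rad(I_{r,s}(\bQ^{\infty|\infty}))$ and $\rad(I_{r,s}(\bQ^{\infty}))$ cut out the same closed set'' does not typecheck, since these are ideals of different rings. Second, Proposition~\ref{prop:sym2ideals} is a purely lattice-theoretic statement and does not yield ``no $\GL$-ideal strictly containing $I_{r,s}$ has the same radical''; that is precisely the geometric separation fact that has to be proved.

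What is missing is the following lemma (which the paper proves just before the theorem): if $\rho(r,s)\not\subset\lambda$ and $\dim E\ge r$, $\dim F\ge 2s$, then $\cV(I_{r,s}(V))\not\subset\cV(I_{\lambda}(V))$, because $X_{r-1,s-1}(V)$ contains points of rank exactly $(r-1,2s-2)$ while every corner $(p,q)$ of $\lambda$ has $p<r$ or $q<s$. Granting this, the argument closes as in the paper: if $\rad_{\GL}(I_{r,s})$ were strictly larger than $I_{r,s}$ it would contain some $\bS_{2\lambda}$ with $\rho(r,s)\not\subset\lambda$, hence have strictly smaller super vanishing locus, contradicting Theorem~\ref{thm1} (which forces $I_{r,s}$ and $\rad_{\GL}(I_{r,s})$ to have the same radical in $A(\bQ^{\infty|\infty})$). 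Note that the same ingredient is needed for your first-paragraph claim $\rad_{\GL}(\sum_i I_{\lambda_i})=I_{\bigcap_i\lambda_i}$: running everything through vanishing loci only determines ideals up to $\GL$-radical, so you must know the super loci separate the $I_{\lambda}$ from strictly larger $\GL$-ideals, i.e.\ that each $I_{\lambda}$ is $\GL$-radical, which reduces to the rectangular case via Proposition~\ref{prop:corners} (and Proposition~\ref{prop:rad-int} then gives the classification of $\GL$-radical ideals as intersections of $\GL$-primes, as in the paper). With that lemma supplied, the rest of your outline is sound and matches the paper's proof.
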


\begin{corollary}
Let $S = \bN^2 \cup \{\infty\}$ equipped with the partial order described as follows: $(r,s)<\infty$ for all $(r,s)$; and $(r,s) \le (r',s')$ if $r \le r'$ and $s \le s'$. Endow $S$ with the unique sober topology for which $\le$ is the generalization order on points. Then $\Spec_{\GL}(A)$ is homeomorphic to $S$.
\end{corollary}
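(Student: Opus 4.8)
The plan is to read the homeomorphism straight off of Theorem~\ref{thm:sym2}, Proposition~\ref{prop:sym2ideals}, and Proposition~\ref{prop:corners}, and then to check that the resulting topology on $S$ is indeed \emph{the} sober topology inducing the stated order. First I would set up the bijection $\Phi \colon S \to \Spec_{\GL}(A)$ by $\Phi(\infty) = (0)$ and $\Phi(r,s) = I_{r+1,s+1}$ (the shift by $(1,1)$ is forced, since the largest $\GL$-prime is $I_{1,1}$ and it must correspond to the minimum $(0,0)$ of $S$); Theorem~\ref{thm:sym2} says exactly that this is a bijection. To match the orders, recall that in any spectrum the closure of a point $\fq$ is $\cV(\fq)$, so $\fp$ lies in the closure of $\fq$ iff $\fq \subseteq \fp$, i.e. the generalization order is reverse inclusion of $\GL$-primes. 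Since $(0)$ is contained in every $\GL$-prime, $\Phi(\infty)$ is the maximum in the generalization order, matching $\infty \in S$; and by Proposition~\ref{prop:sym2ideals}, $I_{r+1,s+1} \subseteq I_{r'+1,s'+1}$ iff $\rho(r'+1,s'+1) \subseteq \rho(r+1,s+1)$ iff $r'\le r$ and $s'\le s$, so $\Phi(r,s)$ generalizes $\Phi(r',s')$ exactly when $(r,s) \le (r',s')$. Thus $\Phi$ is an isomorphism of posets from $(S,\le)$ onto $\Spec_{\GL}(A)$ with its generalization order.

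Next I would compute the closed subsets of $\Spec_{\GL}(A)$. These correspond to $\GL$-radical ideals, which by Theorem~\ref{thm:sym2} are $0$, the unit ideal, and the $I_\lambda$. The ideal $0$ gives the whole space, the unit ideal gives $\emptyset$, and $\cV(I_\lambda)$ omits $(0)$ and — by Proposition~\ref{prop:sym2ideals} — contains $I_{r+1,s+1}$ precisely when $\rho(r+1,s+1) \subseteq \lambda$, i.e. when $(r,s)$ lies weakly below one of the corners of $\lambda$ shifted by $(-1,-1)$. So $\Phi^{-1}(\cV(I_\lambda))$ is the finite down-set of $\bN^2$ generated by those shifted corners; conversely every finite antichain in $\bN^2$ is the shifted corner set of a unique partition $\lambda$, and Proposition~\ref{prop:corners} then identifies the corresponding closed set with $\cV(I_\lambda)$. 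Hence, transported along $\Phi$, the closed sets of $\Spec_{\GL}(A)$ are exactly $S$ together with all finite down-sets of $\bN^2 = S \setminus \{\infty\}$ — equivalently the finite unions of principal down-sets $\{y \in S : y \le x\}$. Call this topology $\tau_0$.

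Finally I would check that $\tau_0$ is the unique sober topology on $S$ with generalization order $\le$, which finishes the identification. Soberness is direct: the irreducible closed sets of $\tau_0$ are $S$ (generic point $\infty$) and the principal down-sets $\{y \le x\}$ with $x \in \bN^2$ (generic point $x$), each with a unique generic point; and $\overline{\{x\}}^{\tau_0} = \{y : y \le x\}$ for all $x$, so the generalization order is $\le$. For uniqueness, let $\tau$ be any sober topology on $S$ with generalization order $\le$. Every $\tau$-closed set is a down-set and $\overline{\{x\}}^\tau = \{y \le x\}$, so all finite down-sets are $\tau$-closed; and if $C$ is $\tau$-closed with $C \ne S$ then $\infty \notin C$, so $C \subseteq \bN^2$, and $C$ infinite would — since $\bN^2$ is well-quasi-ordered — contain an infinite strictly ascending chain $x_1 < x_2 < \cdots$, whose closure in $\tau$ (contained in $C$) is irreducible because a chain is up-directed, and hence has a unique generic point $g$; but $g$ must be an upper bound of all $x_i$, and the only such element of $S$ is $\infty$, forcing $\infty \in C$, a contradiction. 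So $\tau = \tau_0$, and $\Phi$ is the asserted homeomorphism.

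I expect this last step — pinning down which topology on $S$ is meant — to be the main obstacle, since a poset does not in general carry a unique sober topology realizing it; one genuinely needs the well-quasi-order properties of $\bN^2$ (no infinite antichains, and every infinite subset contains an infinite ascending chain) to exclude the extra closed sets permitted by the Alexandrov topology. Everything else is bookkeeping with the ideal lattice of $A$ already worked out in this section.
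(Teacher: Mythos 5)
Your proof is correct and follows the route the paper intends: the corollary is stated there without separate proof, as an immediate consequence of Theorem~\ref{thm:sym2} together with Propositions~\ref{prop:sym2ideals} and~\ref{prop:corners}, and your argument is exactly that deduction, with the additional (and genuinely needed) verification via the well-quasi-ordering of $\bN^2$ that a sober topology on $S$ with the stated specialization order exists and is unique --- a point the corollary's phrasing silently presupposes. The only blemish is the verbal gloss ``$\Phi(r,s)$ generalizes $\Phi(r',s')$ exactly when $(r,s)\le(r',s')$'', which reads backwards under the standard meaning of generalization, but the displayed containments and the resulting conclusion that $\Phi$ is an order isomorphism are correct.
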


Before proving the theorem, we require a lemma.

\begin{lemma}
Suppose $\dim(E) \ge r$ and $\dim(F) \ge 2s$. Let $\lambda$ be a partition such that $\rho(r,s) \not\subset \lambda$. Then $\cV(I_{r,s}(V)) \not\subset \cV(I_{\lambda}(V))$.
\end{lemma}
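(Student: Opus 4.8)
The plan is to exhibit an explicit point lying in $\cV(I_{r,s}(V))$ but not in $\cV(I_{\lambda}(V))$, using the identification $\cV(I_{a+1,b+1}(V))=X_{a,b}(V)$ proved above together with Proposition~\ref{prop:corners}. First observe that $\rho(r,s)\not\subset\lambda$ forces $r,s\ge 1$, since the empty partition is contained in every $\lambda$; hence $\cV(I_{r,s}(V))=X_{r-1,s-1}(V)$. Writing $\cC$ for the set of corners of $\lambda$, Proposition~\ref{prop:corners} gives $I_{\lambda}=\bigcap_{(p,q)\in\cC}I_{p,q}$; every corner has $p,q\ge 1$, so evaluating on $V$ and passing to vanishing loci (which turns a finite intersection of ideals into a union of their vanishing loci) yields $\cV(I_{\lambda}(V))=\bigcup_{(p,q)\in\cC}X_{p-1,q-1}(V)$.

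Next I would record the combinatorial translation of the hypothesis: $\rho(r,s)\subset\lambda$ is equivalent to $\lambda_r\ge s$, i.e.\ to the box $(r,s)$ lying in the Young diagram of $\lambda$, which (since every box of a Young diagram lies weakly northwest of some corner) is in turn equivalent to the existence of a corner $(p,q)$ of $\lambda$ with $p\ge r$ and $q\ge s$. Thus $\rho(r,s)\not\subset\lambda$ says precisely: every corner $(p,q)$ of $\lambda$ satisfies $p\le r-1$ or $q\le s-1$.

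Now the main step. Since $\dim(E)\ge r\ge r-1$ and $\dim(F)\ge 2s\ge 2(s-1)$, there is a point $x=(\omega,\eta)\in X(V)=\Sym^2(E)^*\times\lw^2(F)^*$ with $\rank(\omega)=r-1$ and $\rank(\eta)=2(s-1)$, i.e.\ a point of rank exactly $(r-1,s-1)$. By construction $x\in X_{r-1,s-1}(V)=\cV(I_{r,s}(V))$. If $x$ were also in $\cV(I_{\lambda}(V))$, then $x\in X_{p-1,q-1}(V)$ for some corner $(p,q)\in\cC$, forcing $r-1\le p-1$ and $s-1\le q-1$, i.e.\ $p\ge r$ and $q\ge s$; this contradicts the previous paragraph. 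Hence $x\notin\cV(I_{\lambda}(V))$, which proves $\cV(I_{r,s}(V))\not\subset\cV(I_{\lambda}(V))$.

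I do not foresee a genuine obstacle, since all ingredients are already in place. The only point that needs care is the bookkeeping with ranks: the test point $x$ must attain the \emph{maximal} rank $(r-1,s-1)$ allowed in $X_{r-1,s-1}(V)$ rather than a smaller one (a point of rank $(0,0)$, say, lies in every $X_{p-1,q-1}(V)$ with $p,q\ge 1$, so the argument would break), and this is exactly where the hypotheses $\dim(E)\ge r$ and $\dim(F)\ge 2s$ enter. The combinatorial equivalence between $\rho(r,s)\subset\lambda$ and the existence of a dominating corner is elementary but should be spelled out.
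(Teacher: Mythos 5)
Your proof is correct and takes essentially the same route as the paper's: decompose $\cV(I_{\lambda}(V))$ into the corner loci $X_{p-1,q-1}(V)$ via Proposition~\ref{prop:corners}, and exhibit a point with $\rank(\omega)=r-1$, $\rank(\eta)=2s-2$ that lies in $\cV(I_{r,s}(V))$ but in none of the corner pieces. You simply spell out the combinatorial equivalence and the use of the dimension hypotheses, which the paper leaves implicit.
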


\begin{proof}
By Proposition~\ref{prop:corners}, we have $\cV(I_{\lambda}(V)) = \bigcup_{(p,q) \in \cC} \cV(I_{p,q}(V))$ where $\cC$ is the set of corners of $\lambda$. For any $(p,q) \in \cC$ we have $\rho(r,s) \not\subset \rho(p,q)$, that is, $r>p$ or $s>q$. It follows that $V(I_{p,q}(V))$ does not contain any pair $(\omega, \eta)$ with $\rank(\omega)=r-1$ and $\rank(\eta)=2s-2$. However, $\cV(I_{r,s}(V))$ does contain such pairs.
\end{proof}

\begin{proof}[Proof of Theorem~\ref{thm:sym2}]
Since $\cV(I_{r,s}(V))=X_{r-1,s-1}(V)$ is irreducible for all super vector spaces $V$, we see that $\rad_{\GL}(I_{r,s})$ is $\GL$-prime by Theorem~\ref{thm2}. If $I$ is a $\GL$-ideal of $A$ that properly contains $I_{r,s}$ then it contains some $\bS_{2\lambda}$ with $\rho(r,s) \not\subset \lambda$, and then $\cV(I_{r,s}(V)) \not\subset \cV(I(V))$ for large $V$ by the lemma. Since $\cV(I_{r,s}(V))=\cV(\rad_{\GL}(I_{r,s})(V))$ for all $V$ by Theorem~\ref{thm1}, it follows that $I_{r,s}=\rad_{\GL}(I_{r,s})$. Thus $I_{r,s}$ is $\GL$-prime.

We now show that the $I_{r,s}$ account for all the non-zero $\GL$-primes of $A$. Thus let $I$ be some non-zero $\GL$-prime ideal of $A$. Then $\cV(I(V))$ is irreducible for all $V$ by Theorem~\ref{thm1}. The rule $V \mapsto \cV(I(V))$ defines an irreducible closed subvariety of $X$, and therefore coincides with $X_{r,s}$ for some $r,s \in \bN \cup \{\infty\}$ by Corollary~\ref{cor:irredX}. Since $I$ contains $I_{\rho(r',s')}$ for some $r'$ and $s'$, it follows that $r,s<\infty$. Thus $\rad(I(V))=\rad(I_{r+1,s+1}(V))$ for all $V$, and so $I=I_{r+1,s+1}$ by Theorem~\ref{thm1}, since both $I$ and $I_{r+1,s+1}$ are $\GL$-radical.

Since $I_{\lambda}$ is an intersection of rectangular ideals (Proposition~\ref{prop:corners}), it is therefore $\GL$-radical. An argument similar to the one in the proof of Proposition~\ref{prop:corners} shows that any intersection of rectangular ideals is equal to $I_{\lambda}$ for some $\lambda$, or the zero or unit ideal. Since any $\GL$-radical ideal is an intersection of $\GL$-primes, the result follows.
\end{proof}

\begin{remark}
The idea that Theorem~\ref{thm:sym2} should be true came out of joint work with Steven Sam.
\end{remark}

\begin{remark}
There is an alternate method for proving that $I_{r,s}$ is $\GL$-prime: explicitly compute the product ideal $I_{\lambda} I_{\mu}$ in $A$, for all $\lambda$ and $\mu$, and verify the primality condition directly. As far as we know, the computation of $I_{\lambda} I_{\mu}$ does not appear in the literature in this case. However, a closely related case (namely, that of $\Sym(V \otimes W)$ with $\GL(V) \times \GL(W)$ acting) is treated in \cite{whitehead}.
\end{remark}

\end{document}